\let\cal\mathcal
\def\AA{{\cal A}}
\def\BB{{\cal B}}
\def\CC{{\cal C}}
\def\DD{{\cal D}}
\def\EE{{\cal E}}
\def\FF{{\cal F}}
\def\HH{{\cal H}}
\def\NN{{\cal N}}
\def\OO{{\cal O}}
\def\SS{{\cal S}}
\def\TT{{\cal T}}
\def\UU{{\cal U}}
\def\VV{{\cal V}}
\def\WW{{\cal W}}
\def\XX{{\cal X}}
\def\YY{{\cal Y}}
\let\blb\mathbb
\def\bC{{\blb C}}
\def\bX{{\blb X}}
\def\bZ{{\blb Z}}
\def\bS{{\blb S}}
\def\bN{{\blb N}}
\def\bZ{{\blb Z}}
\let\frak\mathfrak
\def\aa{\frak{a}}
\def\bb{\frak{b}}
\def\Mod{\operatorname{Mod}}
\def\mod{\operatorname{mod}}
\def\grmod{\operatorname{grmod}}
\def\coh{\mathop{\text{\upshape{coh}}}}
\def\rep{\operatorname{rep}}
\def\Ext{\operatorname {Ext}}
\def\Hom{\operatorname {Hom}}
\def\End{\operatorname {End}}
\def\RHom{\operatorname {RHom}}
\def\im{\operatorname {im}}
\def\ker{\operatorname {ker}}
\def\End{\operatorname {End}}
\def\add{\operatorname {add}}
\def\perf{\operatorname {perf}}
\DeclareMathOperator{\Proj}{Proj}
\DeclareMathOperator{\thick}{thick}
\DeclareMathOperator{\wide}{wide}
\newcommand\Db{D^{b}}
\renewcommand\t{\tau}
\newtheorem{lemma}{Lemma}[section]
\newtheorem{proposition}[lemma]{Proposition}
\newtheorem{theorem}[lemma]{Theorem}
\newtheorem{corollary}[lemma]{Corollary}
\newcounter{MyCounter}
\theoremstyle{definition}
\newtheorem{example}[lemma]{Example}
\newtheorem{definition}[lemma]{Definition}
\newtheorem{question}[lemma]{Question}
\theoremstyle{remark}
\newtheorem{remark}[lemma]{Remark}
\newdimen\uboxsep \uboxsep=1ex
\def\uboxn#1{\vtop to 0pt{\hrule height 0pt depth 0pt\vskip\uboxsep
\hbox to 0pt{\hss #1\hss}\vss}}
\def\uboxs#1{\vbox to 0pt{\vss\hbox to 0pt{\hss #1\hss}
\vskip\uboxsep\hrule height 0pt depth 0pt}}
\def\Ob{\operatorname{Ob}}
\newcommand\exa{\nopagebreak \begin{center}\smallskip \nopagebreak               \begin{minipage}[t]{6cm}\sloppy}
\newcommand\exb{\end{minipage}\kern 1cm\begin{minipage}[t]{8cm}\sloppy}
\newcommand\exc{\end{minipage}\kern -3cm \smallskip\end{center}}
\title{Derived equivalences for hereditary Artin algebras}
\author{Donald Stanley}
\address{Donald Stanley \\ Dept. of Math. \& Stats. \\ University of Regina \\ Regina, Canada S4S 4A5}\email{donald.stanley@uregina.ca}
\author{Adam-Christiaan van Roosmalen}
\address{Adam-Christiaan van Roosmalen \\ Universiteit Hasselt \\ Campus Diepenbeek \\ Departement WNI \\ 3590 Diepenbeek \\ Belgium}
\email{adamchristiaan.vanroosmalen@uhasselt.be}
\subjclass[2010]{16E35, 16E60, 16G10; 18E30}
\begin{document}
\date{\today}

\bibliographystyle{amsplain}

\begin{abstract}
We study the role of the Serre functor in the theory of derived equivalences.  Let $\AA$ be an abelian category and let $(\UU, \VV)$ be a $t$-structure on the bounded derived category $\Db \AA$ with heart $\HH$.  We investigate when the natural embedding $\HH \to \Db \AA$ can be extended to a triangle equivalence $\Db \HH \to \Db \AA$.  Our focus of study is the case where $\AA$ is the category of finite-dimensional modules over a finite-dimensional hereditary algebra.  In this case, we prove that such an extension exists if and only if the $t$-structure is bounded and the aisle $\UU$ of the $t$-structure is closed under the Serre functor.  
\end{abstract}

\maketitle

\tableofcontents

\section{Introduction}

Let $\AA$ be an abelian category.  In the bounded derived category $\Db \AA$, we consider the full subcategory $D^{\leq 0}_\AA$ of all objects $X$ such that $H^n X = 0$ for all $n > 0$, and the full subcategory $D^{\geq 0}_\AA$ of all objects $X$ such that $H^n X = 0$ for all $n < 0$.  We can recover $\AA$ (up to equivalence) as $D^{\leq 0}_\AA \cap D^{\geq 0}_\AA$.

A pair $(\UU, \VV)$ of full subcategories of $\Db \AA$ with properties similar to the pair $(D^{\leq 0}_\AA,D^{\geq 0}_\AA)$ given above is called a $t$-structure (see \cite{BeilinsonBernsteinDeligne82} or \S \ref{subsection:t-Structure}).  The definitions are chosen so that $\UU \cap \VV$ is an abelian category, called the \emph{heart} of $(\UU, \VV)$.

We will say that the abelian categories $\AA$ and $\BB$ are \emph{derived equivalent} if there is a triangle equivalence $F: \Db \AA \to \Db \BB$.  Using $F$, one can transfer the standard $t$-structure $(D^{\leq 0}_\BB,D^{\geq 0}_\BB)$ on $\Db \BB$ across to a $t$-structure $(\UU, \VV)$ on $\Db \AA$, whose heart $\UU \cap \VV$ is equivalent to $\BB$.

However, this situation is not representative for the general situation.  Indeed,  even though it is possible, for any $t$-structure $(\UU, \VV)$ on $\Db \AA$, to extend the natural embedding $\HH \to \Db \AA$ of the heart to a triangle functor $F:\Db \HH \to \Db \AA$, there might no choice of $F$ which is an equivalence.  If there is such a choice $F: \Db \HH \to \Db \AA$, we will say that the $t$-structure $(\UU, \VV)$ \emph{induces a derived equivalence} (see Definition \ref{definition:InducedDerivedEquivalence}).  We will discuss this in \S\ref{subsection:InducingDerivedEquivalences} (based on necessary and sufficient conditions given in \cite{BeilinsonBernsteinDeligne82}).

One necessary condition for a $t$-structure $(\UU, \VV)$ to induce a derived equivalence is that the $t$-structure must be \emph{bounded}, meaning that $\cup_{n \in \bZ} \UU[n] = \Db \AA = \cup_{n \in \bZ} \VV[n]$.  Here, we write $X[n]$ for the $n$-fold suspension of $X$.

To formulate a second necessary condition, we turn our attention to Serre functors (in the sense of \cite{BondalKapranov89}).  Thus, let $\CC$ be a $k$-linear ($k$ is a field) Hom-finite category; a Serre functor is an autoequivalence $\bS: \CC \to \CC$ together with automorphisms
$$\eta_{A,B}: \Hom(A,B) \cong \Hom(B,\bS A)^*,$$
for any $A,B \in \CC$, which are natural in $A,B$ and where $(-)^*$ is the $k$-dual.  Examples of categories which admit a Serre functor are the bounded derived category $\Db \coh \bX$ of coherent sheaves on $\bX$ (where $\bX$ is a smooth projective variety), and the bounded derived category $\Db \mod \Lambda$ of finite-dimensional right $\Lambda$-modules (where $\Lambda$ is a finite-dimensional algebra of finite global dimension).

Let $\CC$ be a triangulated category with a Serre functor $\bS$ and a $t$-structure $(\UU, \VV)$ with heart $\HH = \UU \cap \VV$.  We show in Corollary \ref{corollary:Needed} that if there is a triangle equivalence $F: \Db \HH \to \CC$ which extends the natural embedding $\HH \to \CC$, then $\bS \UU \subseteq \UU$.  Thus for a $t$-structure to induces a derived equivalence, it is necessary to satisfy some compatibility condition with the Serre functor.

One can now wonder in which cases the converse holds:

\begin{question}\label{question}
Let $\CC$ be a triangulated category which admits a Serre functor $\bS: \CC \to \CC$, and let $(\UU, \VV)$ be a $t$-structure on $\CC$ with heart $\HH = \UU \cap \VV$.  For which categories $\CC$ (and possibly for which restricted class of $t$-structures $(\UU, \VV)$ on $\CC$) are the following statements equivalent:
\begin{enumerate}
  \item the embedding $\HH \to \CC$ extends to a triangle equivalence $\Db \HH \to \CC$,
  \item the $t$-structure $(\UU, \VV)$ is bounded and $\bS \UU \subseteq \UU$?
\end{enumerate}
\end{question}

A first observation is that the existence of a triangle equivalence $\Db \HH \to \CC$ implies that $\CC$ is algebraic, in the sense of \cite{Keller06}.  Thus, to have any hope of a positive answer to Question \ref{question}, one needs to restrict oneself to algebraic triangulated categories.  In \S\ref{section:Proof}, we will give three examples of $t$-structures on (algebraic) triangulated categories where the answer to Question \ref{question} is negative.  We note that the triangulated category in Example \ref{example:NotInverse} is the bounded derived category of a hereditary category.

Before mentioning a case where one knows that the answer to Question \ref{question} is positive, we will introduce some notation.  Let $\Lambda$ be a finite-dimensional algebra (over a field $k$) with finite global dimension.  We will write $\mod \Lambda$ for the category of finite-dimensional right $\Lambda$-modules.  In this case, it is well-known that $\Db \mod \Lambda$ has Serre duality; we will denote the Serre functor by $\bS$, and we will denote $n$-fold suspension by $[n]$.  An object $E \in \Db \mod \Lambda$ is called a \emph{partial silting} object if $\Hom_{\Db \mod \Lambda}(E, E[i]) = 0$ for $i > 0$.  We will say that a $t$-structure $(\UU, \VV)$ is \emph{finitely generated} if there is a partial silting object $E \in \Db \mod \Lambda$ such that $\UU$ is the smallest full subcategory of $\Db \mod \Lambda$ closed under extensions and suspensions which contains $E$.  (Note that we do not require $\UU$ to be closed under desuspensions, thus the suspension $\Db \mod \Lambda \Db \mod \Lambda$ restricts to a functor $\UU \to \UU$ which is not necessarily an autoequivalence.  Put differently, $\UU$ is a suspended subcategory (\cite{KellerVossieck87}) of $\Db \mod \Lambda$ and not necessarily a triangulated subcategory.)

It follows from \cite[Lemma 4.6]{LiuVitoriaYang14} that one has a positive answer to Question \ref{question} for finitely generated $t$-structures on $\Db \mod \Lambda$.  We give a complete proof in \S\ref{subsection:FinitelyGenerated}.

Consequently, if all $t$-structures on $\Db \mod \Lambda$ were finitely generated, the answer to Question \ref{question} would be positive.  It was shown in \cite{BroomheadPauksztelloPloog13} that the class of derived discrete algebras (introduced in \cite{Vossieck01}) satisfies this property.  We will give a short account in \S\ref{section:DerivedDiscrete}.

The main result of this paper (Theorem \ref{theorem:MainTheorem} in the text) is that Question \ref{question} has a positive answer when $\Lambda$ is hereditary without any further restrictions on the $t$-structures one considers:

\begin{theorem}\label{theorem:Introduction}
Let $\Lambda$ be a finite-dimensional hereditary algebra over a field $k$, and let $\bS$ be the Serre functor in $\Db \mod \Lambda$.  Let $(\UU, \VV)$ be a $t$-structure on $\Db \mod \Lambda$ with heart $\HH = \UU \cap \VV$.  The embedding $\HH \to \Db \mod \Lambda$ can be extended to a triangle equivalence $\Db \HH \stackrel{\sim}{\rightarrow} \Db \mod \Lambda$ if and only if $(\UU, \VV)$ is bounded and $\bS \UU \subseteq \UU$.
\end{theorem}

The proof of the theorem is given in \S\ref{section:Proof}, but the main steps of the proof are given in \S\ref{section:Criterion}, \S\ref{section:NoExtProjectives}, and \S\ref{section:Reduction}.

\subsection*{Sketch of proof of Theorem \ref{theorem:Introduction}}

Let $\UU \subseteq \Db \mod \Lambda$ be a full subcategory.  We will say that an object $E \in \UU$ is Ext-\emph{projective} in $\UU$ if $\Hom(E,U[1]) = 0$ for all $U \in \UU$.  If $(\UU, \VV)$ is a $t$-structure, then any $E \in \UU$ which is Ext-projective is also a partial silting object in $\Db \mod \Lambda$.

In the proof of Theorem \ref{theorem:Introduction}, we consider two extremal cases: in the first case, $(\UU, \VV)$ is finitely generated (\S\ref{subsection:FinitelyGenerated}), while in the second case, $\UU$ has no nonzero Ext-projectives (\S\ref{section:NoExtProjectives}).

The proof of the former case is relatively straightforward; the latter case is more involved.  In Proposition \ref{proposition:NoProjectivesInAisle}, we construct a finitely generated $t$-structure $(\XX, \YY)$ on $\Db \mod \Lambda$ which is ``close enough'' to the given $t$-structure $(\UU, \VV)$;  more specifically:
\begin{enumerate}
\item $\bS \YY \subseteq \UU \subseteq \YY$, and
\item $\YY[1] \subseteq \UU$.
\end{enumerate}
It is then shown in Corollary \ref{corollary:Criterion} that the existence of such a $t$-structure $(\XX, \YY)$ implies that $(\UU, \VV)$ induces a derived equivalence.  The construction of the $t$-structure $(\XX, \YY)$ is heavily based on the description of $t$-structures on $\Db \mod \Lambda$ given in \cite{StanleyvanRoomalen12}; we will recall the relevant results in \S\ref{subsection:Classification}.

In the more general case where $\UU$ has nonzero Ext-projectives but where $(\UU, \VV)$ is not finitely generated, one cannot hope to construct a finitely generated $t$-structure $(\XX, \YY)$ as above (a counterexample is given in Example \ref{example:NoYY}).  To handle this case, we will use perpendicular categories to reduce the problem.  One attractive possibility is to take the right perpendicular category $E^\perp$ on an Ext-projective object $E \in \UU$.  It is shown in \cite{AssemSoutoTrepode08} that the subcategory $\UU \cap E^\perp$ is again an aisle in $E^\perp$, but in general it will not be closed under the Serre functor in $E^\perp$.

We will therefore take a slightly more subtle approach in \S\ref{section:Reduction}.  We show in Proposition \ref{proposition:SimpleTop} that each indecomposable Ext-projective object $E$ in $\UU$ corresponds to an indecomposable projective in the heart of the $t$-structure, and that $E$ has a simple top $S_E$ in the heart $\HH = \UU \cap \VV$.  We show in Proposition \ref{proposition:UUaisle} that $\UU \cap {}^\perp S_E$ is an aisle in ${}^\perp S_E$ which is closed under the Serre functor in ${}^\perp S_E$.

Furthermore, one can show (see Proposition \ref{proposition:HappelRickardSchofield}) that the category ${}^\perp S_E$ is triangle equivalent to $\Db \mod \Lambda'$ for a finite-dimensional hereditary algebra $\Lambda'$.  Proposition \ref{proposition:Reduction} then shows that the $t$-structure $(\UU, \VV)$ on $\Db \mod \Lambda$ induces a derived equivalence if and only if the induced $t$-structure $(\UU \cap {}^\perp S_E, \VV \cap {}^\perp S_E)$ on ${}^\perp S_E \cong \Db \mod \Lambda'$ induces a derived equivalence.  By applying this reduction finitely often, we reduce the problem to a known case.  

\smallskip

\textbf{Acknowledgments.}  The authors wish to thank Greg Stevenson and Michel Van den Bergh for many useful discussions, and Dong Yang and Steffen K{\"o}nig for sharing an early version of \cite{KonigYang14}. The authors also thank Jorge Vitoria for many useful comments on an early draft of this paper.

The second author expresses his gratitude for the support he received from the University of Regina.  This material is based upon work supported by the National Science Foundation under Grant No. 0932078 000, while the second author was in residence at the Mathematical Science Research Institute (MSRI) in Berkeley, California, during the spring semester of 2013.  This work was partially funded by the Eduard \v{C}ech Institute under grant GA CR P201/12/G028.

The second author is currently a postdoctoral researcher at FWO.
\section{Preliminaries and notation}

Throughout, we will fix a field $k$.  We will assume that all categories, functors, algebras, and vector spaces are $k$-linear.  Furthermore, we will assume that all categories are essentially small, i.e. equivalent to a category whose objects form a set.

When $\AA$ is an abelian category, we will write $\Db \AA$ for the bounded derived category.  The suspension functor is written by $[1]$, thus the $n^\text {th}$ suspension of $X \in \Db \AA$ is written as $X[n]$.  There is a fully faithful functor $\AA \to \Db \AA$, mapping every object in $\AA$ to a complex concentrated in degree zero.  When we interpret $\AA$ as a full subcategory in this way, we will write $\AA[0]$.

We write $\Ext^n_{\Db \AA}(A,B)$ for $\Hom_{\Db \AA}(A,B[n])$.  Since $\Ext^n_{\Db \AA}(A,B) \cong \Ext^n_\AA(A,B)$, naturally in $A,B \in \AA$, we often drop the subscript and thus write $\Ext^n(A,B)$.

A category is called \emph{Hom-finite} if $\dim_k \Hom(X,Y) < \infty$, for all objects $X,Y$.  An abelian category is \emph{Ext-finite} if $\dim_k \Ext^i(X,Y) < \infty$, for all objects $X,Y$ and all $i$.  We note that an abelian Ext-finite category is also Hom-finite.

For an algebra $\Lambda$, we will write $\Mod \Lambda$ for the category of right $\Lambda$-modules.  The full subcategory of finite-dimensional right $\Lambda$-modules is denoted by $\mod \Lambda$.

A full subcategory $\CC \subseteq \DD$ is called \emph{replete} if $\CC$ is closed under isomorphisms.  If $\CC$ is a replete subcategory of $\DD$ and the embedding has a left (right) adjoint, then we will say that $\CC$ is a \emph{reflective} (\emph{coreflective}) subcategory of $\DD$.  We will denote the left adjoint of the embedding $\CC \to \DD$ by $(-)^\CC$ and the right adjoint by $(-)_\CC$.

A full subcategory $\UU$ of a triangulated category $\CC$ is called a \emph{preaisle} if $\UU$ is closed under extensions and suspensions.  A coreflective preaisle is called an \emph{aisle}.

For an additive category $\CC$ with split idempotents, we will write $\add_\CC E$ for the smallest full subcategory of $\CC$ which contains $E$ and which is closed under direct summands and finite direct sums.  When there is no confusion about the ambient category, we will write $\add E$ for $\add_\CC E$.

\subsection{Hereditary categories}

Let $\AA$ be an abelian category.  We will say that $\AA$ is \emph{hereditary} if $\Ext_\AA^2(-,-) = 0$.  We will say that a finite-dimensional algebra $\Lambda$ is \emph{hereditary} if $\Mod \Lambda$ is a hereditary category.  In this case, $\mod \Lambda$ is a hereditary category as well.

When $\AA$ is a hereditary category, there is the following description of the objects in $\Db \AA$ (see for example \cite[5.2 Lemma]{Happel88}, \cite[Theorem 3.1]{Lenzing07}): every object $X \in \Db \AA$ is isomorphic to a direct sum of its cohomologies, thus
$$X \cong \oplus_{n \in \bZ} (H^n X)[-n].$$

\subsection{Serre duality}

Let $\CC$ be a Hom-finite category.  A \emph{Serre functor} on $\CC$ (in the sense of \cite{BondalKapranov89}) is an autoequivalence $\bS: \CC \to \CC$ together with automorphisms
$$\eta_{A,B}: \Hom(A,B) \cong \Hom(B,\bS A)^*,$$
for any $A,B \in \CC$, which are natural in $A,B$ and where $(-)^*$ is the vector space dual.  If $\CC$ is a triangulated category, then $\bS$ can be given the structure of a triangle equivalence (\cite[3.3 Proposition]{BondalKapranov89}).

Let $\AA$ be an Ext-finite abelian category.  As both $\AA$ and $\Db \AA$ are Hom-finite additive categories with split idempotents (idempotents split in $\AA$ because $\AA$ is abelian; that they split in $\Db \AA$ has been shown in \cite[2.10. Corollary]{BalmerSchlichting01}), they are Krull-Schmidt categories.

We say that $\AA$ has \emph{Serre duality} if $\Db \AA$ admits a Serre functor.  It has been shown in \cite{BondalKapranov89} that the following categories have Serre duality:
\begin{itemize}
  \item the category $\mod \Lambda$ of finite-dimensional modules over a finite-dimensional algebra $\Lambda$ with finite global dimension (in this case $\bS \cong - \stackrel{L}{\otimes}_k (\Lambda^*)$), and
  \item the category $\coh \bX$ of coherent sheaves on a smooth projective variety $\bX$ (in this case $\bS \cong \omega_\bX \stackrel{L}{\otimes}_{\OO_\bX} - [n]$ where $\omega_\bX$ is the dualizing sheaf and $n$ is the dimension of $\bX$).
\end{itemize}

It has been shown in \cite[Proposition 2.8]{Chen11} that $\Db \AA$ has a Serre functor if and only if $\Db \AA$ has Auslander-Reiten triangles (see \cite[Proposition I.2.3]{ReVdB02} for the case where $k$ is algebraically closed).  Writing $\tilde{\tau}$ for $\bS[-1]$, every indecomposable object $A \in \Db \AA$ admits an Auslander-Reiten triangle $\tilde{\tau} A \to M \to A \to \bS A$.

If $\AA$ is hereditary and has Serre duality, then it follows from \cite[Theorem I.3.3]{ReVdB02} that $\AA$ has Auslander-Reiten sequences.  We will denote the Auslander-Reiten translation in $\AA$ by $\tau$, thus for every nonprojective indecomposable object $A \in \AA$ there is an Auslander-Reiten sequence $0 \to \tau A \to M \to A \to 0$.

The Auslander-Reiten translation in $\AA$ and the functor $\tilde{\tau}: \Db \AA \to \Db \AA$ coincide on nonprojective indecomposable objects of $\AA$, meaning that for every indecomposable nonprojective $A \in \AA$, we have $(\tau A)[0] \cong \tilde{\tau} (A[0])$.  We will therefore write $\tau$ for $\tilde{\tau}$.  Furthermore, we will write $\tau^-$ for $\tau^{-1}$.

We will use the following result (see \cite[Lemma 1]{Keller05}).

\begin{proposition}\label{proposition:KellerSerre}
Let $\CC$ and $\DD$ be Hom-finite categories, and assume that $\CC$ has a Serre functor $\bS_\CC$.  Let $F:\DD \to \CC$ be a fully faithful functor.  If $F$ admits a right and a left adjoint $R,L: \CC \to \DD$, then $\DD$ has a Serre functor $\bS_\DD \cong R \circ \bS_\CC \circ F$.  A quasi-inverse to $\bS_\DD$ is $\bS_\DD^{-1} \cong  L \circ \bS_\CC^{-1} \circ F$.
\end{proposition}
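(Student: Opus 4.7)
The plan is to build the Serre pairing in $\DD$ by concatenating the adjunction isomorphisms with the Serre pairing already available in $\CC$. For $A, B \in \DD$ I would write the chain
$$\Hom_\DD(A,B) \cong \Hom_\CC(FA, FB) \cong \Hom_\CC(FB, \bS_\CC FA)^{\ast} \cong \Hom_\DD(B, R\bS_\CC FA)^{\ast},$$
where the first isomorphism is full faithfulness of $F$, the second is the Serre pairing $\eta_{FA,FB}$ in $\CC$, and the third is the adjunction $F \dashv R$. Each of the three isomorphisms is natural in $A$ and $B$ (and the second is anti-linear/dual in $FA$ but linear in $FB$, so everything matches up), so the composite is a candidate natural isomorphism for $\eta^\DD_{A,B}$ with respect to the functor $\bS_\DD := R \circ \bS_\CC \circ F$.

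Once the pairing is in place, I would argue that $\bS_\DD$ is automatically an autoequivalence: any endofunctor satisfying $\Hom(-,-) \cong \Hom(-, \bS_\DD -)^\ast$ naturally is fully faithful (iterate the Serre identity to get $\Hom(A,B) \cong \Hom(\bS_\DD A, \bS_\DD B)$) and essentially surjective (for each $X \in \DD$, the functor $\Hom_\DD(X,-)^\ast$ is representable, and the representing object is $\bS_\DD^{-1} X$, so $X$ lies in the essential image of $\bS_\DD$). So no extra work is needed to promote $\bS_\DD$ from a mere functor to an autoequivalence.

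It remains to identify a quasi-inverse of $\bS_\DD$. Writing $T := L \circ \bS_\CC^{-1} \circ F$, I would compute, using the adjunction $L \dashv F$, the full faithfulness of $F$, and the Serre identity in $\CC$ (in its rewritten form $\Hom_\CC(\bS_\CC^{-1} X, Y) \cong \Hom_\CC(Y, X)^\ast$),
$$\Hom_\DD(TA, B) \cong \Hom_\CC(\bS_\CC^{-1} FA, FB) \cong \Hom_\CC(FB, FA)^{\ast} \cong \Hom_\DD(B, A)^{\ast}.$$
On the other hand, substituting $\bS_\DD^{-1} A$ for $A$ in the Serre pairing of $\bS_\DD$ also gives $\Hom_\DD(\bS_\DD^{-1} A, B) \cong \Hom_\DD(B, A)^{\ast}$ naturally. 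A Yoneda argument in each variable then forces $T \cong \bS_\DD^{-1}$.

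I do not expect a genuine obstacle here; the proof is essentially formal manipulation of adjunctions. The one place that requires care is keeping the variances straight in the Serre pairing (it is contravariant in the first slot when one passes to duals), so the adjunction $F \dashv R$ must be applied in the second slot to produce $\bS_\DD$, and symmetrically the adjunction $L \dashv F$ must be applied to produce $\bS_\DD^{-1}$; swapping them would give the wrong handedness and no well-defined functor.
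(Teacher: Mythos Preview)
Your argument is correct and is the standard one: chain full faithfulness, the Serre pairing in $\CC$, and the adjunction $F \dashv R$ to obtain the required natural isomorphism, then use Yoneda and the left adjoint $L$ to identify the inverse. The paper does not supply its own proof of this proposition; it simply cites \cite[Lemma 1]{Keller05}, and your proof is essentially the argument one finds there.
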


\subsection{Wide and thick subcategories}

Let $\AA$ be an abelian category.  Following \cite{Hovey01}, we say that a full subcategory $\WW$ of $\AA$ is \emph{wide} if $\WW$ is closed under kernels, cokernels, and extensions in $\AA$.  For any subset of objects or any subcategory $\BB$ of $\AA$, we will write $\wide_\AA \BB$ (or just $\wide \BB$ if there is no confusion) for the wide closure of $\BB$ in $\AA$.

Note that a wide subcategory is abelian and closed under retracts.  If $\AA$ is hereditary, then any wide subcategory $\WW \subseteq \AA$ is also hereditary.  Indeed, since $\WW$ is closed under extensions, we know that $\Ext^1_\AA(W,-)|_\WW \cong \Ext^1_\WW(W,-)$, for all $W \in \WW$.  This shows that $\Ext^1_\WW(W,-)$ is right exact, and thus that $\WW$ is hereditary.

Let $\CC$ be a triangulated category.  A full triangulated subcategory $\DD$ of $\CC$ which is closed under retracts is called \emph{thick} in $\CC$.  For any subset of objects or any subcategory $\DD$ of $\CC$, we write $\thick_\CC \DD$ (or just $\thick \DD$ if there is no confusion) for the thick closure of $\DD$ in $\CC$.

We will be interested in the case where $\AA$ is the category $\mod \Lambda$ of finite-dimensional modules over a finite-dimensional hereditary algebra $\Lambda$.  In this case, we have the following well-known property.

\begin{proposition}\label{proposition:Wide}
Let $\Lambda$ be a finite-dimensional hereditary algebra.  Let $\WW \subseteq \mod \Lambda$ be a wide subcategory, and let $\Db_\WW \mod \Lambda$ be the full subcategory of $\Db \mod \Lambda$ consisting of all objects $X \in \Db \mod \Lambda$ such that $H^i(X) \in \WW$ for all $i \in \bZ$.  The following are equivalent.
\begin{enumerate}
\item $\WW$ is a reflective subcategory of $\mod \Lambda$,
\item $\WW$ is a coreflective subcategory of $\mod \Lambda$,
\item $\WW \cong \mod \Gamma$ for a finite-dimensional hereditary algebra $\Gamma$,
\item $\Db_\WW \mod \Lambda$ is a reflective subcategory of $\Db \mod \Lambda$,
\item $\Db_\WW \mod \Lambda$ is a coreflective subcategory of $\Db \mod \Lambda$.
\end{enumerate}
\end{proposition}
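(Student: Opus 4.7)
The plan is to establish the equivalence by proving that (3) is equivalent to each of (1), (2), (4), and (5), with (3) serving as the pivotal structural condition from which the various adjoints can be constructed.

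For \textbf{(1) $\Rightarrow$ (3) and (2) $\Rightarrow$ (3)}, let $R : \mod \Lambda \to \WW$ be the reflection. Since $\WW$ is wide the inclusion is exact, so $R$ sends projectives to projectives. Taking the indecomposable projectives $P_1, \ldots, P_n$ of $\mod \Lambda$, the object $G := R P_1 \oplus \cdots \oplus R P_n$ is a progenerator of $\WW$: it is projective, and right exactness of $R$ combined with $RW \cong W$ for $W \in \WW$ turns any epi $P \twoheadrightarrow W$ in $\mod \Lambda$ into an epi $RP \twoheadrightarrow W$. Hom-finiteness gives that $\Gamma := \End_\WW(G)^{\mathrm{op}}$ is finite-dimensional, and $\Gamma$ is hereditary since $\WW$ is; hence $\WW \cong \mod \Gamma$. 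The implication (2) $\Rightarrow$ (3) follows by the same argument after the $k$-linear duality $D : \mod \Lambda \to \mod \Lambda^{\mathrm{op}}$, which interchanges reflective and coreflective subcategories, preserves heredity, and sends categories of the form $\mod \Gamma$ to ones of the same form.

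For \textbf{(3) $\Rightarrow$ (1), (2), (4), (5)}, fix a progenerator $G \in \WW$ with $\End_\Lambda(G)^{\mathrm{op}} \cong \Gamma$. The tensor-Hom adjunction
\[ - \otimes_\Gamma G : \mod \Gamma \rightleftarrows \mod \Lambda : \Hom_\Lambda(G, -) \]
has left adjoint factoring through the equivalence $\mod \Gamma \cong \WW$ (since $G \in \WW$ and $\WW$ is closed under cokernels), so $\Hom_\Lambda(G, -)$ supplies the right adjoint to the inclusion $\WW \hookrightarrow \mod \Lambda$, proving (2); the reflection (1) is obtained by the dual argument with an injective cogenerator, or via $k$-duality. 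Passing to derived categories, the exactness of $- \otimes_\Gamma G$ (since $G$ is $\Gamma$-projective) combined with the hereditary splitting $X \cong \bigoplus_n H^n X [-n]$ should yield an equivalence $- \Lotimes_\Gamma G : \Db \mod \Gamma \xrightarrow{\sim} \Db_\WW \mod \Lambda$ whose right adjoint is $\RHom_\Lambda(G, -)$, establishing (5). For (4), invoke Proposition~\ref{proposition:KellerSerre} to transport a Serre functor $\bS_\Gamma$ to $\Db_\WW \mod \Lambda$; the left adjoint to the inclusion is then recovered from the right adjoint by $\bS_\Gamma^{-1} \circ \RHom_\Lambda(G, -) \circ \bS_\Lambda$.

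Finally, \textbf{(4) $\Rightarrow$ (1) and (5) $\Rightarrow$ (2)} follow by restricting derived adjoints to the heart. The standard $t$-structure on $\Db \mod \Lambda$ restricts to one on $\Db_\WW \mod \Lambda$ (since $\WW$ is abelian and closed under extensions in $\mod \Lambda$), and the inclusion is $t$-exact. Thus a left adjoint $\tilde R$ is right $t$-exact, so for $M \in \mod \Lambda \subseteq \Db^{\leq 0}$ we have $\tilde R M \in \Db_\WW^{\leq 0}$, and $RM := H^0(\tilde R M)$ satisfies $\Hom_\WW(RM, W) = \Hom_{\Db \mod \Lambda}(M, W) = \Hom_\Lambda(M, W)$; the dual argument using $H^0$ of a left $t$-exact right adjoint handles (5) $\Rightarrow$ (2). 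The main obstacle will be verifying the derived equivalence $- \Lotimes_\Gamma G : \Db \mod \Gamma \xrightarrow{\sim} \Db_\WW \mod \Lambda$, where fullness, faithfulness, and essential surjectivity onto $\Db_\WW$ each rely on the projectivity of $G$ as both a $\Gamma$- and $\Lambda$-module together with the splitting theorem for bounded derived categories of hereditary abelian categories.
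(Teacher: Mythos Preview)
The paper does not give its own argument: its entire proof is the single sentence ``See \cite[Proposition 6.6 and Remark 6.7]{Krause12}.'' Your proposal therefore supplies a direct proof where the paper defers to the literature, and the outline you give is essentially correct.

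Two small comments. First, your appeal to Proposition~\ref{proposition:KellerSerre} in the step (3) $\Rightarrow$ (4) is not quite the right reference: that proposition \emph{assumes} the fully faithful functor already has both adjoints and concludes that the smaller category inherits a Serre functor. What you actually need is the converse manoeuvre: once you know $\Db_\WW \mod \Lambda \simeq \Db \mod \Gamma$ carries its own Serre functor $\bS_\Gamma$, the formula $L := \bS_\Gamma^{-1} \circ R \circ \bS_\Lambda$ (with $R$ the right adjoint you have already constructed) is directly checked to be left adjoint to the inclusion via
\[
\Hom(LY,X) \cong \Hom(X,R\bS_\Lambda Y)^* \cong \Hom(iX,\bS_\Lambda Y)^* \cong \Hom(Y,iX).
\]
This is the same circle of ideas, but the logical direction is reversed. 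Second, the ``main obstacle'' you flag---the equivalence $\Db \mod \Gamma \xrightarrow{\sim} \Db_\WW \mod \Lambda$---is in fact routine here: since both $\Gamma$ and $\Lambda$ are hereditary, every object on either side splits as a finite direct sum of shifted cohomologies, essential surjectivity is immediate, and full faithfulness reduces to $\Ext^n_\WW \cong \Ext^n_\Lambda$ on $\WW$, which holds for $n=0,1$ because $\WW$ is extension-closed and for $n \geq 2$ because both sides vanish. So you should not expect difficulty there.
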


\begin{proof}
See \cite[Proposition 6.6 and Remark 6.7]{Krause12}.
\end{proof}

\subsection{Perpendicular subcategories and twist functors}\label{subsection:Perpendicular}

Let $\CC$ be a triangulated category, and let $\SS \subseteq \Ob \CC$.  We define the following full subcategories via their objects:
\begin{eqnarray*}
\Ob \SS^\perp &=& \{C \in \CC \mid \forall n \in \bZ: \Hom(\SS,C[n]) = 0\}, \\
\Ob {}^\perp \SS &=& \{C \in \CC \mid \forall n \in \bZ: \Hom(C, \SS[n]) = 0\},
\end{eqnarray*}
and for any $n \in \bZ$:
\begin{eqnarray*}
\Ob \SS^{\perp_n} &=& \{C \in \CC \mid \Hom(\SS,C[n]) = 0\}, \\
\Ob {}^{\perp_n} \SS &=& \{C \in \CC \mid  \Hom(C, \SS[n]) = 0\}.
\end{eqnarray*}

Note that, for any $\SS \subseteq \Ob \CC$, both $\SS^\perp$ and ${}^\perp \SS$ are thick subcategories.  When $S \in \Ob \Db \AA$, we write $S^\perp$ for $\{S\}^\perp$.  We will use similar definitions for ${}^\perp S, S^{\perp_n}$, and ${}^{\perp_n} S$.  For any set $\SS$, we have ${}^\perp \SS \cong {}^\perp \thick \SS$.

Let $\AA$ be an Ext-finite abelian category of finite global dimension, thus for any $X,Y \in \Db \AA$, we have $\sum_{i \in \bZ} \dim_k \Hom(X,Y[i]) < \infty$.  Let $S \in \Db \AA$ such that $\Hom(S,S[i]) = 0$ for all $i \not= 0$, and such that $A = \Hom(S,S)$ is semi-simple, then the thick subcategory $\thick_{\Db \AA} S$ generated by $S$ is equivalent to $\Db \mod A$ (see \cite{Keller94}, see also \cite[Theorem 5.1]{vanRoosmalen06}), and the embedding $\thick S \to \Db \AA$ has a left and a right adjoint, given on objects by
\begin{align*}
C_{\thick S} &\cong \RHom(S,C) \stackrel{L}{\otimes}_A S, \\
C^{\thick S} &\cong \RHom(C,S)^* \stackrel{L}{\otimes}_A S.
\end{align*}

\begin{remark}\label{remark:Evaluation}
The functor $-  \stackrel{L}{\otimes}_A S: \Db \mod A \to \Db \AA$ is left adjoint to $\RHom(S,-): \Db \AA \to \Db \mod A$ and thus, for every $C \in \Db \AA$, the co-unit of the adjunction yields a natural map $\RHom(S,C) \stackrel{L}{\otimes}_A S \to C$, called the \emph{evaluation map}.  Since we have assumed that $A$ is semi-simple, we have $\RHom(S,C) \stackrel{L}{\otimes}_A S \cong \bigoplus_{i \in \bZ} \Hom(S,C[i]) \otimes_A S[-i]$.  When there can be no confusion, we will just write $- \stackrel{L}{\otimes} -$ for $- \stackrel{L}{\otimes}_A -$.
\end{remark}

The embedding $\thick_{\Db \AA} S \to \Db \AA$ is part of a semi-orthogonal decomposition of $\Db \AA$, and it follows from \cite[Lemma 3.1]{Bondal89} (see also \cite[Lemma 3.1]{VandenBergh00}) that the embedding ${}^\perp S \to \Db \AA$ has a right adjoint $T^*_S: \Db \AA \to {}^\perp S$ and a left adjoint $T_S: \Db \AA \to {}^\perp S$.  For every $C \in \Db \AA$, there are triangles
$$T^*_S(C) \to C \to \RHom(C,S)^* \stackrel{L}{\otimes}_A S \to T^*_S(C)[1],$$
$$T_S(C)[-1] \to \RHom(S,C) \stackrel{L}{\otimes}_A S \to C \to T_S(C).$$

\begin{remark}
The functors $T_S$ and $T_S^*$ are the twist functors considered in \cite[\S 3.1]{vanRoosmalen12} (based on \cite{Seidel01}); this explains the notation of the functors.
\end{remark}

\begin{remark}\label{remark:KernelOfTwist}
Note that $T_S(X) = 0$ if and only if $T^*_S(X) = 0$ if and only if $X \in \thick S$.
\end{remark}

\begin{remark} We will be interested in the case where $\AA$ is the category $\mod \Lambda$ of finite-dimensional modules over a finite-dimensional hereditary algebra $\Lambda$, and where $S \in \Db \mod \Lambda$ is an exceptional object.  In this setting, it has been shown in \cite[Proposition 3]{HappelRickardSchofield88} that $^{\perp} S$ is equivalent to $\Db \mod \Lambda'$ where $\Lambda'$ is a finite-dimensional hereditary algebra with one fewer distinct simple (thus if $\mod \Lambda$ has $n$ isomorphism classes of simple objects, then $\mod \Lambda'$ has $n-1$ isomorphism classes of simple objects).
\end{remark}

We will also use perpendicular subcategories in the setting of abelian categories.  Thus, let $\AA$ be any abelian category, and consider a subset $\SS \subseteq \AA$.  We define the following full subcategories via their objects:
\begin{eqnarray*}
\Ob \SS^\perp &=& \{A \in \AA \mid \forall n \in \bZ: \Ext^n(\SS,A) = 0\}, \\
\Ob {}^\perp \SS &=& \{A \in \AA \mid \forall n \in \bZ: \Ext^n(A, \SS) = 0\}.
\end{eqnarray*}
In general, the categories $\SS^\perp$ and ${}^\perp \SS$ are not wide subcategories of $\AA$ and will not be abelian.  It follows from \cite[Proposition 1.1]{GeigleLenzing91} that $\SS^\perp$ and ${}^\perp \SS$ will be wide subcategories of $\AA$ if $\AA$ is hereditary.  Note that in this case, both $\SS^\perp$ and ${}^\perp \SS$ are abelian and hereditary.

We recall the following lemma from \cite[Lemma 3.6]{ReitenVandenBergh01}.

\begin{lemma}\label{lemma:ReitenVandenBergh}
Let $\AA$ be a hereditary category.  For any wide subcategory $\WW \subseteq \AA$, we have $\Db_{\WW^\perp}(\AA) = (\WW[0])^\perp = (\Db_{\WW}(\AA))^\perp$.
\end{lemma}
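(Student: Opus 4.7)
The plan is to prove a chain of three inclusions, relying throughout on the fact that, because $\AA$ is hereditary, every $X \in \Db \AA$ splits as $X \cong \bigoplus_{n \in \bZ} H^n(X)[-n]$ and $\Hom_{\Db \AA}(A, B[j]) = 0$ unless $j \in \{0, 1\}$ for $A, B \in \AA$. I will show
\[
(\Db_\WW \AA)^\perp \subseteq (\WW[0])^\perp \subseteq \Db_{\WW^\perp}(\AA) \subseteq (\Db_\WW \AA)^\perp.
\]

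The first inclusion is immediate because $\WW[0]$ is a full subcategory of $\Db_\WW(\AA)$, so if $X$ is right orthogonal to all objects of $\Db_\WW(\AA)$, it is in particular right orthogonal to $\WW[0]$.

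For the second inclusion, suppose $X \in (\WW[0])^\perp$, so $\Hom_{\Db \AA}(W, X[n]) = 0$ for every $W \in \WW$ and every $n \in \bZ$. Using the hereditary decomposition $X \cong \bigoplus_m H^m(X)[-m]$, the vanishing becomes $\bigoplus_m \Hom_{\Db \AA}(W, H^m(X)[n-m]) = 0$ for every $n$, hence $\Hom_{\Db \AA}(W, H^m(X)[j]) = 0$ for every $m$ and every $j$. Specializing to $j = 0$ and $j = 1$ gives $\Hom_\AA(W, H^m(X)) = 0$ and $\Ext^1_\AA(W, H^m(X)) = 0$; since $\AA$ is hereditary, $\Ext^i$ vanishes for $i \geq 2$ automatically, so $H^m(X) \in \WW^\perp$ for all $m$, i.e.\ $X \in \Db_{\WW^\perp}(\AA)$.

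For the third inclusion, suppose $X \in \Db_{\WW^\perp}(\AA)$ and take any $Y \in \Db_\WW(\AA)$. Again using heredity, $Y \cong \bigoplus_\ell H^\ell(Y)[-\ell]$ and $X \cong \bigoplus_m H^m(X)[-m]$, so
\[
\Hom_{\Db \AA}(Y, X[k]) \cong \bigoplus_{\ell, m} \Hom_{\Db \AA}\bigl(H^\ell(Y), H^m(X)[k - m + \ell]\bigr).
\]
Each summand vanishes unless $k - m + \ell \in \{0, 1\}$, in which case it equals $\Hom_\AA(H^\ell(Y), H^m(X))$ or $\Ext^1_\AA(H^\ell(Y), H^m(X))$; both vanish because $H^\ell(Y) \in \WW$ and $H^m(X) \in \WW^\perp$. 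Hence $X \in (\Db_\WW \AA)^\perp$, closing the chain. The only mild subtlety is handling the direct sum decomposition of objects carefully, but this is standard for hereditary categories and was recalled explicitly in Section 2.1; there is no serious obstacle here.
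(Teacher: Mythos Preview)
Your proof is correct. The paper does not actually prove this lemma; it simply quotes it from \cite[Lemma 3.6]{ReitenVandenBergh01}, so there is no in-paper argument to compare against. What you have written is precisely the standard direct verification one would expect: exploit the hereditary splitting $X \cong \bigoplus_n H^n(X)[-n]$ to reduce all $\Hom$-computations in $\Db\AA$ to $\Hom_\AA$ and $\Ext^1_\AA$ between cohomology objects, and then read off the three inclusions. Your argument is self-contained where the paper defers to the literature, and it uses nothing beyond the facts already recalled in \S2.1.
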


\begin{remark}
In the statement of Lemma \ref{lemma:ReitenVandenBergh}, the perpendicular $\WW^\perp$ is taken in $\AA$, while the perpendicular $(\WW[0])^\perp$ is taken in $\Db \AA$.
\end{remark} 

\begin{proposition}\label{proposition:WidePerpendicular}
Let $\Lambda$ be a finite-dimensional hereditary algebra and write $\AA$ for $\mod \Lambda$.  If $\WW \subseteq \AA$ is a wide subcategory satisfying the equivalent conditions of Proposition \ref{proposition:Wide}, then ${}^\perp \WW, \WW^\perp \subseteq \AA$ also satisfy the equivalent conditions of Proposition \ref{proposition:Wide}.
\end{proposition}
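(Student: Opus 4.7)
The plan is to pass to the bounded derived category and exploit the correspondence between wide subcategories of $\mod\Lambda$ and certain thick subcategories of $\Db\mod\Lambda$ supplied by Proposition \ref{proposition:Wide} and Lemma \ref{lemma:ReitenVandenBergh}. First, since $\mod\Lambda$ is hereditary, both $\WW^\perp$ and ${}^\perp\WW$ are already wide subcategories of $\mod\Lambda$ by the Geigle--Lenzing result recalled before Lemma \ref{lemma:ReitenVandenBergh}. It therefore suffices to verify any one of the equivalent conditions of Proposition \ref{proposition:Wide} for each of them; the plan is to establish condition (4) for $\WW^\perp$ and condition (5) for ${}^\perp\WW$.

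Set $\TT = \Db_\WW \mod \Lambda$. The hypothesis on $\WW$ gives, via conditions (4) and (5) of Proposition \ref{proposition:Wide}, that $\TT$ is both reflective and coreflective in $\Db\mod\Lambda$, so the inclusion $i\colon\TT \hookrightarrow \Db\mod\Lambda$ has both a left adjoint $i^L$ and a right adjoint $i^R$. A standard semi-orthogonal decomposition argument then yields adjoints to the inclusions of the perpendicular subcategories: for any $X \in \Db\mod\Lambda$, the counit of $i^R$ sits in a triangle $i i^R X \to X \to X' \to (i i^R X)[1]$; applying $\Hom(-,T)$ for $T\in\TT$ and using the defining vanishing of $\TT^\perp$ shows that $X' \in \TT^\perp$ and exhibits $X\mapsto X'$ as a left adjoint to $\TT^\perp \hookrightarrow \Db\mod\Lambda$. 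Dually, the unit of $i^L$ produces a triangle $K \to X \to i i^L X \to K[1]$ with $K \in {}^\perp\TT$, realising $X \mapsto K$ as a right adjoint to ${}^\perp\TT \hookrightarrow \Db\mod\Lambda$. Hence $\TT^\perp$ is reflective and ${}^\perp\TT$ is coreflective in $\Db\mod\Lambda$.

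To conclude, I identify these derived perpendiculars with the derived categories of the abelian perpendiculars. By Lemma \ref{lemma:ReitenVandenBergh}, $\TT^\perp = \Db_{\WW^\perp}(\mod\Lambda)$, so $\Db_{\WW^\perp}(\mod\Lambda)$ is reflective in $\Db\mod\Lambda$: this is condition (4) of Proposition \ref{proposition:Wide} applied to $\WW^\perp$. Applying the same lemma to the opposite hereditary category (or arguing directly from the fact that every object of $\Db\mod\Lambda$ splits as $\bigoplus_n (H^nX)[-n]$ because $\mod\Lambda$ is hereditary) yields the dual identification ${}^\perp\TT = \Db_{{}^\perp\WW}(\mod\Lambda)$, so ${}^\perp\WW$ satisfies condition (5). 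I do not anticipate a serious obstacle: the argument is essentially formal once the derived-level reformulation of Proposition \ref{proposition:Wide} is in place, and the only nontrivial verification is the dual of Lemma \ref{lemma:ReitenVandenBergh}, which is handled by the hereditary splitting just mentioned.
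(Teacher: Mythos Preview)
Your approach is essentially identical to the paper's: pass to $\Db\mod\Lambda$, use that $\Db_\WW\mod\Lambda$ is (co)reflective to obtain adjoints for its perpendiculars via the standard semi-orthogonal decomposition triangle (the paper cites Bondal's Lemma 3.1 rather than spelling this out), and then invoke Lemma \ref{lemma:ReitenVandenBergh} to identify $\TT^\perp$ with $\Db_{\WW^\perp}\mod\Lambda$ and conclude via Proposition \ref{proposition:Wide}. One small slip: to show $X'\in\TT^\perp$ you must apply $\Hom(T,-)$, not $\Hom(-,T)$, and then use that the counit $i i^R X\to X$ induces an isomorphism on $\Hom(T,-)$ by the universal property of the right adjoint.
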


\begin{proof}
By assumption, the embedding $\Db_\WW \AA \to \Db \AA$ has a left adjoint and hence is part of a semi-orthogonal decomposition of $\Db \AA$.  It follows from \cite[Lemma 3.1]{Bondal89} that the embedding $(\Db_\WW \AA)^\perp \to \Db \AA$ has a right adjoint.  Lemma \ref{lemma:ReitenVandenBergh} shows that $\WW^\perp \subseteq \AA$ satisfies the equivalent conditions of Proposition \ref{proposition:Wide}.

The proof for ${}^\perp \WW$ is similar.
\end{proof}

\begin{proposition}\label{proposition:HappelRickardSchofield}
Let $\Lambda$ be a finite-dimensional hereditary algebra, and let $E \in \Db \mod \Lambda$ be an indecomposable object.  If $\Ext^1(E,E) = 0$, then ${}^\perp E \cong \Db \mod \Lambda'$ where $\Lambda'$ is a finite-dimensional hereditary algebra with one fewer distinct simple module than $\Lambda$ (thus if $\mod \Lambda$ has $n$ isomorphism classes of simple objects, then $\mod \Lambda'$ has $n-1$ isomorphism classes of simple objects).
\end{proposition}

\begin{proof}
Up to suspension, we may assume that $E \in \Db \mod \Lambda$ is isomorphic to a stalk complex concentrated in degree zero, thus $E \cong A[0]$ for some $A \in \mod \Lambda$.  It follows from \cite[Proposition 3]{HappelRickardSchofield88} that ${}^\perp A \subseteq \mod \Lambda$ is equivalent to $\mod \Lambda'$ for some finite-dimensional hereditary algebra $\Lambda'$ with one fewer distinct simple module than $\Lambda$, and it follows from Lemma \ref{lemma:ReitenVandenBergh} that ${}^\perp E \subseteq \Db \mod \Lambda$ is equivalent to $\Db_{{}^\perp A} \mod \Lambda$.  Finally, we have $\Db_{{}^\perp A} \mod \Lambda \cong \Db ({}^\perp A) \cong \Db \Lambda'$ since $\mod \Lambda$ is hereditary and thus every object in $\Db \mod \Lambda$ is isomorphic to a direct sum of its homologies.
\end{proof}

\subsection{Ext-projectives and silting subcategories}

In this subsection, we will consider a common generalization of projective objects.

\begin{definition}\label{definition:Extprojective}
Let $\AA$ be an abelian category and let $\CC$ be a full additive subcategory of $\AA$ or $\Db \AA$.  An object $E \in \CC$ is called \emph{Ext-projective} in $\CC$ or \emph{$\CC$-projective} if $\Ext^i(E,C) = 0$ for all $i > 0$ and all $C \in \CC$.  The \emph{category of $\CC$-projective objects} is the smallest full subcategory of $\CC$ containing all $\CC$-projective objects.
\end{definition}

\begin{remark} The category $\EE$ of $\CC$-projective objects is an additive category.
\end{remark}

\begin{remark} If $\UU \subseteq \Db \AA$ is closed under suspensions (thus $\UU[1] \subseteq \UU$, for example when $\UU$ is a preaisle), then the category of $\UU$-projective objects is $\UU \cap {}^{\perp_1} \UU$.
\end{remark}

\begin{definition}
Let $\EE \subseteq \Db \AA$ be a full additive subcategory.  We will say that $\EE$ is a \emph{partial silting subcategory} if $\Hom(E,F[n]) = 0$ for all $E,F \in \EE$ and all $n > 0$.  The partial silting subcategory $\EE$ is called a \emph{silting subcategory} if $\thick(\EE) = \Db \AA$.

An object $E \in \Db \AA$ is called a \emph{(partial) silting object} if the category $\add(E)$ is a (partial) silting subcategory.

Similarly, we will say that the subcategory $\EE$ is a partial tilting subcategory if $\Hom(E,F[n]) = 0$ for all $E,F \in \EE$ and all $n \not= 0$.  The partial tilting subcategory $\EE$ is called a \emph{tilting subcategory} if $\thick(\EE) = \Db \AA$.

An object $E \in \Db \AA$ is called a \emph{(partial) tilting object} if the category $\add(E)$ is a (partial) tilting subcategory.
\end{definition}

\begin{example}
For any full additive subcategory $\CC \subseteq \Db \AA$, the category of $\CC$-projectives is a partial silting subcategory, but not necessarily a partial tilting subcategory.
\end{example}

\begin{example}
Let $\Lambda$ be a finite-dimensional algebra.  Then $\Lambda[0] \in \Ob \Db \mod \Lambda$ is a partial tilting object.  It will be a tilting object if and only if $\Lambda$ has finite global dimension.  However, $\Lambda[0]$ will always be a tilting object in $K^b(\Proj \Lambda) \subseteq \Db \mod \Lambda$.
\end{example}

We will use the following property.

\begin{proposition}\label{proposition:HappelRingel}
Let $\AA$ be an Ext-finite hereditary abelian category, and let $A \in \Db \AA$ be an indecomposable object.  If $\Ext^1(A,A) = 0$, then $\End A$ is a skew field.
\end{proposition}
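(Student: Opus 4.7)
My plan is to reduce to the abelian setting and then run a Happel--Ringel style argument. Since $\AA$ is hereditary, every object of $\Db \AA$ decomposes as a direct sum of shifts of its cohomologies (as recalled earlier in the excerpt), so the indecomposable $A$ is of the form $M[n]$ for some indecomposable $M \in \AA$; both $\End$ and $\Ext^1(-,-)$ are invariant under shift, so I may assume $A = M \in \AA$. Ext-finiteness then makes $\End M$ a finite-dimensional local $k$-algebra, so $\End M / \rad$ is automatically a skew field, and it suffices to show $\rad \End M = 0$, i.e.\ that $M$ admits no nonzero nilpotent endomorphism.

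Suppose such a nilpotent exists. A suitable power produces $f \in \End M$ with $f \neq 0$ and $f^2 = 0$. I then make one further, crucial adjustment: by composing $f$ with a projection to a single indecomposable summand of $\im f$, I may arrange that $I := \im f$ is indecomposable (so $\End I$ is also local) while preserving $f \neq 0$ and $f^2 = 0$. Writing $K = \ker f$, one has $0 \neq I \subseteq K \subsetneq M$, and the short exact sequence
\[ 0 \to K \to M \to I \to 0 \]
is non-split because $M$ is indecomposable and both $K, I$ are nonzero; it therefore represents a nonzero class $\alpha \in \Ext^1(I,K)$.

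The main technical step is to play $\alpha$ off against the hypothesis $\Ext^1(M,M) = 0$. Hereditariness and the long exact sequence obtained by applying $\Hom(-,K)$ to $0 \to I \to M \to M/I \to 0$ show that the restriction map $\Ext^1(M,K) \to \Ext^1(I,K)$ is surjective, so $\alpha$ admits a lift $\tilde\alpha \in \Ext^1(M,K)$. On the other hand, the long exact sequence obtained from $\Hom(M,-)$ applied to $0 \to K \to M \to I \to 0$, combined with $\Ext^1(M,M) = 0$, forces the connecting map $\partial\colon \Hom(M,I) \to \Ext^1(M,K)$ to be surjective; hence $\tilde\alpha = \partial(h)$ for some $h \colon M \to I$. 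Unpacking $\partial$ as a pullback construction, the requirement that $\tilde\alpha$ restrict to $\alpha$ along $\iota \colon I \hookrightarrow M$ translates into $(h\iota)^*\alpha = \alpha$, i.e., $h\iota - 1_I$ lies in the annihilator $\Ann(\alpha) \subseteq \End I$ for the pullback action on $\Ext^1(I,K)$.

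The final step is the main obstacle, and it is precisely where the reduction to indecomposable $I$ pays off. Since $\alpha \neq 0$ and units of $\End I$ act invertibly on $\Ext^1(I,K)$ by pullback, $\Ann(\alpha)$ contains no units; because $\End I$ is local, the non-units coincide with $\rad \End I$, so $h\iota - 1_I \in \rad \End I$ and hence $h\iota$ is itself a unit of $\End I$. But then $(h\iota)^{-1} \circ h \colon M \to I$ is a retraction of $\iota$, realizing $I$ as a direct summand of $M$. Since $M$ is indecomposable and $0 \neq I \subsetneq M$ (the strict inclusion holds because a nonzero square-zero endomorphism cannot be surjective), this is the desired contradiction. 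Without the preliminary reduction to indecomposable $I$, the step ``$1 + \rad$ is a unit'' would fail, which is why I would carry out that reduction at the outset.
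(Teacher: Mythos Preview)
Your reduction to the abelian case is exactly what the paper does; the paper then simply cites \cite[Lemma 4.1]{HappelRingel82} (or \cite[Proposition 5.2]{Lenzing07}) and stops. You instead supply a self-contained Happel--Ringel argument, and the argument is correct: the reduction to a square-zero $f$ with indecomposable image is valid (factoring $f = \iota p$ with $p$ epi, $\iota$ mono, and replacing $f$ by $\iota e_j p$ for an idempotent $e_j$ of $\End(\im f)$ preserves $f^2 = 0$ because $p\iota = 0$); the two long exact sequences combine exactly as you describe to yield $(h\iota)^*\alpha = \alpha$; and your observation that units of the local ring $\End I$ cannot annihilate $\alpha$ forces $h\iota \in 1 + \rad \End I$, producing the forbidden splitting. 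The added preliminary step of making $I$ indecomposable is genuinely necessary for this last inference, and you were right to flag it.

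So both proofs begin identically, but yours is more informative: the paper treats the abelian statement as a black box, whereas you unpack it. Your version also makes transparent exactly where hereditariness enters (surjectivity of $\Ext^1(M,K)\to\Ext^1(I,K)$) and where $\Ext^1(M,M)=0$ enters (surjectivity of $\partial$), which the citation obscures.
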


\begin{proof}
Since $\AA$ is hereditary, we know that $A \cong \oplus_{n \in \bZ} (H^n A)[-n]$, and since $A$ is indecomposable, there is an $n \in \bZ$ such that $A \cong (H^n A)[-n]$.  Hence, there is an $n \in \bZ$ such that $A \in \AA[-n]$.  The statement then follows from \cite[Lemma 4.1]{HappelRingel82} (see also \cite[Proposition 5.2]{Lenzing07}).
\end{proof}
\section{\texorpdfstring{Torsion pairs, weight structures, and $t$-structures}{Torsion pairs, weight structures, and t-structures}}

In this section, we recall the definitions and some properties of torsion pairs, weight structures, and $t$-structures that we will use in this article.

\subsection{Torsion pairs}

Let $\CC$ be any category and let $\XX \subseteq \CC$ be a full subcategory.  We will say that a map $f: X \to C$ is a \emph{right $\XX$-approximation} of $C \in \CC$ if $X \in \XX$ and any map $X' \to C$ (with $X' \in \XX$) factors through $f$.  Put differently, the map $\Hom(X',f) : \Hom(X',X) \to \Hom(X',C)$ is surjective.  If every object $C \in \CC$ has a right $\XX$-approximation, then we say that $\XX$ is a \emph{contravariantly finite} subcategory.

Let $\YY$ be a full subcategory of $\CC$.  Dually, we say that a map $g: C \to Y$ is a \emph{left $\YY$-approximation} of $C \in \CC$ if $Y \in \YY$ and any map $C \to Y'$ (with $Y' \in \YY$) factors through $g$.  This is equivalent to saying that map $\Hom(g,Y') : \Hom(Y,Y') \to \Hom(C,Y')$ induced by $g:C \to Y$ is surjective for all $Y' \in \YY$.  If every object $C \in \CC$ has a left $\YY$-approximation, then we say that $\YY$ is a \emph{covariantly finite} subcategory.

A subcategory $\XX \subseteq \CC$ which is both covariantly and contravariantly finite is called \emph{functorially finite}.

\begin{example}
If the embedding $\XX \to \CC$ has a right (left) adjoint, then $\XX$ is contravariantly (covariantly) finite.
\end{example}

Let $\CC$ be any triangulated category, and let $(\XX, \YY)$ be a pair of full subcategories which are closed under retracts, finite direct sums, and isomorphisms.  Following \cite{IyamaYoshino08}, we will say that $(\XX, \YY)$ is a \emph{torsion pair} in $\CC$ if $\Hom(\XX, \YY) = 0$ and if for any $C \in \CC$ there is a triangle $X \to C \to Y \to X[1]$ where $X \in \XX$ and $Y \in \YY$.  It is easy to check that the map $X \to C$ is a right $\XX$-approximation of $C$ and that the map $C \to Y$ is a left $\YY$-approximation of $C$.  In particular, $\XX$ is contravariantly finite and $\YY$ is covariantly finite.

\begin{remark}
A torsion pair is completely determined by a contravariantly finite subcategory $\XX \subseteq \CC$ which is closed under retracts and extensions.  We can then recover $\YY$ as $\XX^{\perp_0}$.
\end{remark}

\begin{remark}
Both $\XX$ and $\YY$ are closed under extensions.
\end{remark}

\subsection{Weight structures}

Weight structures were introduced by Bondarko (\cite{Bondarko10}) and Pauksztello (called co-$t$-structures, see \cite{Paukszello08}).  A pair $(\XX, \YY)$ of subcategories of $\CC$ is said to be a \emph{weight structure} on $\CC$ if $(\XX, \YY[1])$ is a torsion pair and $\XX \subseteq \XX[1]$.  Note that this implies that $\YY[1] \subseteq \YY$ and so, since $\YY$ is also closed under extension, $\YY$ is a preaisle.

Let $(\XX, \YY)$ be a weight structure on $\CC$.  For any object $C \in \CC$, there is a triangle
$$X \to C \to Y \to X[1]$$
where $Y \in \YY$ and $X \in \XX[-1]$.

A weight structure $(\XX, \YY)$ is \emph{bounded} \cite{Bondarko10} if
$$\bigcup_{n \in \bZ} \XX[n] = \CC = \bigcup_{n \in \bZ} \YY[n].$$
%
%\begin{proposition}\label{proposition:Weight}
%Let $(\XX,\YY)$ be a $t$-structure.  If $\XX$ is a finitely generated aisle, then $(({}^{\perp_0}\XX)[-1] , \XX)$ is a weight structure.
%\end{proposition}
%
%\begin{proof}
%
%\end{proof}

\subsection{\texorpdfstring{$t$-Structures}{t-Structures}}\label{subsection:t-Structure}

A pair $(\UU, \VV)$ is called a \emph{$t$-structure} if $(\UU, \VV[-1])$ is a torsion pair in $\CC$ and $\UU[1] \subseteq \UU$.

We know (\cite[Proposition 2.4]{BeligiannisReiten07}, see also \cite{StanleyvanRoomalen12} based on \cite[Proposition 1.4]{Neeman10}) that the embedding $\UU \to \CC$ admits a right adjoint and that the embedding $\VV \to \CC$ admits a left adjoint.

Recall that the torsion pair $(\UU, \VV[-1])$ is completely determined by $\UU \subseteq \CC$.  Thus, the $t$-structure $(\UU, \VV)$ is determined by the aisle $\UU$ (this is the point of view taken in \cite{KellerVossieck88}).

Alternatively, a $t$-structure (\cite{BeilinsonBernsteinDeligne82}) on $\CC$ can be defined as a pair $(\UU, \VV)$ of full subcategories of $\CC$ satisfying the following conditions:
\begin{enumerate}
\item $\UU[1] \subseteq \UU$ and $\VV \subseteq \VV[1]$,
\item $\Hom(\UU[1], \VV) = 0$,
\item $\forall C \in \CC$, there is a triangle $U \to C \to V \to U[1]$ with $U \in \UU$ and $V \in \VV[-1]$.
\end{enumerate}
Furthermore, we will say the $t$-structure is \emph{bounded} if
$$\bigcup_{n \in \bZ} \UU[n] = \CC = \bigcup_{n \in \bZ} \VV[n].$$
We will say that $\UU$ is \emph{bounded} if the associated $t$-structure $(\UU, \VV)$ is bounded.

The \emph{heart} of a $t$-structure $(\UU, \VV)$ is defined to be $\UU \cap \VV$.  We will denote the heart of the $t$-structure $(\UU, \VV)$ by $\HH(\UU)$, thus explicitly:
$$\HH(\UU) = \UU \cap (\UU^{\perp_{-1}}) = \UU \cap \VV.$$
It has been shown in \cite[Th\'{e}or\`{e}me 1.3.6]{BeilinsonBernsteinDeligne82} (see also \cite[Theorem IV.4.4]{GelfandManin03}) that the heart is an abelian category.

Let $\AA$ be an abelian category.  The \emph{standard $t$-structure} $(D^{\leq 0}, D^{\geq 0})$ on $\Db \AA$ is defined as:
\begin{eqnarray*}
\Ob D^{\leq 0} = \{X \in \Db \AA \mid \mbox{$H^n X = 0$ for $n > 0$}\}, \\
\Ob D^{\geq 0} = \{X \in \Db \AA \mid \mbox{$H^n X = 0$ for $n < 0$}\}.
\end{eqnarray*}
The heart $\HH$ of the standard $t$-structure is the full subcategory consisting of $X \in \Db \AA$ such that $H^n X = 0$ for $n \not= 0$.  Note that $\AA \cong \HH$.

\begin{remark}\label{remark:HeartDeterminesStructure}
A bounded $t$-structure $(\UU, \VV)$ on $\CC$ is completely determined by its heart $\HH$.  Indeed, we can recover $\UU$ and $\VV$ as the smallest full extension-closed subcategories, closed under suspension and desuspension, respectively.

For an abelian category $\AA$, the $t$-structure induced by the full embedding $\AA \to \Db \AA$, mapping an object to the stalk complex concentrated in degree zero, is the standard $t$-structure.
\end{remark}

Associated to a $t$-structure $(\UU, \VV)$, there are the following cohomological functors:
\begin{eqnarray*}
H^0_\UU(-) &=& \left((-)_{\UU}\right)^{\VV} = \left((-)^{\VV}\right)_{\UU}, \\
H^n_\UU(-) &=& H^0_\UU(-[n]).
\end{eqnarray*}
We refer to \cite{BeilinsonBernsteinDeligne82, GelfandManin03, Keller07} for more information.

\begin{remark}
When $(\UU, \VV)$ is the standard $t$-structure on $\Db \AA$, the cohomological functors $H^n_\UU(-)$ correspond to the usual functors $H^n(-)$.
\end{remark}

We will use the following lemma.

\begin{lemma}\label{lemma:ContainedHearts}
Let $(\UU, \VV)$ and $(\UU', \VV')$ be $t$-structures on a triangulated category $\CC$.  If $(\UU', \VV')$ is bounded and $\HH(\UU') \subseteq \HH(\UU)$ as subcategories of $\CC$, then $(\UU, \VV) = (\UU', \VV')$.
\end{lemma}

\begin{proof}
Since $(\UU', \VV')$ is bounded and $\HH(\UU') \subseteq \HH(\UU)$, we have $\UU' \subseteq \UU$ and $\VV' \subseteq \VV$ (see Remark \ref{remark:HeartDeterminesStructure}).  However, since $\UU = {}^{\perp_{-1}}\VV$ and $\UU' = {}^{\perp_{-1}}(\VV')$, we can use $\VV' \subseteq \VV$ to obtain that $\UU' \supseteq \UU$.  We find that $\UU' = \UU$ and thus also that $\VV' = \VV$, as required.
\end{proof}

We will now restrict ourselves to the case $\CC = \Db \mod \Lambda$ for a finite-dimensional algebra $\Lambda$.  A preaisle $\UU \subseteq \Db \mod \Lambda$ is said to be \emph{finitely generated} if there is a partial silting object $E \in \Db \mod \Lambda$ such that $\UU$ is the smallest preaisle containing $E$.  We will say that the preaisle $\UU$ is \emph{generated by $E$}.  Note that we can recover $\add E$ as $\UU \cap {}^{\perp_1} \UU$.

\begin{remark}\label{remark:FinitelyGenerated}
Let $E$ be a silting object in $\Db \mod \Lambda$, and let $\UU \subseteq \Db \mod \Lambda$ be the preaisle generated by $E$.  Following \cite[\S 2]{AiharaIyama12} and \cite[\S 5.4]{KonigYang14}, we have the following description: an object $A \in \Db \AA$ lies in $\UU$ if and only if $\Hom(E,A[n]) = 0$, for all $n > 0$.  See also \cite[Proposition 2.23]{AiharaIyama12}.
\end{remark}

%\begin{remark}\label{remark:FinitelyGenerated}
%Let $E$ be a silting object in $\Db \mod \Lambda$, and let $\UU \subseteq \Db \mod \Lambda$ be the preaisle generated by $E$.  One description of $\UU$ has been given in \cite[Proposition 2.23]{AiharaIyama12}.  In \cite[\S 5.4]{KonigYang14}, the following description has been given: an object $A \in \Db \AA$ lies in $\UU$ if and only if $\Hom(E,A[n]) = 0$, for all $n > 0$.
%\end{remark}

%\begin{proposition}
%Let $(\UU, \VV)$ be a $t$-structure on $\CC$ and assume that $\UU$ is finitely generated by a partial silting object $E$.  Then $E$ is a silting object if and only if $(\UU, \VV)$ is bounded.
%\end{proposition}
%
%\begin{proof}
%
%\end{proof}

We will use the following connection between weight structures and $t$-structures (\cite[Proposition 2.22]{AiharaIyama12} or \cite[Theorem 4.3.2]{Bondarko10}).

\begin{proposition}\label{proposition:Weight}
Let $\Lambda$ be a finite-dimensional algebra of finite global dimension.  Let $(\UU, \VV)$ be a bounded $t$-structure on $\Db \mod \Lambda$.  If $\UU$ is a finitely generated aisle, then $(\bS^{-1} \VV, \UU)$ is a bounded weight structure on $\Db \mod \Lambda$.
\end{proposition}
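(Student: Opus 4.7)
The plan is to reduce to the standard silting-to-weight-structure construction, and then to verify via Serre duality that the resulting weight structure is exactly $(\bS^{-1}\VV, \UU)$. First, I would upgrade the partial silting generator $E$ of $\UU$ to a genuine silting object of $\Db \mod \Lambda$: since $\UU$ is the smallest preaisle containing $E$ we have $\UU \subseteq \thick(E)$, and because $\thick(E)$ is closed under shifts, boundedness of the $t$-structure forces $\Db \mod \Lambda = \bigcup_{n} \UU[n] \subseteq \thick(E)$; hence $\thick(E) = \Db \mod \Lambda$ and $E$ is silting.

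By the cited silting-to-weight-structure construction (\cite[Proposition 2.22]{AiharaIyama12} or \cite[Theorem 4.3.2]{Bondarko10}), this silting object produces a bounded weight structure $(\XX_E, \YY_E)$ on $\Db \mod \Lambda$ whose right part $\YY_E$ is precisely the preaisle generated by $E$ (which equals $\UU$ by hypothesis), and whose left part is $\XX_E = {}^{\perp_0}(\YY_E[1])$. In particular, the existence of the approximating triangles $X \to C \to Z \to X[1]$ with $X \in \XX_E$ and $Z \in \YY_E[1]$ is handed to us by the cited result, and the remaining task is to identify $\XX_E$ with $\bS^{-1}\VV$.

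For that identification, I would use Remark \ref{remark:FinitelyGenerated} to compute $\UU^{\perp_0} = \{Y : \Hom(E, Y[m]) = 0 \text{ for all } m \le 0\}$. Since $(\UU, \VV[-1])$ is a torsion pair we have $\VV = \UU^{\perp_0}[1]$, and the same calculation yields $(\UU[1])^{\perp_0} = \UU^{\perp_0}[1] = \VV$. Serre duality $\Hom(W, U) \cong \Hom(U, \bS W)^*$ for $U \in \UU[1]$ then gives
$$\XX_E = {}^{\perp_0}(\UU[1]) = \{W : \bS W \in (\UU[1])^{\perp_0}\} = \{W : \bS W \in \VV\} = \bS^{-1}\VV,$$
as required. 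Boundedness of the resulting weight structure is automatic, since $\bS$ is a triangle equivalence and the $t$-structure is bounded: $\bigcup_{n} (\bS^{-1}\VV)[n] = \bS^{-1}\bigcup_{n}\VV[n] = \Db \mod \Lambda$ and $\bigcup_{n} \UU[n] = \Db \mod \Lambda$. The only genuinely new bookkeeping is the Serre-duality identification above; the heavy lifting of producing approximating triangles for arbitrary $C$ is the content of the cited silting results, and is where the main difficulty would lie in an argument from scratch.
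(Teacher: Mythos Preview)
The paper does not give its own proof of this proposition; it simply cites \cite[Proposition 2.22]{AiharaIyama12} and \cite[Theorem 4.3.2]{Bondarko10}. Your proposal is correct and is precisely the way one unpacks those citations into the stated form: the cited results produce, from a silting object $E$, a bounded weight structure whose right half is the preaisle generated by $E$ (hence $\UU$), and your Serre-duality computation ${}^{\perp_0}(\UU[1]) = \bS^{-1}\VV$ is exactly the missing translation between the silting formulation and the formulation involving $\bS^{-1}\VV$.

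Two minor remarks. First, the appeal to Remark \ref{remark:FinitelyGenerated} is not actually needed: your identification $(\UU[1])^{\perp_0} = \UU^{\perp_0}[1] = \VV$ uses only that $(\UU,\VV[-1])$ is a torsion pair, and the Serre-duality step uses only that $\bS$ is an autoequivalence. Second, the orthogonality and shift-stability conditions for $(\bS^{-1}\VV,\UU)$ can be checked directly from Serre duality and the $t$-structure axioms without invoking the silting construction at all; the only place where finite generation (and hence the cited results) is genuinely used is in producing the approximation triangles, which you correctly isolate as the substantive input.
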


\subsection{Torsion theories}

Let $\AA$ be an abelian category.  A full additive coreflective subcategory $\TT \subseteq \AA$ is called a \emph{torsion class} if $\TT$ is closed under extensions and quotient objects.  We will say that a torsion class $\TT \subseteq \AA$ is a \emph{tilting torsion class} if every object $A \in \AA$ is a subobject of an object in $\TT$.

Dually, a full additive reflective subcategory $\FF \subseteq \AA$ is called a \emph{torsionfree class} if $\FF$ is closed under extensions and subobjects.  A torsionfree class $\FF \subseteq \AA$ is a \emph{cotilting torsionfree class} if every object $A \in \AA$ is a quotient object of an object in $\FF$.

\begin{remark}
If $\AA$ has enough injectives, then a torsion class $\TT \subseteq \AA$ is tilting if and only if $\TT$ contains the injective objects.
\end{remark}

A pair $(\TT, \FF)$ of full subcategories of $\AA$ is called a \emph{torsion theory} if $\Hom_\AA(\TT, \FF) = 0$ and for every object $A \in \AA$ there is a short exact sequence
$$0 \to T \to X \to F \to 0$$
where $T \in \TT$ and $F \in \FF$.  This implies that $\TT$ is a torsion class in $\AA$ and that $\FF$ is a torsionfree class in $\AA$.  We will say that the torsion theory is \emph{tilting} if $\TT$ is tilting, and that the torsion theory is \emph{cotilting} if $\FF$ is cotilting.

Following \cite[Proposition 2.1]{HappelReitenSmalo96} we can use a torsion theory $(\TT, \FF)$ on $\AA$ to specify a $t$-structure $(\UU, \VV)$ on $\Db \AA$:
\begin{eqnarray*}
\Ob \UU &=& \{A \in \Db \AA \mid H^{0} A \in \TT\, \text{ and } \forall n > 0: H^n A = 0 \}, \\
\Ob \VV &=& \{A \in \Db \AA \mid H^{-1} A \in \FF, \text{ and } \forall n < -1: H^n A = 0 \}.
\end{eqnarray*}

We will call the heart $\HH(\UU)$ of this $t$-structure the \emph{tilting} of $\AA$ by $(\TT, \FF)$.  It follows from \cite[Theorem I.3.3]{HappelReitenSmalo96} that $(\UU, \VV)$ induces an equivalence $\Db \HH(\UU) \to \Db \AA$ when $(\TT, \FF)$ is tilting or cotilting.

Let $\Lambda$ be a finite-dimensional algebra.   We will say that a torsion class $\TT \subseteq \mod \Lambda$ is \emph{finitely generated} when there is an object $G \in \mod \Lambda$ such that every object of $\TT$ is a quotient object of $G^{\oplus d}$, for some $d > 0$.  It follows from \cite[Corollaries VI.6.2]{AssemSimsonSkowronski06} that we can choose $G$ to be a $\TT$-projective.

%Let $\Lambda$ be a finite-dimensional algebra.   We will say that a torsion class $\TT \subseteq \mod \Lambda$ is \emph{finitely generated} when there is an object $G \in \mod \Lambda$ such that every object of $\TT$ is a quotient object of $G^{\oplus d}$, for some $d > 0$.  It follows from \cite[Corollaries VI.6.2 and VI.6.3]{AssemSimsonSkowronski06} that we can choose $G$ to be a $\TT$-projective, and thus $G$ is a partial tilting module in $\mod \Lambda$ (in the sense of \cite[Definition VI.2.1]{AssemSimsonSkowronski06}).

\subsection{\texorpdfstring{$t$-Structures for hereditary algebras}{t-Structures for hereditary algebras}} \label{subsection:Classification}

In this subsection, we will recall some results from \cite{StanleyvanRoomalen12}.  Let $\Lambda$ be a finite-dimensional hereditary algebra and let $\AA$ be the category $\mod \Lambda$ of finite-dimensional right $\Lambda$-modules.  Let $\UU \subseteq \Db \AA$ be any aisle.

By taking cohomologies, we can associate to $\UU$ a sequence $(H^{-n} \UU)_{n \in \bZ}$ of full subcategories of $\AA$ (here, the homologies are taken with respect to the standard aisle, not the aisle $\UU$).

We write $\NN(n)$ for $H^{-n} \UU \subseteq \AA$ and we write $\WW(n)$ for the wide closure of $H^{-n} \UU$ in $\AA$.  Since $\UU$ is closed under suspensions, we know that $\NN(n) \subseteq \NN(n+1)$ and thus also $\WW(n) \subseteq \WW(n+1)$, for all $n \in \bZ$.  Furthermore, it is shown in \cite{StanleyvanRoomalen12} that $\NN(n)$ is a tilting torsion class in $\WW(n)$, and in \cite[Proposition 7.3]{StanleyvanRoomalen12} that the embedding $\WW(n) \to \AA$ has a right adjoint (thus $\WW(n)$ satisfies the equivalent conditions of Proposition \ref{proposition:Wide}).

It is shown in \cite[Corollary 4.4]{StanleyvanRoomalen12} that $\WW(n-1) \subseteq \NN(n)$ and in \cite[Proposition 8.4]{StanleyvanRoomalen12} that $\NN(n) \cap {}^\perp \WW(n-1)$ is a tilting torsion class in $\WW(n) \cap {}^\perp \WW(n-1)$.  This leads to the following definition.

\begin{definition}
A \emph{refined $t$-sequence} is a pair $(\WW(-), t_\WW(-))$ where
\begin{itemize}
\item $\WW(-)$ is a poset map from $\bZ$ to the poset of wide coreflective subcategories in $\AA$, and
\item $t_\WW(-)$ is a map from $\bZ$ to the set of full replete subcategories of $\AA$ such that $t_\WW(n)$ is a tilting torsion class in $\WW(n) \cap {}^\perp \WW(n-1)$.
\end{itemize}
We denote by $\Delta(\AA)$ the set of all refined $t$-sequences of $\AA$.
\end{definition}

\begin{remark}\label{remark:FormOfPerpendiculars}
It follows from Propositions \ref{proposition:Wide} and \ref{proposition:WidePerpendicular} that $\WW(n) \cap {}^\perp \WW(n-1)$ is equivalent to $\mod \Gamma$ for a finite-dimensional hereditary algebra $\Gamma$.  
\end{remark}

Thus, given an aisle $\UU \subseteq \Db \AA$, we can associate a refined $t$-sequence by choosing $\WW(n)$ to be the wide closure of $H^{-n}(\UU)$ and by $t_\WW(n) = H^{-n}(\UU) \cap {}^\perp \WW(n-1)$.  The main theorem of \cite{StanleyvanRoomalen12} asserts that this map is a bijection.

\begin{theorem}\label{theorem:StanleyvanRoomalen}
Let $\AA$ be as above.  The above correspondence defines a bijection
$$\begin{array}{lcr}
\{\mbox{$t$-structures on $\Db \AA$}\}&
\stackrel{\sim}{\longleftrightarrow}&
\Delta(\AA).
\end{array}$$
\end{theorem}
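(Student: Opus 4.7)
The plan is to use crucially that $\AA=\mod\Lambda$ is hereditary, so every $X\in\Db\AA$ decomposes as $X\cong\bigoplus_{n\in\bZ}H^n(X)[-n]$. Consequently, any aisle $\UU$ is determined by the family of subcategories $\NN(n)=H^{-n}(\UU)\subseteq\AA$. I would prove the bijection by verifying that the forward map $\UU\mapsto(\WW(-),t_\WW(-))$ lands in $\Delta(\AA)$, constructing an explicit inverse, and checking both compositions are identities.

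For well-definedness of the forward map, monotonicity $\WW(n)\subseteq\WW(n+1)$ is immediate from $\UU[1]\subseteq\UU$ and monotonicity of the wide closure. Coreflectivity of $\WW(n)$ in $\AA$ should be extracted from the coreflector $(-)_\UU:\Db\AA\to\UU$ associated to the aisle, applied to $M[n]$ for $M\in\AA$ and then passed through $H^{-n}$. The assertion that $t_\WW(n)=\NN(n)\cap{}^\perp\WW(n-1)$ is a torsion class in $\WW(n)\cap{}^\perp\WW(n-1)$ requires closure under quotients and extensions; both reduce, via hereditarity and the decomposition of distinguished triangles in $\Db\AA$, to closure of $\UU$ under extensions and cones. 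For the tilting property I would use that the ambient wide subcategory is itself of the form $\mod\Gamma$ for a finite-dimensional hereditary $\Gamma$ (Remark \ref{remark:FormOfPerpendiculars}), and verify via Serre duality in that hereditary category that every injective object lies in $t_\WW(n)$.

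For the inverse, given a refined $t$-sequence I would first reconstruct $\NN(n)\subseteq\WW(n)$ as the preimage of $t_\WW(n)$ under the exact quotient $\WW(n)\to\WW(n)\cap{}^\perp\WW(n-1)$; equivalently, $A\in\NN(n)$ iff $A/A_{\WW(n-1)}\in t_\WW(n)$, where $A_{\WW(n-1)}\in\WW(n-1)$ is the coreflection. Then set
\[\UU=\{X\in\Db\AA\mid H^{-n}(X)\in\NN(n)\text{ for all }n\in\bZ\}.\]
Closure of $\UU$ under suspensions follows from $\NN(n)\subseteq\NN(n+1)$, and closure under extensions uses the hereditary decomposition of distinguished triangles together with the torsion-class property of each $\NN(n)$. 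Coreflectivity is the technical core here: for $X\in\Db\AA$ I would build $X_\UU$ cohomology by cohomology by taking the appropriate torsion subobject of $H^{-n}(X)$ in $\NN(n)$, and then assemble the pieces into a complex, using the hereditary splitting to promote the termwise data to a genuine object of $\Db\AA$.

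Verifying that the two compositions are mutually inverse then reduces to two recovery statements. First, an aisle $\UU$ is recovered from $\NN(-)$ by the hereditary decomposition. Second, $\NN(n)$ is recovered from the pair $(\WW(n-1),t_\WW(n))$: the containment $\WW(n-1)\subseteq\NN(n)$, together with the identification of the image of $\NN(n)$ in the Serre quotient $\WW(n)\cap{}^\perp\WW(n-1)$ with $t_\WW(n)$, pins $\NN(n)$ down uniquely. The principal obstacle throughout is establishing coreflectivity — both for $\WW(n)\subseteq\AA$ in the forward direction and for the reconstructed $\UU$ in the inverse direction — since these are global adjoint-existence properties that cannot be verified termwise on cohomologies and are ultimately what distinguishes the image of the forward map inside the set of all sequences of torsion classes.
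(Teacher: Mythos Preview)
The paper does not prove this theorem. It is quoted from the companion paper \cite{StanleyvanRoomalen12}; the sentence immediately preceding the statement reads ``The main theorem of \cite{StanleyvanRoomalen12} asserts that this map is a bijection,'' and no argument is supplied here. So there is no proof in the present paper to compare your proposal against.

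As an outline of how such a proof might go, your sketch is broadly reasonable, but two points deserve care. First, you pass freely between the perpendicular subcategory $\WW(n)\cap{}^\perp\WW(n-1)$ and a ``Serre quotient'' $\WW(n)/\WW(n-1)$; these are different objects, and while in the hereditary coreflective situation there is a recollement linking them, your reconstruction of $\NN(n)$ as ``$A/A_{\WW(n-1)}\in t_\WW(n)$'' tacitly assumes the quotient $A/A_{\WW(n-1)}$ lands in ${}^\perp\WW(n-1)$, which is not automatic. Compare with Proposition~\ref{proposition:RecoverFromSequence} in this paper, which instead characterises $\NN(n)$ via a triangle $A[0]\to X\to B\to A[1]$ with $A\in t_\WW(n)$ and $B\in\Db\WW(n-1)$. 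Second, your proposed verification of the tilting property of $t_\WW(n)$ via Serre duality presupposes that $\WW(n)\cap{}^\perp\WW(n-1)$ has Serre duality, which is true but is itself a consequence of the coreflectivity you flag as the main obstacle; you should be careful not to assume it before it is established.
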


\begin{remark}\label{remark:Construction}
There is a straightforward way to recover the aisle from the associated refined $t$-sequence: the aisle $\UU$ is the smallest preaisle in $\Db \AA$ containing the subcategories 
\begin{itemize}
\item $t_\WW(n)[n]$ for all $n \in \bZ$, and
\item $\WW(n)[n+1]$ for all $n \in \bZ$.
\end{itemize}

If $\cap_{n \in \bZ} \WW(n) = 0$, then one can describe $\UU$ as the smallest preaisle in $\Db \AA$ containing the subcategories $t_\WW(n)[n]$ for all $n \in \bZ$.
\end{remark}

The following proposition provides a convenient way to recover $H^{-n}(\UU) = \NN(n)$ from the associated refined $t$-sequence.

\begin{proposition}\label{proposition:RecoverFromSequence}
Let $\UU \subseteq \Db \AA$ be an aisle, and let $(\WW(-), t_\WW(-))$ be the associated refined $t$-sequence.  An object $X \in \AA$ lies in $\NN(n) \subseteq \AA$ if and only if there is a triangle
$$A[0] \to X \to B \to A[1]$$
where $A \in t_\WW(n) \subseteq \AA$ and $B \in \Db \WW(n-1) \subseteq \Db \AA$.
\end{proposition}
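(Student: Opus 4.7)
The plan is to establish each direction separately: closure properties of the aisle give the implication from the triangle to $X \in \NN(n)$, while a semi-orthogonal decomposition of $\Db \WW(n)$ coming from the reflective and coreflective inclusion $\WW(n-1) \subseteq \WW(n)$ handles the converse.

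For the backward direction, suppose we have a triangle $A[0] \to X \to B \to A[1]$ with $A \in t_\WW(n)$ and $B \in \Db \WW(n-1)$. The long exact cohomology sequence (for the standard $t$-structure) forces $B$ to be concentrated in degrees $-1, 0$, with $H^{-1}(B) = \ker(A \to X)$ and $H^0(B) = \coker(A \to X)$, both in $\WW(n-1)$. Since $\AA$ is hereditary, $B \cong H^{-1}(B)[1] \oplus H^0(B)$. The inclusions $t_\WW(n), \WW(n-1) \subseteq \NN(n) = H^{-n}(\UU)$ recalled in \S\ref{subsection:Classification} place $A[n]$, $H^0(B)[n]$ and $H^{-1}(B)[n+1]$ all in $\UU$; hence $B[n] \in \UU$, and the shifted triangle $A[n] \to X[n] \to B[n] \to A[n+1]$ combined with extension-closure of $\UU$ yields $X[n] \in \UU$, i.e., $X \in \NN(n)$.

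For the forward direction, let $X \in \NN(n) \subseteq \WW(n)$. One first checks that $\WW(n-1) \subseteq \WW(n)$ is wide reflective and coreflective (the adjoints in $\AA$ restrict to $\WW(n)$ since $\WW(n-1) \subseteq \WW(n)$). Applying Proposition \ref{proposition:WidePerpendicular} inside $\WW(n) \cong \mod \Gamma$, the subcategory $\VV := \WW(n) \cap {}^\perp \WW(n-1)$ is wide reflective and coreflective in $\WW(n)$. Combining this with Lemma \ref{lemma:ReitenvandenBergh} and its dual yields a semi-orthogonal decomposition $\Db \WW(n) = \langle \Db \VV, \Db \WW(n-1) \rangle$; applied to $X$ this produces a triangle $Z \to X \to R \to Z[1]$ with $Z \in \Db \VV$ and $R \in \Db \WW(n-1)$. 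I next argue $Z \in \WW(n)$ (degree zero) and that $R$ has cohomology only in degrees $-1, 0$. The inclusion $\Db \WW(n-1) \hookrightarrow \Db \WW(n)$ is $t$-exact, so its left adjoint (the reflection producing $R$) is right $t$-exact and its right adjoint (producing $Z$) is left $t$-exact, giving $H^i(R) = 0$ for $i > 0$ and $H^i(Z) = 0$ for $i < 0$. The long exact sequence identifies $H^1(Z) \cong \coker(X \to H^0(R))$; factoring $X \to H^0(R) \in \WW(n-1)$ through the reflection $X \to X^{\WW(n-1)} \in \WW(n-1)$ shows the image lies in $\WW(n-1)$, hence $H^1(Z) \in \WW(n-1)$. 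Together with $H^1(Z) \in \VV$ and the elementary fact $\VV \cap \WW(n-1) = 0$, this forces $H^1(Z) = 0$; an analogous argument gives $H^{-2}(R) = 0$. The triangle therefore reduces to a four-term exact sequence $0 \to H^{-1}(R) \to Z \to X \to H^0(R) \to 0$ in $\WW(n)$ with outer terms in $\WW(n-1)$.

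To conclude I need $Z \in \NN(n)$, which with $Z \in \VV$ gives $Z \in t_\WW(n)$. Setting $I := \im(Z \to X)$, the sequence $0 \to I \to X \to H^0(R) \to 0$ has outer terms in $\NN(n)$ (using $\WW(n-1) \subseteq \NN(n)$). Since $\NN(n)$ is a torsion class in $\WW(n)$, the torsion functor $(-)_{\NN(n)}$ is left exact as right adjoint to the inclusion $\NN(n) \hookrightarrow \WW(n)$; applied to the sequence it identifies $I$ with its own torsion subobject, so $I \in \NN(n)$. Then $Z$, as an extension of $I$ by $H^{-1}(R) \in \WW(n-1) \subseteq \NN(n)$, lies in $\NN(n)$ by extension-closure. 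Taking $A := Z$ and $B := R$ yields the desired triangle. The principal obstacle is the cohomological rigidification in the preceding paragraph: the semi-orthogonal approximation of an arbitrary object of $\WW(n)$ could a priori have cohomology in a wider range, and forcing it into degrees $\{-1, 0\}$ requires combining the $t$-exactness of the adjoints of $\Db \WW(n-1) \hookrightarrow \Db \WW(n)$ with the hereditary structure and the perpendicularity $\VV \cap \WW(n-1) = 0$.
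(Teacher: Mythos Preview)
Your overall strategy is sound and the backward direction is correct. The paper itself gives no argument here --- it simply cites \cite[Proposition~8.8(1)]{StanleyvanRoomalen12} --- so a direct proof is worthwhile. However, the forward direction has a genuine gap at the step where you conclude $H^1(Z)\in\WW(n-1)$.

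You write that factoring $X\to H^0(R)$ through the abelian reflection $X\to X^{\WW(n-1)}$ ``shows the image lies in $\WW(n-1)$''. This implicitly uses that the unit $X\to X^{\WW(n-1)}$ is an epimorphism, so that $\im(X\to H^0(R))=\im(X^{\WW(n-1)}\to H^0(R))$. That is false for reflective wide subcategories in general: take $\AA=\mod kA_2$ with indecomposable projective--injective $P$ and $\WW=\add P$; then for the simple socle $S_2\subset P$ one computes $S_2^{\WW}\cong P$ and the unit $S_2\hookrightarrow P$ is not epi. Without this, you only get that the image is contained in a subobject of $H^0(R)$ lying in $\WW(n-1)$, which says nothing about the cokernel.

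There is an easy repair that avoids the abelian reflection entirely. Since $\AA$ is hereditary and you already know $H^i(Z)=0$ for $i<0$, write $Z\cong H^0(Z)\oplus Z'$ with $Z'=\bigoplus_{i\ge 1}H^i(Z)[-i]\in\Db\VV$. For $i\ge 1$ one has $\Hom(H^i(Z)[-i],X)=\Ext^{-i}(H^i(Z),X)=0$, so the map $Z\to X$ vanishes on $Z'$ and hence $R\cong\operatorname{cone}(H^0(Z)\to X)\oplus Z'[1]$. But $Z'[1]$ is then a direct summand of $R\in\Db\WW(n-1)$, so $Z'[1]\in\Db\VV\cap\Db\WW(n-1)=0$. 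This gives $Z=H^0(Z)\in\VV$ immediately; the rest of your argument (the four-term sequence and the torsion-functor computation showing $Z\in\NN(n)$) then goes through as written. Your remark that ``an analogous argument gives $H^{-2}(R)=0$'' is unnecessary: once $H^i(Z)=0$ for $i<0$, the long exact sequence already forces $H^i(R)=0$ for $i\le -2$.
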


\begin{proof}
This follows directly from \cite[Proposition 8.8(1)]{StanleyvanRoomalen12}.
\end{proof}

We will use the following proposition.

\begin{proposition}\label{proposition:AislesInclusion}
Let $\UU, \UU' \subseteq \Db \AA$ be aisles and let $(\WW(-), t_\WW(-))$ and $(\WW'(-), t_{\WW'}(-))$ be the associated refined $t$-sequences.  If $\WW(n) = \WW'(n)$ for all $n \in \bZ$, then
\begin{itemize}
\item $\UU[1] \subseteq \UU'$, and
\item $\UU \subseteq \UU'$ if and only if $t_\WW(n) \subseteq t_{\WW'}(n)$, for all $n \in \bZ$.
\end{itemize}
\end{proposition}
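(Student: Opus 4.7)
My plan is to use Remark \ref{remark:Construction}, which presents the aisle $\UU$ as the smallest preaisle in $\Db \AA$ containing $t_\WW(n)[n]$ and $\WW(n)[n+1]$ for all $n \in \bZ$, and similarly for $\UU'$ with the primed data. Since any aisle is closed under suspension and extension, an inclusion $\VV \subseteq \UU'$ for a preaisle $\VV$ can be checked on any preaisle-generating set of $\VV$.

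For the first bullet, $\UU[1]$ is generated as a preaisle by $t_\WW(n)[n+1]$ and $\WW(n)[n+2]$ for $n \in \bZ$. Using $\WW(n) = \WW'(n)$ and $t_\WW(n) \subseteq \WW(n)$, the first family satisfies $t_\WW(n)[n+1] \subseteq \WW'(n)[n+1] \subseteq \UU'$, and applying the suspension closure of $\UU'$ gives $\WW(n)[n+2] \subseteq \UU'$, proving $\UU[1] \subseteq \UU'$. The same bookkeeping yields the ``if'' direction of the second bullet: if $t_\WW(n) \subseteq t_{\WW'}(n)$ for every $n$, then both generating families of $\UU$ are contained in $\UU'$.

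The remaining implication, which I expect to be the main obstacle, is that $\UU \subseteq \UU'$ forces $t_\WW(n) \subseteq t_{\WW'}(n)$ for every $n$. My plan is to exploit Proposition \ref{proposition:RecoverFromSequence} applied to $\UU'$. Fix $n \in \bZ$ and $X \in t_\WW(n)$; since $X[n] \in \UU \subseteq \UU'$, the object $X = H^{-n}(X[n])$ lies in $H^{-n}\UU'$, so Proposition \ref{proposition:RecoverFromSequence} provides a triangle
$$A[0] \to X \to B \to A[1]$$
with $A \in t_{\WW'}(n)$ and $B \in \Db \WW'(n-1) = \Db \WW(n-1)$. The crux is to show that $B = 0$. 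Both $X$ (via $t_\WW(n) \subseteq {}^\perp\WW(n-1)$) and $A$ (via $t_{\WW'}(n) \subseteq {}^\perp\WW'(n-1) = {}^\perp\WW(n-1)$) are right-orthogonal to every shift of every object of $\WW(n-1)$, so applying $\Hom(-,W[i])$ to the triangle for each $W \in \WW(n-1)$ and each $i \in \bZ$ sandwiches $\Hom(B, W[i])$ between zeroes and forces it to vanish. Since $\WW(n-1)$ is wide in the hereditary category $\AA$ it is itself hereditary, so $B$ splits as a direct sum of shifts of its cohomologies, each lying in $\WW(n-1)$; taking $W$ to be each such cohomology in turn forces it to vanish, whence $B = 0$ and $X \cong A \in t_{\WW'}(n)$, as required.
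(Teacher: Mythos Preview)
Your argument is correct and, for the first bullet and the ``if'' implication of the second, it is exactly the paper's (terse) proof: the paper simply invokes Remark \ref{remark:Construction}, which gives the generating sets you use.

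For the ``only if'' implication your route via Proposition \ref{proposition:RecoverFromSequence} and the orthogonality argument works, but it is more elaborate than needed. A more direct argument: if $\UU \subseteq \UU'$ then taking standard cohomology gives $\NN(n) = H^{-n}(\UU) \subseteq H^{-n}(\UU') = \NN'(n)$, and intersecting with ${}^\perp \WW(n-1) = {}^\perp \WW'(n-1)$ immediately yields
\[
t_\WW(n) = \NN(n) \cap {}^\perp \WW(n-1) \subseteq \NN'(n) \cap {}^\perp \WW'(n-1) = t_{\WW'}(n).
\]
This avoids the triangle decomposition and the splitting argument for $B$. Your approach has the virtue of being self-contained from the stated propositions, whereas the short argument unwinds the definition $t_\WW(n) = H^{-n}(\UU) \cap {}^\perp \WW(n-1)$ directly; either is fine.
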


\begin{proof}
This follows directly from Remark \ref{remark:Construction}.
\end{proof}

Given a refined $t$-sequence, one can use the following proposition to find the Ext-projectives in the corresponding aisle.

\begin{proposition}\label{proposition:FinitelyGenerated}
Let $\UU \subseteq \Db \AA$ be a bounded aisle and let $(\WW(-),t_\WW(-)) \in \Delta (\AA)$ be the associated refined $t$-sequence.  We write $\EE_n$ for the category of $t_\WW(n)$-projective objects.  The category of $\UU$-projective objects is $\oplus_{n \in \bZ} \EE_n[n]$.
\end{proposition}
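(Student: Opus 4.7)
The plan is to reduce the statement to a level-by-level computation, using that $\AA$ is hereditary. Given any $E \in \UU$, we have $E \cong \bigoplus_n H^n(E)[-n]$ in $\Db \AA$, and since $\UU$ is closed under retracts each summand lies in $\UU$; moreover $E$ is $\UU$-projective if and only if each summand is. Writing $E_n = H^{-n}(E) \in \NN(n)$, the proposition reduces to showing, for each $n \in \bZ$ and $E_n \in \NN(n)$, that $E_n[n]$ is $\UU$-projective if and only if $E_n \in \EE_n$.

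The key preliminary step is to unpack $\UU$-projectivity explicitly. Every $U \in \UU$ decomposes as $\bigoplus_m Y_m[m]$ with $Y_m \in \NN(m)$, so the condition $\Hom(E_n[n], U[i]) = 0$ for all $U \in \UU$ and $i \geq 1$ amounts to $\Ext^{m+i-n}_\AA(E_n, Y_m) = 0$ for all $m \in \bZ$, $Y_m \in \NN(m)$ and $i \geq 1$. Using that $\AA$ is hereditary (so only the exponents $0$ and $1$ matter) and the chain $\NN(k) \subseteq \NN(k+1)$, this collapses to the pair
\[
  \Hom_\AA(E_n, \NN(n-1)) = 0 \qquad\text{and}\qquad \Ext^1_\AA(E_n, \NN(n)) = 0.
\]

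For the ``if'' direction, assume $E_n \in \EE_n \subseteq t_\WW(n) \subseteq \NN(n) \cap {}^\perp \WW(n-1)$. The first vanishing follows since $\NN(n-1) \subseteq \WW(n-1)$. For the second, given $Y \in \NN(n)$, Proposition \ref{proposition:RecoverFromSequence} produces a triangle $A \to Y \to B \to A[1]$ in $\Db \AA$ with $A \in t_\WW(n)$ and $B \in \Db \WW(n-1)$; since $E_n \in {}^\perp \WW(n-1)$ in $\AA$ implies $\Hom(E_n, B[k]) = 0$ for all $k$, applying $\Hom(E_n, -)$ gives $\Ext^1(E_n, Y) \cong \Ext^1(E_n, A) = 0$, the latter because $E_n$ is Ext-projective in $t_\WW(n)$.

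For the ``only if'' direction, we must upgrade the vanishing statements against $\NN(n-1)$ and $\NN(n)$ to $E_n \in {}^\perp \WW(n-1)$ and $\Ext^1_\AA(E_n, t_\WW(n)) = 0$. The second is automatic from $t_\WW(n) \subseteq \NN(n)$. For the first, I would exploit that $\NN(n-1)$ is a tilting torsion class in the wide subcategory $\WW(n-1)$: any $W \in \WW(n-1)$ embeds in some $T \in \NN(n-1)$, giving a short exact sequence $0 \to W \to T \to T' \to 0$ in $\WW(n-1)$. Then $\Hom_\AA(E_n, W) \hookrightarrow \Hom_\AA(E_n, T) = 0$, and applying this same argument to $T' \in \WW(n-1)$ gives $\Hom_\AA(E_n, T') = 0$; the long exact sequence, together with $\Ext^1_\AA(E_n, T) = 0$ (since $\NN(n-1) \subseteq \NN(n)$), then forces $\Ext^1_\AA(E_n, W) = 0$. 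Hence $E_n \in t_\WW(n)$ and $E_n \in \EE_n$. The main technical obstacle is precisely this bridging step: $\UU$-projectivity only directly tests objects in the torsion class $\NN(n-1)$, whereas membership in $\EE_n$ involves the whole wide envelope $\WW(n-1)$, and the tilting torsion class structure from \cite{StanleyvanRoomalen12} is what transfers the required vanishing across this gap.
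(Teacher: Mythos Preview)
Your proof is correct and follows essentially the same strategy as the paper's: reduce to stalk complexes, then use the tilting torsion class property of $\NN(n-1) \subseteq \WW(n-1)$ together with the inclusion $\WW(n-1) \subseteq \NN(n)$ to pass between vanishing against $\NN(\cdot)$ and vanishing against $\WW(\cdot)$. The differences are organizational. You first collapse $\UU$-projectivity to the two conditions $\Hom_\AA(E_n,\NN(n-1)) = 0$ and $\Ext^1_\AA(E_n,\NN(n)) = 0$, whereas the paper tests directly against the generators $t_\WW(l)[l]$ of $\UU$ and splits into the cases $l<n$, $l=n$, $l>n$; your formulation is arguably cleaner. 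In the ``if'' direction you invoke Proposition~\ref{proposition:RecoverFromSequence} to decompose objects of $\NN(n)$, while the paper avoids this by working with the generators. In the ``only if'' direction your long exact sequence argument for $\Ext^1_\AA(E_n,W)=0$ works, but is slightly roundabout: since $\WW(n-1) \subseteq \NN(n)$, the vanishing $\Ext^1_\AA(E_n,\WW(n-1)) = 0$ already follows immediately from your second reduced condition $\Ext^1_\AA(E_n,\NN(n)) = 0$, which is how the paper handles it.
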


\begin{proof}
Recall that $\AA$ is the category $\mod \Lambda$ of finite-dimensional modules over a finite-dimensional hereditary algebra $\Lambda$.  Since the aisle $\UU$ is bounded, one can recover $\UU$ from the associated refined $t$-sequence $(\WW(-),t_\WW(-))$ as the smallest preaisle in $\Db \AA$ containing the subcategories $t_\WW(n)[n]$ (see Remark \ref{remark:Construction}).

We will first show that every object in $\EE_n[n]$ is $\UU$-projective.  We know that $\EE_n[n] \subset \UU$, thus we need only to show that $\Hom(\EE_n[n], X_l[l+i]) = 0$, for all $i \geq 1$ and $X_l \in t_\WW(l)$ for all $l \in \bZ$.

If $l < n$, then it follows from $\EE_n \subseteq t_\WW(n) \subseteq \WW(n) \cap {}^\perp \WW(n-1)$ and $t_\WW(l) \subseteq \WW(l) \subseteq \WW(n-1)$ that $\Hom(\EE_n[n], X_l[l+i]) = 0$.

When $l=n$, this follows from the assumption that $\EE_n$ is the category of $t_\WW(n)$-projective objects.

When $l > n$, we have $l+i-n \geq 2$ so that
$$\Hom_{\Db \AA}(\EE_n[n], X_l[l+i]) \cong \Ext_\AA^{l+i-n}(\EE_n,X_l) = 0$$
since $\AA$ is hereditary.

Next, let $E \in \UU$ be $\UU$-projective; we want to show that $\oplus_{n \in \bZ} \EE_n[n]$.  Without loss of generality, we may assume that $E$ is indecomposable.  Assume that $E$ is concentrated in degree $-n$, thus $H^{i} E = 0$ when $i \not= -n$.  In particular, $E \in \NN(n)[n]$.  Recall that $\NN(n) = H^{-n}(\UU)$.

Since $E$ is $\UU$-projective, we know that $\Hom(E,\NN(n-1)[n+i]) = 0$ for all $i \geq 0$ (this uses that $\NN(n-1)[n-1] \subseteq \UU$).  Moreover, since $\NN(n-1)$ is a tilting torsion theory in $\WW(n-1)$, we know that $\Hom(E,\WW(n-1)[n]) = 0$.

Furthermore, since $E$ is a stalk complex in degree $-n$, we know that $\Hom(E,\WW(n-1)[n+i]) = 0$ for all $i < 0$.  Since $\AA$ is hereditary, we also have $\Hom(E,\WW(n-1)[n+i]) = 0$ for all $i \geq 2$.  Since $E$ is $\UU$-projective and $\WW(n-1)[n] \subseteq \NN(n)[n] \subseteq \UU$, we also have that $\Hom(E, \WW(n-1)[n+1])=0$.

This implies that $E \in {}^\perp \WW(n-1)$.  We conclude that $E \in \NN(n) \cap {}^\perp \WW(n-1) = t_\WW(n)$.

Finally, since $E$ is $\UU$-projective and $t_\WW(n)[n] \subset \UU$, we see that $E$ must be $t_\WW(n)[n]$-projective.
\end{proof}

\begin{corollary}\label{corollary:FinitelyGeneratedCriterion}
Let $\UU \subseteq \Db \AA$ be an aisle and let $(\WW(-),t_\WW(-)) \in \Delta (\AA)$ be the associated refined $t$-sequence.  The aisle $\UU \subseteq \Db \AA$ is finitely generated and bounded if and only if
\begin{itemize}
\item $\WW(n) = 0$ for $n \ll 0$, 
\item $\WW(n) = \AA$ for $n \gg 0$, and
\item $t_\WW(n) \subseteq \WW(n) \cap {}^\perp \WW(n-1)$ is finitely generated, for every $n \in \bZ$.
\end{itemize}
\end{corollary}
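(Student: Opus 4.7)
The strategy is to deduce the corollary from Proposition \ref{proposition:FinitelyGenerated}, which identifies the Ext-projectives of a bounded aisle $\UU$ as $\bigoplus_n \EE_n[n]$ with $\EE_n$ the category of $t_\WW(n)$-projectives. Combined with Remark \ref{remark:FinitelyGenerated} (giving $\UU \cap {}^{\perp_1}\UU = \add E$ when $\UU$ is generated by $E$), this lets one convert finiteness of $\add E$ into finiteness data on the refined $t$-sequence.

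For the forward direction, assume $\UU$ is bounded and generated by a partial silting object $E$. Boundedness translates into the first two conditions on $\WW$: from $\bigcup_n \UU[n] = \Db \mod \Lambda$, each of the finitely many simple $\Lambda$-modules $S$ admits some shift $S[n_S] \in \UU$, so $S \in \NN(n_S) \subseteq \WW(n_S)$; taking $N = \max_S n_S$ and invoking closure of wide subcategories under extensions together with the existence of composition series in $\mod \Lambda$ gives $\WW(N) = \mod \Lambda$. A dual argument via $\bigcup_n \VV[n] = \Db \mod \Lambda$ yields $\WW(n) = 0$ for $n \ll 0$. For the third condition, Proposition \ref{proposition:FinitelyGenerated} identifies $\add E$ with $\bigoplus_n \EE_n[n]$, forcing each nonzero $\EE_n$ to equal $\add G_n$ for some $G_n$; standard torsion-theoretic arguments then imply that $G_n$ generates $t_\WW(n)$ as a torsion class, so $t_\WW(n)$ is finitely generated.

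Conversely, assume the three conditions. They force $t_\WW(n) = 0$ outside a finite range, since $\WW(n) = \WW(n-1)$ implies $t_\WW(n) \subseteq \WW(n) \cap {}^\perp \WW(n-1) = 0$. For each remaining $n$, invoke finite generation of $t_\WW(n)$ together with \cite[Corollaries VI.6.2 and VI.6.3]{AssemSimsonSkowronski06} to obtain a partial tilting $t_\WW(n)$-projective generator $G_n$, and set $E := \bigoplus_n G_n[n]$. By Remark \ref{remark:Construction}, each $G_n[n] \in t_\WW(n)[n] \subseteq \UU$, so $E \in \UU$, and Proposition \ref{proposition:FinitelyGenerated} identifies $E$ as $\UU$-projective and hence as a partial silting object. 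The preaisle $\UU_E$ generated by $E$ sits inside $\UU$ automatically. For the reverse inclusion, I will apply Proposition \ref{proposition:FinitelyGenerated} to $\UU_E$ to read off its refined $t$-sequence from the data $\add E = \bigoplus_n \EE_n[n]$, and then invoke the bijection of Theorem \ref{theorem:StanleyvanRoomalen} to conclude $\UU_E = \UU$; boundedness of $\UU$ itself then follows from the two conditions on $\WW$ together with Remark \ref{remark:Construction}.

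The main obstacle I anticipate is matching the refined $t$-sequences of $\UU_E$ and $\UU$: applying Proposition \ref{proposition:FinitelyGenerated} to $\UU_E$ presupposes its boundedness, and the Ext-projectives do not directly determine the wide subcategories $\WW_E(n)$. The cleanest route is induction on $n$, using Proposition \ref{proposition:RecoverFromSequence} to describe $\NN(n)$ as extensions of $t_\WW(n)$ by objects of $\Db \WW(n-1)$; combined with the preaisle closure of $\UU_E$ and the fact that $G_n$ generates $t_\WW(n)$ as a torsion class, one inductively establishes $t_\WW(n)[n] \subseteq \UU_E$ and $\WW(n)[n+1] \subseteq \UU_E$, so that Remark \ref{remark:Construction} yields $\UU \subseteq \UU_E$.
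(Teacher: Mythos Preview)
Your plan follows the same route as the paper: both reduce everything to Proposition~\ref{proposition:FinitelyGenerated}. The paper's own proof is in fact only two sentences---it asserts that the first two bullet points encode boundedness and that the third is equivalent to finite generation ``according to'' that proposition---so your write-up already supplies far more detail than the paper does, and your treatment of boundedness and of the converse direction (building $E=\bigoplus_n G_n[n]$ and showing $\UU_E=\UU$ via Remark~\ref{remark:Construction} and Proposition~\ref{proposition:RecoverFromSequence}) is sound in outline.

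There is, however, a real gap in your forward direction. From $\add E=\bigoplus_n\EE_n[n]$ you correctly extract $\EE_n=\add G_n$, but this alone does \emph{not} force $t_\WW(n)$ to be finitely generated: a tilting torsion class can have $\EE_n=\add 0$ and still be nonzero and non--finitely-generated (the preinjective torsion class over a tame hereditary algebra is the standard example). So ``standard torsion-theoretic arguments'' cannot bridge from finiteness of $\EE_n$ to finite generation of $t_\WW(n)$. What you have not yet used at that point is that $E$ actually \emph{generates} $\UU$. One way to close the gap is to feed this back through your converse machinery: with $G_n$ the degree-$n$ component of the given generator $E$, show (by the same induction you propose, using Remark~\ref{remark:Construction} and Proposition~\ref{proposition:RecoverFromSequence}) that the preaisle generated by $\{G_m[m]\}$ already contains each $t_\WW(n)[n]$; comparing with $\UU_E=\UU$ and the bijection of Theorem~\ref{theorem:StanleyvanRoomalen} then forces $t_\WW(n)$ to coincide with the torsion class generated by $G_n$, hence to be finitely generated. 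Be aware that the induction must be arranged carefully: establishing $t_\WW(n)[n]\subseteq\UU_E$ and $\WW(n)[n+1]\subseteq\UU_E$ are intertwined, and you will likely need to first secure $(\WW(n)\cap{}^\perp\WW(n-1))[n+1]\subseteq\UU_E$ (using that $t_\WW(n)$ is tilting) before deducing $t_\WW(n)[n]\subseteq\UU_E$.
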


\begin{proof}
Recall that $\AA$ is the category $\mod \Lambda$ of finite-dimensional modules over a finite-dimensional hereditary algebra $\Lambda$.  The first two conditions are equivalent to saying that $\UU$ is bounded.  According to Proposition \ref{proposition:FinitelyGenerated}, the last condition is equivalent to saying that $\UU$ is finitely generated.
\end{proof}
\section{\texorpdfstring{$t$-Structures and derived equivalences}{t-Structures and derived equivalences}}

In this section, we recall some results from \cite{BeilinsonBernsteinDeligne82} about derived equivalences. 

\subsection{\texorpdfstring{$t$-Structures inducing derived equivalences}{t-Structures inducing derived equivalences}}\label{subsection:InducingDerivedEquivalences}

Let $\AA$ and $\BB$ be abelian categories.  We say that $\AA$ and $\BB$ are \emph{derived equivalent} if there is a triangle equivalence $\Db \BB \to \Db \AA$.  In this case, the standard $t$-structure $(D^{\leq 0}, D^{\geq 0})$ on $\Db \BB$ induces a $t$-structure $(\UU, \VV)$ on $\Db \AA$ with heart $\HH(\UU) \cong \BB$.  We capture this situation in the following definition.

\begin{definition}\label{definition:InducedDerivedEquivalence}
Let $\AA$ be an abelian category, and let $(\UU, \VV)$ be a $t$-structure on $\Db \AA$ with heart $\HH(\UU)$.  We will say that the $t$-structure $(\UU, \VV)$ (or just the aisle $\UU$) \emph{induces the derived equivalence} $\Db \HH(\UU) \cong \Db \AA$ if there exists a triangle equivalence $\Db \HH(\UU) \to \Db \AA$ which extends the natural embedding $\HH(\UU) \to \Db \AA$.
\end{definition}

\begin{remark}
The statement that a $t$-structure $(\UU, \VV)$ on $\Db \AA$ induces a derived equivalence is stronger than the statement that the heart $\HH(\UU)$ is derived equivalent to $\AA$.  Indeed, the former case requires the existence of a triangle equivalence $G: \Db \HH(\UU) \to \Db \AA$, while in the latter case we additionally require this functor to extend the natural embedding $\HH(\UU) \to \Db \AA$, meaning that the following diagram commutes up to natural equivalence:
$$\xymatrix{
\HH(\UU) \ar[r] \ar[d] & \Db \AA \\
\Db \HH(\UU) \ar@{-->}[ru]_{G}}$$
Example \ref{example:NoninducedDerivedEquivalence} below illustrates this difference.
\end{remark}

A triangle equivalence $\Db \HH(\UU) \to \Db \AA$ which extends the natural embedding $\HH(\UU) \to \Db \AA$, relates the standard $t$-structure on $\Db \HH(\UU)$ with the $t$-structure $(\UU, \VV)$ on $\Db \AA$, as in the following proposition. 

\begin{proposition}\label{proposition:ContainedHearts}
Let $\AA$ and $\BB$ be abelian categories and let $G: \Db \BB \to \Db \AA$ be any triangle equivalence.  Consider a $t$-structure $(\UU, \VV)$ on $\Db \AA$, and write $(\UU',\VV')$ for the $t$-structure on $\Db \AA$ induced from the standard $t$-structure $(D^{\leq 0}_\BB, D^{\geq 0}_\BB)$ on $\Db \BB$ by $G$, thus $\UU'$ and $\VV'$ are the essential images of $D^{\leq 0}_\BB$ and $D^{\geq 0}_\BB$ under $G$, respectively.

If $G(\BB[0]) \subseteq \HH(\UU)$, then $\UU = \UU'$ and $\VV = \VV'$.  
\end{proposition}

\begin{proof}
Directly from Lemma \ref{lemma:ContainedHearts}.
\end{proof}

Given an abelian category $\AA$ and a bounded $t$-structure $(\UU, \VV)$ on $\Db \AA$ with heart $\HH(\UU)$, we may extend the natural embedding $\HH(\UU) \to \Db \AA$ to a triangle functor $F: \Db \HH(\UU) \to \Db \AA$, called the \emph{realization functor} (see \cite[\S3.1]{BeilinsonBernsteinDeligne82}, \cite[3.2 Th\'{e}or\`{e}me]{KellerVossieck87}, or \cite[Appendix]{Beilinson89}).

\begin{remark}\label{remark:ExtInHeart}
In general, the realization functor $F$ is neither full nor faithful.  However, by \cite[Remarque 3.1.17(ii)]{BeilinsonBernsteinDeligne82}, we do have, for all $A,B \in \HH(\UU) \subset \Db \AA$:
\begin{eqnarray*}
\Hom_{\HH(\UU)}(A,B) &\cong& \Hom_{\Db \AA}(A,B), \\
\Ext^1_{\HH(\UU)}(A,B) &\cong& \Ext^1_{\Db \AA}(A,B).
\end{eqnarray*}
In particular, a short exact sequence $0 \to B \to M \to A \to 0$ in $\HH(\UU)$ corresponds to a triangle $B \to M \to A \to B[1]$ in $\Db \AA$ (see \cite[Th\'{e}or\`{e}me 1.3.6]{BeilinsonBernsteinDeligne82}).
\end{remark}

How far the realization functor $F:\Db \HH(\UU) \to \Db \AA$ is from being an equivalence, is determined by the relations between $\Ext_{\HH(\UU)}^n(A,B)$ and $\Ext_{\Db \AA}^n(A,B)$, for all $A,B \in \HH(\UU)$ and $n \geq 2$.  We will use the following equivalent properties.

\begin{proposition}\label{proposition:DerivedEquivalence}
Let $\AA$ be an abelian category, and let $(\UU, \VV)$ be a $t$-structure in $\Db \AA$.  The following conditions are equivalent:
\begin{enumerate}
\item\label{enumerate:DerivedEquivalence1} for all $A,B \in \HH(\UU)$, all $n \geq 2$ and every morphism $f:A \to B[n]$, there is a morphism $C \to A$ in $\HH(\UU)$ which is epic in $\HH(\UU)$, such that $C \to A \to B[n]$ is zero,
\item\label{enumerate:DerivedEquivalence2} for all $A,B \in \HH(\UU)$, all $n \geq 2$ and every morphism $f:A \to B[n]$, there is a morphism $B \to C$ in $\HH(\UU)$ which is monic in $\HH(\UU)$, such that $A \to B[n] \to C[n]$ is zero,
\item\label{enumerate:DerivedEquivalence3} for all $A,B \in \HH(\UU)$, all $n \geq 2$ and every morphism $f:A \to B[n]$, there is a sequence
$$A = A_0 \to A_1[1] \to A_2[2] \to \cdots \to A_n[n] = B[n]$$
of morphisms (with $A_i \in \HH(\UU)$) which composes to $f:A \to B[n]$.
\end{enumerate}
\end{proposition}

The proof of Proposition \ref{proposition:DerivedEquivalence} follows from the following lemma.  For future reference, it will be convenient to state this lemma separately.

\begin{lemma}\label{lemma:DerivedEquivalence}
Let $\AA$ be an abelian category, and let $(\UU, \VV)$ be a $t$-structure in $\Db \AA$.  Let $f:A \to B[n]$ be a morphism in $\Db \AA$ where $A,B \in \HH(\UU)$ and $n \geq 2$.  The following two conditions are equivalent:
\begin{enumerate}
\item there is an morphism $C \to A$ in $\HH(\UU)$ which is epic in $\HH(\UU)$, such that $C \to A \to B[n]$ is zero,
\item there is a sequence $A = A_0 \to A_1[1] \to B[n]$ of morphisms (with $A_1 \in \HH(\UU)$) which composes to $f:A \to B[n]$,
\end{enumerate}
and the following two conditions are equivalent:
\begin{enumerate}
\item there is a morphism $B \to C$ in $\HH(\UU)$ which is monic in $\HH(\UU)$, such that $A \to B[n] \to C[n]$ is zero,
\item there is a sequence $A = A_0 \to A_{n-1}[n-1] \to B[n]$ of morphisms (with $A_{n-1} \in \HH(\UU)$) which composes to $f:A \to B[n]$.
\end{enumerate}
\end{lemma}

\begin{proof}
We only show the equivalence of the first two conditions; the other equivalence is dual.  Thus assume that the first statement is true, and let $A_1 \cong \ker_{\HH(\UU)}(C \to A)$.  There is a triangle $A_1 \to C \to A \to A_1[1]$ in $\Db \AA$ (see Remark \ref{remark:ExtInHeart}).  Since the composition $C \to A \to B[n]$ is zero, we know that the map $f$ factors as $A \to A_1[1] \to B[n]$.

For the other direction, the triangle $A_1 \to C \to A \to A_1[1]$ in $\Db \AA$ built on the morphism $A \to A[1]$ corresponds to a short exact sequence $0 \to A_1 \to C \to A \to 0$ in $\HH(\UU)$ (see Remark \ref{remark:ExtInHeart}).  The epimorphism $C \to A$ is the morphism from the first statement.
\end{proof}

For the benefit of the reader, we present a proof of Proposition \ref{proposition:DerivedEquivalence}.

\begin{proof}[Proof of Proposition \ref{proposition:DerivedEquivalence}.]
Assume that (\ref{enumerate:DerivedEquivalence1}) holds, we want to show that (\ref{enumerate:DerivedEquivalence3}) holds.  When $n=2$, this follows directly from Lemma \ref{lemma:DerivedEquivalence}.

We will proceed by induction.  Consider a morphism $f:A \to B[n]$ where $A,B \in \HH(\UU)$ and $n \geq 3$.  By Lemma \ref{lemma:DerivedEquivalence}, we know that $f$ factors as
$$A = A_0 \stackrel{f_1}{\rightarrow} A_1[1] \stackrel{f_2}{\rightarrow} B[n]$$
(with $A_1 \in \HH(\UU)$).  By applying the induction hypothesis on morphism $f_2:A_1[1] \to B[n]$ (or, more accurately, on $f_2[-1]: A_1 \to B[n-1]$), we find a sequence
$$A = A_0 \stackrel{f_1}{\rightarrow} A_1[1] \to A_2[2] \to \cdots \to A_n[n] = B[n]$$
of morphisms (with $A_i \in \HH(\UU)$), composing to $f:A \to B[n]$, as required.

For the other direction, assume that (\ref{enumerate:DerivedEquivalence3}) holds.  To prove (\ref{enumerate:DerivedEquivalence1}), consider a morphism $f:A \to B[n]$ in $\Db \AA$ where $A,B \in \HH(\UU)$.  By (\ref{enumerate:DerivedEquivalence3}), we know there exists a sequence
$$A = A_0 \to A_1[1] \to A_2[2] \to \cdots \to A_n[n] = B[n]$$
of morphisms (with $A_i \in \HH(\UU)$), composing to $f:A \to B[n]$.  In particular, we have a sequence $A = A_0 \to A_1[1] \to B[n]$ of morphisms (with $A_1 \in \HH(\UU)$) which composes to $f:A \to B[n]$.  By Lemma \ref{lemma:DerivedEquivalence}, we know that there is a morphism $C \to A$ in $\HH(\UU)$, that is epic in $\HH(\UU)$, such that the composition $C \to A \to B[n]$ is zero.  This establishes (\ref{enumerate:DerivedEquivalence1}).

Similarly, one proves that (\ref{enumerate:DerivedEquivalence2}) and (\ref{enumerate:DerivedEquivalence3}) are equivalent.
\end{proof}

%\begin{proof}
%Assume that the first statement is true, and let $K \cong \ker(C \to A)$.  There is a triangle $K \to C \to A \to K[1]$.  Since the composition $C \to A \to B[n]$ is zero, we know that the map $f$ factors as $A \to K[1] \to B[n]$.  Iteration implies that the third statement holds.
%
%Assume that the third statement holds.  There is a triangle $A_1 \to C \to A \to A_1[1]$ in $\Db \AA$ and hence a short exact sequence $0 \to A_1 \to C \to A \to 0$ in $\HH(\UU)$.  The epimorphism $C \to A$ is the morphism from the first statement.
%
%Similarly, one shows that the second and the third statement are equivalent.
%\end{proof}

The following theorem is standard (see \cite[Proposition 3.1.16]{BeilinsonBernsteinDeligne82}, \cite[3.2 Th\'{e}or\`{e}me]{KellerVossieck87}, or \cite[Exercise IV.4.1]{GelfandManin03}).

\begin{theorem}\label{theorem:BBD}
Let $\AA$ be an abelian category, and let $(\UU, \VV)$ be a $t$-structure in $\Db \AA$.  The $t$-structure $(\UU, \VV)$ induces a derived equivalence $\Db \HH(\UU) \cong \Db \AA$ if and only if $(\UU, \VV)$ is a bounded $t$-structure and the equivalent conditions in Proposition \ref{proposition:DerivedEquivalence} hold.
\end{theorem}

\begin{proof}
If the $t$-structure is bounded and the equivalent conditions in Proposition \ref{proposition:DerivedEquivalence} hold, then \cite[Proposition 3.1.16]{BeilinsonBernsteinDeligne82} implies that the realization functor $F: \Db \HH(\UU) \to \Db \AA$ is an equivalence.  This proves one implication.

The other implication follows directly from Proposition \ref{proposition:ContainedHearts}.
%For the other implication, assume that the $t$-structure $(\UU, \VV)$ induces a derived equivalence, thus there is a triangle equivalence $G: \Db \HH(\UU) \cong \Db \AA$ lifting the natural embedding $\HH(\UU) \to  \Db \AA$.  We can use $G$ to transfer the natural $t$-structure $(D^{\leq 0}, D^{\geq 0})$ on $\Db \HH(\UU)$ to $\Db \AA$, obtaining a $t$-structure $(\UU', \VV')$ on $\Db \AA$ (thus $\UU'$ and $\VV'$ are the essential images of $D^{\leq 0}$ and $D^{\geq 0}$ under $G$, respectively).  It follows from Lemma \ref{lemma:ContainedHearts} that $(\UU, \VV) = (\UU', \VV')$.  One can use $G$ to prove that $(\UU, \VV)$ is a bounded $t$-structure and the equivalent conditions in Proposition \ref{proposition:DerivedEquivalence} hold, since these statements are true for the standard $t$-structure $(D^{\leq 0}, D^{\geq 0})$ on $\Db \HH(\UU)$.
\end{proof}

\begin{proposition}
Let $\AA$ be an abelian category, and let $(\UU, \VV)$ be a $t$-structure in $\Db \AA$ with heart $\HH(\UU) = \HH$.  The following are equivalent:
\begin{enumerate}
\item\label{enumerate:InducesDerivedEquivalence1} the $t$-structure $(\UU, \VV)$ induces a derived equivalence,
\item\label{enumerate:InducesDerivedEquivalence2} the realization functor $F: \Db \HH \to \Db \AA$ is an equivalence,
\item\label{enumerate:InducesDerivedEquivalence3} there is an equivalence $\Phi: \Db \HH \to \Db \AA$ such that $\UU$ and $\VV$ are the essential images of $D^{\leq 0}_\HH$ and $D^{\geq 0}_\HH$, respectively (here, $(D^{\leq 0}_\HH, D^{\geq 0}_\HH)$ is the standard $t$-structure on $\Db \HH$).
\end{enumerate}
\end{proposition}

\begin{proof}
We start by showing that (\ref{enumerate:InducesDerivedEquivalence1}) $\Leftrightarrow$ (\ref{enumerate:InducesDerivedEquivalence3}).  The implication (\ref{enumerate:InducesDerivedEquivalence1}) $\Rightarrow$ (\ref{enumerate:InducesDerivedEquivalence3}) follows directly from Proposition \ref{proposition:ContainedHearts}.  We now consider the direction (\ref{enumerate:InducesDerivedEquivalence3}) $\Leftarrow$ (\ref{enumerate:InducesDerivedEquivalence1}).  Since the composition $\HH \to \Db \HH \stackrel{\Phi}{\rightarrow} \Db \AA$ is fully faithful and the essential image coincides with the heart of $(\UU, \VV)$ on $\Db \AA$, there is an autoequivalence $\Psi: \HH \to \HH$ such that the following diagram essentially commutes:
$$\xymatrix{\HH \ar[r]^{\Psi} \ar[d] & \HH \ar[d] \\
{\Db \HH} \ar[r]_{\Phi} & {\Db \AA}}$$
Let $\Psi^{-1}: \HH \to \HH$ be a quasi-inverse to $\Psi$.  This then induces an equivalence $\Psi^{-1}: \Db \HH \to \Db \HH$ such that
$$\xymatrix{\HH \ar[r]^{\Psi^{-1}} \ar[d] & \HH \ar[d] \\
{\Db \HH} \ar[r]^{\Psi^{-1}} & {\Db \HH}}$$
essentially commutes.  Combining these two diagrams, we obtain the diagram
$$\xymatrix{\HH \ar[r]^{\Psi^{-1}} \ar[d] & \HH \ar[d] \ar[r]^{\Psi} & \HH \ar[d]\\
{\Db \HH} \ar[r]_{\Psi^{-1}} & {\Db \HH} \ar[r]_{\Phi} & {\Db \AA}}$$
where the squares essentially commute.  This establishes (\ref{enumerate:InducesDerivedEquivalence1}).

We will now show that (\ref{enumerate:InducesDerivedEquivalence1}) $\Rightarrow$ (\ref{enumerate:InducesDerivedEquivalence2}).  We can use a triangle equivalence $G: \Db \HH \to \Db \AA$ given by (\ref{enumerate:InducesDerivedEquivalence1}) to show that the equivalence conditions of Proposition \ref{proposition:DerivedEquivalence} hold (since they hold for the standard $t$-structure on $\Db \HH$ and $G$ extends the embedding $\HH \to \Db \AA$).  Theorem \ref{theorem:BBD} then establishes  (\ref{enumerate:InducesDerivedEquivalence2}).

Finally, since the realization functor extends the embedding $\HH \to \Db \AA$, the implication (\ref{enumerate:InducesDerivedEquivalence2}) $\Rightarrow$ (\ref{enumerate:InducesDerivedEquivalence1}) is direct.
\end{proof}

%\begin{proposition}\label{proposition:RealizationEquivalence}
%Let $\AA$ be an abelian category, and let $(\UU, \VV)$ be a $t$-structure in $\Db \AA$ with heart $\HH$.  If there exist a triangle equivalence $G: \Db \HH \to \Db \AA$ which extends the natural embedding $\HH \to \Db \AA$, then the realization functor $F:\Db \HH \to \Db \AA$ is an equivalence.
%\end{proposition}
%
%\begin{proof}
%Let $(D^{\leq 0}_\HH, D^{\geq 0}_\HH)$ be the standard $t$-structure on $\Db \HH$.  Note that $\UU$ and $\VV$ are the essential images of $D^{\leq 0}_\HH$ and $D^{\geq 0}_\HH$ under $G$, respectively.  Using that the equivalent conditions of Lemma \ref{lemma:DerivedEquivalence} hold for the standard $t$-structure $(D^{\leq 0}_\HH, D^{\geq 0}_\HH)$ on $\Db \HH$, we find that they also hold for the $t$-structure $(\UU, \VV)$ on $\Db \AA$.  Theorem \ref{theorem:BBD} now implies that the realization functor $F:\Db \HH \to \Db \AA$ is an equivalence.
%\end{proof}

\begin{remark}
As in \cite[\S 7]{Rickard89}, we do not make any claims about the uniqueness of a triangle equivalence $G: \Db \HH \to \Db \AA$ which extends $\HH \to \Db \AA$.  The question is addressed in \cite[\S 5.3]{Neeman91} (with an adjusted definition of triangle functors) and in \cite[Theorem 1.8]{MiyachiYekutieli01} for hereditary algebras.
\end{remark}

\begin{example}\label{example:NoninducedDerivedEquivalence}
Let $\grmod k$ be the category of finite-dimensional $\bN$-graded vector spaces.  We will write $\grmod_{> 0} k$ and $\grmod_0 k$ for the full subcategories given by all graded vector spaces $V$ such that $V_0 = 0$ and $V_i = 0$ for all $i > 0$, respectively.    On $\Db \grmod k$, we consider the $t$-structure $(\UU, \VV)$ given by:
\begin{eqnarray*}
\Ob \UU &=& \{X \in \Db \grmod k \mid \mbox{$H^{i} X = 0$ for $i > 0$ and $H^{i} X \in \grmod_{> 0} k$ for $i \leq 0$}\}, \\
\Ob \VV &=& \{X \in \Db \grmod k \mid \mbox{$H^{i} X \in \grmod_0 k$ for $i < 0$}\}.
\end{eqnarray*}
We can describe the heart $\HH = \UU \cap \VV$ as
$$\Ob \HH = \{X \in \Db \grmod k \mid \mbox{$H^i X = 0$ for $i \not= 0$, and $H^0 X \in \grmod_{> 0} k$}\}.$$

In particular, $\HH \cong \grmod_{> 0} k \cong \grmod k$.  Hence, $\grmod k$ and $\HH$ are equivalent categories, and thus derived equivalent.

However, we find that the realization functor $F: \Db \HH \to \Db \grmod k$ is not essentially surjective.  Indeed, an object $X \in \Db \AA$ lies in the essential image of $F$ if and only if $H^i X \in \grmod_{> 0}$ for all $i \in \bZ$; thus the essential image of $F$ is $\Db_{\grmod_{> 0}} \grmod k \subsetneq \Db \grmod k$.

This means that the $t$-structure $(\UU, \VV)$ on $\grmod k$ does not induce a derived equivalence, even though the categories $\grmod k$ and $\HH$ are derived equivalent.
\end{example}

\subsection{Compatibility with the Serre functor}

Let $\AA$ be an abelian category and let $(\UU, \VV)$ be a $t$-structure on $\Db \AA$.  Our next goal is to show that, if $\AA$ has Serre duality, it is necessary that $\bS \UU \subseteq \UU$ holds for $(\UU, \VV)$ to induce a triangle equivalence (see Corollary \ref{corollary:Needed} below).

\begin{lemma}\label{lemma:GoodEpimorphism}
Let $\AA$ be an abelian category, and write $(\UU, \VV)$ for the standard $t$-structure in $\Db \AA$.  Let $A \in \AA$ and $U \in \UU$.  For any morphism $A \to U[1]$, there is an epimorphism $B \to A$ in $\AA$ such that the composition $B[0] \to A[0] \to U[1]$ is zero.
\end{lemma}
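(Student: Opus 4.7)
The plan is to reduce the problem to an extension question in $\AA$ by using the truncation triangle for $U$, then read off $B$ directly as the middle term of a short exact sequence.

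First I would invoke the canonical truncation triangle
$$\tau^{\leq -1} U \longrightarrow U \longrightarrow H^0(U)[0] \longrightarrow (\tau^{\leq -1} U)[1]$$
for the standard $t$-structure. Shifting by $[1]$ and applying $\Hom_{\Db \AA}(A[0], -)$ produces a long exact sequence containing
$$\Hom(A[0], (\tau^{\leq -1} U)[1]) \longrightarrow \Hom(A[0], U[1]) \longrightarrow \Hom(A[0], H^0(U)[1]).$$
Because $\tau^{\leq -1} U$ has cohomology concentrated in degrees $\leq -1$, the object $(\tau^{\leq -1} U)[1]$ lies in $D^{\leq -2}_\AA \subseteq D^{\leq -1}_\AA$, while $A[0] \in D^{\geq 0}_\AA$, so the leftmost group vanishes by the standard $t$-structure axioms. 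Hence $\Hom(A[0], U[1]) \hookrightarrow \Hom(A[0], H^0(U)[1])$, and the same argument applied with any object of $\AA$ in place of $A$ shows the analogous injectivity.

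Next I would push the given morphism $f: A[0] \to U[1]$ to $\bar{f}: A[0] \to H^0(U)[1]$. Under the isomorphism $\Hom_{\Db \AA}(A[0], H^0(U)[1]) \cong \Ext^1_\AA(A, H^0(U))$, the element $\bar{f}$ corresponds to a short exact sequence in $\AA$
$$0 \longrightarrow H^0(U) \longrightarrow B \longrightarrow A \longrightarrow 0,$$
which, via Remark \ref{remark:ExtInHeart}, is represented in $\Db \AA$ by the triangle $H^0(U)[0] \to B[0] \to A[0] \xrightarrow{\bar{f}} H^0(U)[1]$. In particular the composition $B[0] \to A[0] \to H^0(U)[1]$ is zero.

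Finally I would invoke the injectivity of $\Hom(B[0], U[1]) \hookrightarrow \Hom(B[0], H^0(U)[1])$ (same vanishing argument) to conclude that already the composition $B[0] \to A[0] \xrightarrow{f} U[1]$ vanishes. Since $B \to A$ is an epimorphism in $\AA$, this yields the required $B$. There is no real obstacle here beyond assembling the truncation triangle correctly; the only point to be careful about is justifying that the vanishing of the composition into $H^0(U)[1]$ lifts back to a vanishing into $U[1]$, which is precisely where the injectivity of the two $\Hom$ maps (both obtained from the same long exact sequence argument) is used.
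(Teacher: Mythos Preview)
There is a genuine gap: the vanishing $\Hom(A[0],(\tau^{\leq -1}U)[1])=0$ that you invoke is not a consequence of the $t$-structure axioms, and it is false in general. The orthogonality for the standard $t$-structure reads $\Hom(D^{\leq -1},D^{\geq 0})=0$, not $\Hom(D^{\geq 0},D^{\leq -1})=0$. Concretely, $(\tau^{\leq -1}U)[1]$ is built from shifts $C[n]$ with $C\in\AA$ and $n\geq 2$, and $\Hom(A[0],C[n])=\Ext^{n}_{\AA}(A,C)$ need not vanish. For instance, if $\AA$ has global dimension at least $2$, pick $A,C\in\AA$ with $\Ext^{2}(A,C)\neq 0$ and set $U=C[1]\in\UU$; then $H^{0}(U)=0$, so your ``injective'' map $\Hom(A[0],U[1])\to\Hom(A[0],H^{0}(U)[1])$ is a nonzero group mapping to $0$. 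The same problem recurs in your final step, where you need the analogous injectivity for $B$ in place of $A$.

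This is exactly why the paper proceeds by induction on the cohomological amplitude of $U$: at each stage one peels off a single cohomology object (so that the relevant map lands in a genuine $\Ext^{m+1}$ between objects of $\AA$), produces an epimorphism killing that piece, and then applies the induction hypothesis to the remaining truncation. Your one-step reduction to $H^{0}(U)$ tries to collapse this d\'evissage; it would work if $\AA$ were hereditary (since then $\Ext^{\geq 2}=0$ and the vanishing you claim does hold), but the lemma is stated for an arbitrary abelian $\AA$, so some iterative argument is unavoidable.
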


\begin{proof}
Since $(\UU, \VV)$ is bounded, we can consider the maximal integer $n \in \bZ$ such that $U \in \UU[n]$ and the minimal integer $m \in \bZ$ such that $U \in \VV[m]$.  Note that $U$ can only be nonzero if $m \geq n \geq 0$.  We will assume that $U$ is nonzero, and proceed by induction on $m-n$.

When $m-n=0$, we know that $U \in \AA[m]$ and thus the morphism $A \to U[1]$ corresponds to an element in $\Ext^{m+1}(A,U[-m])$.  The existence of the required epimorphism $B \to A$ in $\AA$ is standard.

Assume now that $m-n > 0$.  Consider the triangle
$$(U[1])_{\UU[n+2]} \to U[1] \to H^{-n-1} (U[1])[n+1] \to (U[1])_{\UU[n+2]}[1].$$
Note that $H^{-n-1} (U[1])[n+1] \in \AA[n+1]$ and thus there exists an epimorphism $A_1 \to A$ in $\AA$ such that the composition $A_1 \to A \to H^{-n-1} (U[1])[n+1]$ is zero.  The composition $A_1 \to A \to U[1]$ thus factors through $(U[1])_{\UU[n+2]}$.  Note that 
$$(U[1])_{\UU[n+2]}[-1] \in \UU[n+1] \cap \VV[m]$$
so that the induction hypothesis implies the existence of an epimorphism $B \to A_1$ in $\AA$ such that $B \to A_1 \to (U[1])_{\UU[n+2]}[1]$ is zero.  The composition $B \to A_1 \to A$ is the required epimorphism.
\end{proof}

\begin{proposition}\label{proposition:OneWay}
Let $\AA$ be an Ext-finite abelian category with Serre duality, and write $(\UU, \VV)$ for the standard $t$-structure in $\Db \AA$.  Then the $t$-structure $(\UU, \VV)$ is bounded and $\bS \UU \subseteq \UU$.
\end{proposition}

\begin{proof}
It is clear that the standard $t$-structure $(\UU, \VV)$ is bounded.  We will show that $\bS \UU \subseteq \UU$. Seeking a contradiction, let $X \in \UU$ such that $\bS X \not\in \UU$.  Since $\UU$ is closed under suspensions and extensions, we may restrict ourselves to $X \in \AA[0]$.  Let $n \in \bZ$ be the largest integer such that $H^{n+1}(\bS X) \not= 0$.  Since $\bS X \not\in \UU$, we know that $n \geq 0$.  To ease notation, let $Y = X[n] \in \AA[n]$.  There is the triangle
$$(\bS Y)_\UU \to \bS Y \to (\bS Y)^{\VV[-1]} \to (\bS Y)_\UU[1].$$

We have chosen $Y$ in such a way that $(\bS Y)^{\VV[-1]} \cong H^{n+1}(\bS X)[-1] \in \AA[-1]$ is nonzero.  It follows from Lemma \ref{lemma:GoodEpimorphism} that there is an epimorphism $B \to (\bS Y)^{\VV[-1]}$ in $\AA[-1]$ such that the composition
$B \to (\bS Y)^{\VV[-1]} \to (\bS Y)_\UU[1]$
is zero.  Hence, $\Hom(B, \bS Y) \not= 0$ and by Serre duality, $\Hom(Y,B) \not= 0$.  However, the latter is impossible since $Y \in \AA[n]$ (with $n \geq 0$) and $B \in \AA[-1]$.  This finishes the proof.
\end{proof}

\begin{corollary}\label{corollary:Needed}
Let $\CC$ be a Hom-finite triangulated category with Serre duality and let $(\UU, \VV)$ be a $t$-structure in $\CC$.  If the embedding $\HH(\UU) \to \CC$ can be extended to a triangle equivalence $G: \Db \HH(\UU) \to \CC$, then $(\UU, \VV)$ is bounded and $\bS \UU \subseteq \UU$.
\end{corollary}

\begin{proof}
Choose a quasi-inverse $G^-$ to $G$ and let $\bS_\CC: \CC \to \CC$ be a Serre functor on $\CC$.  Note that $\Db \HH(\UU)$ has a Serre functor $\bS: \Db \HH(\UU) \to \Db \HH(\UU)$ given by $\bS = G^- \circ \bS_\CC \circ G$.

Using $G^-$, we may transfer the $t$-structure $(\UU, \VV)$ on $\CC$ to $\Db \HH(\UU)$; this gives a $t$-structure $(\UU', \VV')$ on $\Db \HH(\UU)$ where $\UU'$ and $\VV'$ are the essential images of $\UU$ and $\VV$ under $G^-$, respectively.

Since $G: \Db \HH(\UU) \to \CC$ extends the embedding $\HH(\UU) \to \CC$, we know that $G^-$ maps the heart $\HH(\UU)$ of the $t$-structure $(\UU, \VV)$ to $\HH(\UU)[0]$.  Hence, the essential image of $\HH(\UU)$ under $G^-$ coincides with $\HH(\UU)[0] \subset \Db \HH(\UU)$.  Since the former is the heart of the $t$-structure $(\UU', \VV')$ and the latter is the heart of the standard $t$-structure, we may apply Lemma \ref{lemma:ContainedHearts} to see that $(\UU', \VV')$ is the standard $t$-structure on $\Db \HH(\UU)$.

It now follows from Proposition \ref{proposition:OneWay} that $(\UU', \VV')$ is bounded and that $\bS \UU' \subseteq \UU'$.  Since Serre functors commute with equivalences, we may conclude that $(\UU, \VV)$ is bounded and that $\bS_\CC \UU \subseteq \UU.$
\end{proof}

The main goal of this paper is to show that in some cases, the converse of Corollary \ref{corollary:Needed} also holds (see Theorem \ref{theorem:Introduction}).
\section{Finitely generated aisles}\label{section:FinitelyGenerated}

Let $\Lambda$ be a finite-dimensional algebra of finite global dimension, so that $\Db \mod \Lambda$ has Serre duality.  The main result is Proposition \ref{proposition:FinitelyGeneratedAisle} where we show that the answer to Question \ref{question} is positive when one restricts oneself to finitely generated $t$-structures; thus, if $\UU$ is a finitely generated aisle in $\Db \mod \Lambda$, then $\UU$ induces a triangle equivalence $\Db \HH(\UU) \to \Db \mod \Lambda$ if and only if $(\UU, \VV)$ is bounded and $\bS \UU \subseteq \UU$. 

In \S\ref{section:DerivedDiscrete} we then consider the case where $\Lambda$ is derived discrete.  Here, one knows that all aisles are finitely generated, and hence Proposition \ref{proposition:FinitelyGeneratedAisle} implies that Question \ref{question} is answered positively for this class of algebras (see Corollary \ref{corollary:DerivedDiscrete}).

\subsection{Finitely generated aisles closed under the Serre functor}\label{subsection:FinitelyGenerated}

Recall that an aisle $\UU \subseteq \Db \mod \Lambda$ is called finitely generated if there is a partial silting object $E$ such that $\UU$ is the smallest preaisle containing $E$.  Let $(\UU, \VV)$ be a $t$-structure on $\Db \mod \Lambda$ and assume that $\UU$ is finitely generated. We will work toward Proposition \ref{proposition:FinitelyGeneratedAisle} where we show that $(\UU, \VV)$ induces a triangle equivalence $\Db \HH(\UU) \to \Db \mod \Lambda$ if and only if $(\UU, \VV)$ is bounded and $\bS \UU \subseteq \UU$.

\begin{proposition}\label{proposition:TiltingObject}
Let $\Lambda$ be a finite-dimensional algebra of finite global dimension.  Let $(\UU, \VV)$ be a $t$-structure in $\Db \mod \Lambda$ and assume that $\UU$ is finitely generated by a partial silting object $E$.
\begin{enumerate}
  \item The object $E$ is a silting object if and only if $(\UU, \VV)$ is a bounded $t$-structure.
  \item The object $E$ is a tilting object if and only if $(\UU, \VV)$ is a bounded $t$-structure and $\bS \UU \subseteq \UU$.
\end{enumerate}
\end{proposition}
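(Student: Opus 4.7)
My plan is to split the proof into the two parts and treat (1) first, since the description of a finitely generated aisle recalled in Remark \ref{remark:FinitelyGenerated} is only applicable once $E$ is known to be silting.

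For (1), the direction $(\Leftarrow)$ is immediate: since $\UU$ is the smallest preaisle containing $E$ we have $\UU \subseteq \thick(E)$, so $\bigcup_{n \in \bZ} \UU[n] \subseteq \thick(E)$, and boundedness of $(\UU, \VV)$ forces this union to equal $\Db \mod \Lambda$, yielding $\thick(E) = \Db \mod \Lambda$ and hence that $E$ is silting. For $(\Rightarrow)$, I would check that $\bigcup_{n \in \bZ} \UU[n]$ is a thick subcategory: closure under suspensions and retracts is clear, and closure under extensions uses that any two objects in $\UU[m]$ and $\UU[n]$ both lie in the extension-closed subcategory $\UU[\min(m,n)]$. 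This thick subcategory contains $E$ and therefore equals $\thick(E) = \Db \mod \Lambda$. The identical thickness argument applies to $\bigcup_{n \in \bZ} \VV[n]$, so it suffices to verify $E \in \VV[N]$ for some $N$ using the description $\VV = \UU^{\perp_0}[1]$: this reduces to showing $\Hom(E, E[m]) = 0$ for $m \ll 0$, which holds because $\Lambda$ has finite global dimension and $E$ has bounded cohomology.

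For (2), in either direction (1) lets me take $E$ silting, so I may apply the description that $A \in \UU$ if and only if $\Hom(E, A[n]) = 0$ for all $n > 0$. Serre duality then gives $\Hom(E, \bS E[n]) \cong \Hom(E[n], E)^* \cong \Hom(E, E[-n])^*$, so $\bS E \in \UU$ if and only if $\Hom(E, E[-n]) = 0$ for all $n > 0$; together with partial silting, this is precisely the tilting condition on $E$. In $(\Rightarrow)$, $E$ tilting gives $\bS E \in \UU$, and since $\bS$ preserves triangles and shifts while $\UU$ is the smallest preaisle containing $E$, it follows that $\bS \UU \subseteq \UU$. In $(\Leftarrow)$, boundedness gives $E$ silting by (1), and $\bS \UU \subseteq \UU$ yields $\bS E \in \UU$, completing the tilting condition via the equivalence above.

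The main bookkeeping concern is $(\Rightarrow)$ in (1): each of the two union-boundedness conditions demands its own argument, and the $\VV$-direction relies crucially on finite global dimension of $\Lambda$ to force $\Hom(E, E[m]) = 0$ for $m \ll 0$. Once this is in place, part (2) reduces to a direct manipulation of the description of $\UU$ from Remark \ref{remark:FinitelyGenerated} combined with Serre duality.
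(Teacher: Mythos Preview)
Your proof is correct and, for part (2), essentially identical to the paper's: both arguments use Serre duality to translate $\bS E \in \UU$ into the vanishing of $\Hom(E,E[-n])$ for $n>0$, invoke Remark \ref{remark:FinitelyGenerated} (which requires $E$ silting) to characterize membership in $\UU$, and then pass from $\bS E \in \UU$ to $\bS \UU \subseteq \UU$ via the fact that $\bS\UU$ is the smallest preaisle containing $\bS E$.

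The only substantive difference is in part (1), direction $(\Rightarrow)$. The paper simply cites \cite[Proposition 2.17]{AiharaIyama12} for the fact that a silting object yields a bounded $t$-structure, whereas you give a self-contained argument: you show directly that $\bigcup_{n} \UU[n]$ and $\bigcup_{n} \VV[n]$ are thick subcategories containing $E$, and then use $\thick(E) = \Db \mod \Lambda$. Your treatment of the $\VV$-side is the more delicate point, and you handle it correctly by reducing to $\Hom(E,E[m]) = 0$ for $m \ll 0$ via finite global dimension. This makes your argument slightly more elementary and keeps the paper self-contained at this step, at the cost of a few extra lines; the paper's citation is shorter but hides exactly this computation.
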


\begin{proof}
\begin{enumerate}
  \item If $(\UU, \VV)$ is a bounded $t$-structure, then $\thick (E) = \thick(\UU) = \Db \mod \Lambda$ and hence $E$ is a silting object.  The other direction is \cite[Proposition 2.17]{AiharaIyama12}.
  
  \item This is essentially \cite[Lemma 4.6]{LiuVitoriaYang14}.  First, assume that $(\UU, \VV)$ is bounded and that $\bS \UU \subseteq \UU$.  From the first part of the proof, we know that $E$ is a silting object.  Furthermore, we have $\Hom(E, \bS E [n]) = 0$ for $n > 0$ (here we use that $\bS \UU \subseteq \UU$ and that $E \in \UU$ is $\UU$-projective).  By Serre duality, we have $\Hom(E,E[-n]) = 0$ for $n > 0$.  This shows that $E$ is a partial tilting object.  Since $(\UU, \VV)$ is bounded, the first part of the proof shows that $E$ is a tilting object.
  
  For the other direction, assume that $E$ is a tilting object.  The first part of the proof shows that $(\UU, \VV)$ is a bounded $t$-structure.  Since $E$ is a tilting object, we know that $\Hom(E,E[-n]) = 0$ for $n > 0$ and thus by Serre duality that $\Hom(E,\bS E[n]) = 0$.  Remark \ref{remark:FinitelyGenerated} shows that $\bS E \in \UU$.  Since $\bS \UU$ is the smallest preaisle in $\Db \mod \Lambda$ containing $\bS E$, we conclude that $\bS \UU \subseteq \UU$.
\end{enumerate}
\end{proof}

\begin{proposition}\label{proposition:FiniteGlobalDimension}
Let $\Lambda$ be a finite-dimensional algebra of finite global dimension.  If $E \in \Db \mod \Lambda$ is a tilting object, then $\End E$ has finite global dimension.
\end{proposition}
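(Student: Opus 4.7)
The plan is to apply the basic machinery of tilting theory: a tilting object always induces a derived equivalence, and finite global dimension of the base passes through any such equivalence.

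Set $\Gamma = \End E$. Because $\Lambda$ has finite global dimension, every object of $\Db \mod \Lambda$ is perfect; in particular $E$ is quasi-isomorphic to a bounded complex of finitely generated projective $\Lambda$-modules, so $\Gamma$ is a finite-dimensional $k$-algebra. By Rickard's theorem (in the dg-enhanced form due to Keller), the tilting conditions $\Hom(E, E[n]) = 0$ for all $n \neq 0$ together with $\thick(E) = \Db \mod \Lambda$ yield a triangulated equivalence
$$\RHom(E, -) : \Db \mod \Lambda \stackrel{\sim}{\longrightarrow} \Db \mod \Gamma$$
sending $E$ to $\Gamma_\Gamma$.

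I would then transfer a finiteness property across this equivalence. Since $\gldim \Lambda < \infty$, the inclusion $\Kb(\proj \Lambda) \hookrightarrow \Db \mod \Lambda$ is an equivalence, so every object of $\Db \mod \Lambda$ is a perfect complex. Transporting along the displayed equivalence, every object of $\Db \mod \Gamma$ is also perfect. Applied to the (finitely many) simple $\Gamma$-modules, each simple admits a bounded projective resolution, so $\gldim \Gamma < \infty$.

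A variant more in the spirit of the present paper is to note that $\Db \mod \Lambda$ admits a Serre functor $\bS$ (since $\gldim \Lambda < \infty$), and triangulated equivalences transport Serre functors, so $\Db \mod \Gamma$ also admits a Serre functor. Using that $\bS S$ has bounded cohomology for any simple $\Gamma$-module $S$, the isomorphism $\Ext^n(S, S') \cong \Hom(S', \bS S[-n])^*$ forces $\Ext$-vanishing between simples in sufficiently high degree, which again gives $\gldim \Gamma < \infty$. In either approach the only nontrivial ingredient is the existence of the derived equivalence induced by a tilting object, so the bulk of the ``work'' consists in invoking Rickard/Keller.
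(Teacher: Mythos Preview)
Your proposal is correct and follows essentially the same line as the paper. The paper invokes Keller to obtain an equivalence $\perf(\End E) \simeq \Db\mod\Lambda$ and then cites \cite[Proposition~7.25]{Rouquier08} to conclude that $\End E$ has finite global dimension; you instead invoke Rickard/Keller for the equivalence $\Db\mod\Lambda \simeq \Db\mod\Gamma$ and then argue directly that each simple $\Gamma$-module is perfect, which is really the content hidden behind the Rouquier citation.
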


\begin{proof}
It follows from \cite{Keller94} that there is an equivalence $\perf (\End E) \to \Db \mod \Lambda$, and then \cite[Proposition 7.25]{Rouquier08} implies that $\End E$ has finite global dimension.
\end{proof}

\begin{remark}
If one replaced the finite-dimensional algebra $\Lambda$ in Proposition \ref{proposition:FiniteGlobalDimension} by a small category and consequently relaxed the ``tilting object'' to ``tilting subcategory,'' the statement would be false.  Indeed, in \cite[Example 4.11]{BergVanRoosmalen14} there is an example of a small category $\aa$ (such that $\mod \aa$ is Ext-finite, hereditary, and has Serre duality) and a tilting subcategory $\bb \subseteq \Db \mod \aa$ (thus $\Db \mod \aa \cong \Db \mod \bb$) such that nonetheless, $\mod \bb$ has infinite global dimension.
\end{remark}

The following example shows that Proposition \ref{proposition:FiniteGlobalDimension} does not hold when we only require $E$ to be a partial tilting module.

\begin{example}
Let $Q$ be the quiver
$\xymatrix@1{a \ar@/_/[r]_{\alpha} & b \ar@/_/[l]_{\beta}}$
and let $\Lambda = kQ / (\alpha \beta)$.  Note that $\Lambda$ is a finite-dimensional algebra of global dimension two. Let $P_a$ be the projective associated to the vertex $a$.  The object $P_a[0] \in \Db \mod \Lambda$ is a partial tilting object, but the endomorphism algebra $\End P_a \cong k[t]/(t^2)$ has infinite global dimension.
\end{example}

\begin{proposition}\label{proposition:FinitelyGeneratedAisle}
Let $\Lambda$ be a finite-dimensional algebra of finite global dimension.  Let $(\UU, \VV)$ be a $t$-structure in $\Db \mod \Lambda$.  If $\UU$ is finitely generated, then $(\UU, \VV)$ induces a triangle equivalence $\Db \HH(\UU) \to \Db \mod \Lambda$ if and only if $(\UU, \VV)$ is bounded and $\bS \UU \subseteq \UU$.
  In this case, $\HH(\UU) \cong \mod \Gamma$, for a finite-dimensional algebra $\Gamma$ of finite global dimension.
\end{proposition}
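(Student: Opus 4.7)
The forward direction is immediate from Corollary~\ref{corollary:Needed}, so the task reduces to the converse. My plan is: assuming $(\UU, \VV)$ is bounded, $\bS \UU \subseteq \UU$, and $\UU$ is finitely generated by a partial silting object $E$, first promote $E$ to a tilting object via Proposition~\ref{proposition:TiltingObject}(2), then use Keller's theorem to obtain a derived equivalence that is $t$-exact with respect to the given $t$-structures, and finally compare this equivalence with the canonical BBD functor $\Db \HH(\UU) \to \Db \mod \Lambda$.

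More concretely, once $E$ is tilting, \cite{Keller94} supplies a triangulated equivalence $F: \perf(\End E) \to \Db \mod \Lambda$ sending $\End E [0]$ to $E$; Proposition~\ref{proposition:FiniteGlobalDimension} guarantees that $\End E$ has finite global dimension, so $\perf(\End E) = \Db \mod(\End E)$. Using Remark~\ref{remark:FinitelyGenerated}, the standard aisle on $\Db \mod(\End E)$ coincides with the finitely generated aisle generated by $\End E[0]$; applying $F$, this maps onto the finitely generated aisle in $\Db \mod \Lambda$ generated by $E$, which is $\UU$ by hypothesis. Thus $F$ is $t$-exact, and restricts to an equivalence of abelian categories $G: \mod(\End E) \stackrel{\sim}{\to} \HH(\UU)$.

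To conclude I would compare two $t$-exact lifts. The natural embedding $\HH(\UU) \hookrightarrow \Db \mod \Lambda$ extends canonically, via BBD, to a triangulated functor $\tilde{I}: \Db \HH(\UU) \to \Db \mod \Lambda$, while $G$ extends to an equivalence $\Db G: \Db \mod(\End E) \stackrel{\sim}{\to} \Db \HH(\UU)$. Both $F$ and $\tilde{I} \circ \Db G$ are $t$-exact triangulated functors out of $\Db \mod(\End E)$ that agree (up to natural isomorphism) on the heart $\mod(\End E)$. By the uniqueness of $t$-exact lifts we obtain $\tilde{I} \circ \Db G \cong F$, so $\tilde{I}$ is an equivalence.

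The main obstacle I anticipate is the final identification of the BBD-canonical functor $\tilde{I}$ with $F \circ (\Db G)^{-1}$: although standard, it relies on the (folklore) rigidity of $t$-exact extensions to the bounded derived category of the heart, and must be invoked carefully. Everything else is a routine assembly of Proposition~\ref{proposition:TiltingObject}, Proposition~\ref{proposition:FiniteGlobalDimension}, and Remark~\ref{remark:FinitelyGenerated}.
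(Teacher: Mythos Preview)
Your proposal is correct and follows essentially the same approach as the paper: use Proposition~\ref{proposition:TiltingObject} to upgrade $E$ to a tilting object, invoke Proposition~\ref{proposition:FiniteGlobalDimension} for finite global dimension of $\End E$, and then apply the Rickard/Keller derived equivalence (the paper cites \cite{Rickard89} rather than \cite{Keller94}, but the content is the same) to carry the standard $t$-structure to $(\UU,\VV)$. The only difference is that you explicitly address the identification of the BBD realisation functor with the tilting equivalence via uniqueness of $t$-exact lifts, whereas the paper treats this step as implicit.
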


\begin{proof}
Assume first that $(\UU, \VV)$ induces a triangle equivalence $\Db \HH(\UU) \to \Db \mod \Lambda$.  Corollary \ref{corollary:Needed} shows that $(\UU, \VV)$ is a bounded $t$-structure and $\bS \UU \subseteq \UU$.

For the other direction, assume that $(\UU, \VV)$ is bounded and $\bS \UU \subseteq \UU$.  By Proposition \ref{proposition:TiltingObject} we know that $\UU$ is generated by a tilting object $E$, and by Proposition \ref{proposition:FiniteGlobalDimension} that the global dimension of $\Gamma = \End E$ is finite.  Following \cite{Rickard89}, the embedding $E \to \Db \mod \Lambda$ lifts to an equivalence $- \stackrel{L}{\otimes}_{\Gamma}{E}: \Db \mod \Gamma \to \Db \mod \Lambda$, which maps the standard $t$-structure on $\Db \mod \Gamma$ to the $t$-structure $(\UU, \VV)$ on $\Db \mod \Lambda$.  This shows that $(\UU, \VV)$ induces a triangle equivalence $\Db \HH(\UU) \to \Db \mod \Lambda$, where $\HH(\UU) \cong \mod \Gamma$.
\end{proof}

\subsection{Application: derived equivalences for derived discrete algebras}\label{section:DerivedDiscrete}

In this subsection, let $k$ be an algebraically closed field and let $\Lambda$ be a finite-dimensional algebra (in this subsection, we do not require $\Lambda$ to be hereditary).  Let $K_0(\mod \Lambda)$ be the Grothendieck group of $\mod \Lambda$.  For an object $M \in \mod \Lambda$, we will write $[M] \in K_0(\mod \Lambda)$ for the corresponding element in the Grothendieck group.  Let $H^i: \Db (\mod \Lambda) \to \mod \Lambda$ be the usual homology functors, thus the homology functors associated with the standard $t$-structure on $\Db (\mod \Lambda)$.  

The following definition is based on \cite{Vossieck01} (see \cite{BroomheadPauksztelloPloog13}).

\begin{definition}
The category $\Db (\mod \Lambda)$ is called \emph{discrete} if for every function $v: \bZ \to K_0(\mod \Lambda)$, there are only finitely many isomorphism classes $X \in \Db (\mod \Lambda)$ such that $[H^i(X)] = v(i) \in K_0(\mod \Lambda)$ for all $i \in \bZ$.  We will call such an algebra $\Lambda$ \emph{derived discrete}.
\end{definition}

\begin{remark}
If $\Lambda$ and $\Gamma$ are finite-dimensional algebras such that $\Db (\mod \Lambda) \cong \Db (\mod \Gamma)$, then $\Lambda$ is derived discrete if and only if $\Gamma$ is derived discrete.
\end{remark}

In \cite[2.1 Theorem]{Vossieck01}, the derived discrete algebras have been classified.  It is shown in \cite[Theorem A]{BobinskiGrzegorzGeiss04} that every such algebra $\Lambda$ is either derived equivalent to the category of representations of a Dynkin quiver, or derived equivalent to $\Lambda(r,n,m) = kQ(r,n,m) / I(r,n,m)$ where $kQ(r,n,m)$ is the path algebra over the quiver $Q(r,n,m)$ and $I(r,n,m)$ is a suitably chosen ideal of $kQ(r,n,m)$ (see Figure \ref{fig:DerivedDiscrete}).  Here, $m \geq 0$ and $n \geq r \geq 1$.  The algebra $\Lambda(r,n,m)$ has finite global dimension if and only if $n > r$.

\begin{figure}
	\centering
		\includegraphics[width=0.80\textwidth]{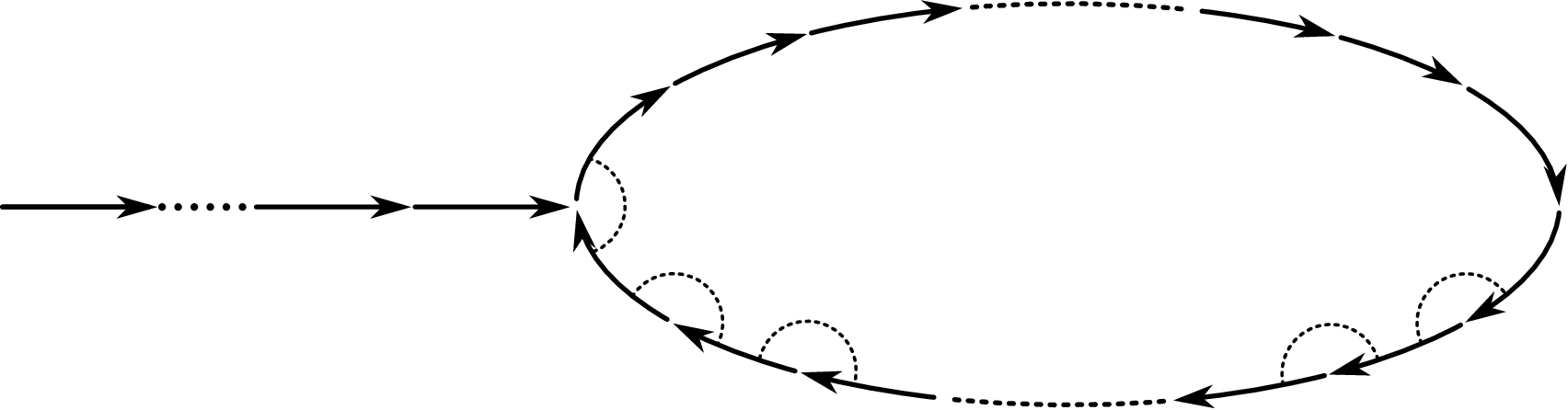}
	\caption{The quiver $Q(r,n,m)$.  Here, the tail has $m$ vertices, the oriented cycle has $n$ vertices.  The ideal $I(r,n,m)$ is given by $r$ consecutive quadratic relations in the cycle, the last one over the vertex to which the tail is attached.}
	\label{fig:DerivedDiscrete}
\end{figure}

The following result is a corollary of the description of the bounded $t$-structures on $\Db(\mod \Lambda)$ given in \cite{BroomheadPauksztelloPloog13} and Proposition \ref{proposition:FinitelyGeneratedAisle}.

\begin{corollary}\label{corollary:DerivedDiscrete}
Let $\Lambda$ be a finite-dimensional algebra of finite global dimension over an algebraically closed field.  Assume that $\Lambda$ is derived discrete.  A $t$-structure $(\UU, \VV)$ on $\Db(\mod \Lambda)$ induces a triangle equivalence $\Db \HH(\UU) \to \Db \mod \Lambda$ if and only if $(\UU, \VV)$ is bounded and $\bS \UU \subseteq \UU$.
\end{corollary}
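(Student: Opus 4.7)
The plan is to reduce to Proposition \ref{proposition:FinitelyGeneratedAisle} by invoking the classification of bounded $t$-structures for derived discrete algebras from \cite{BroomheadPauksztelloPloog13}. The two directions are asymmetric, but neither is difficult once the right machinery is cited.

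For the ``only if'' direction, I would simply apply Corollary \ref{corollary:Needed}: since $\Db \mod \Lambda$ is Hom-finite with Serre duality (here we use that $\Lambda$ has finite global dimension, so the Serre functor exists on the whole bounded derived category), any $t$-structure that induces a triangulated equivalence $\Db \HH(\UU) \to \Db \mod \Lambda$ is automatically bounded with $\bS \UU \subseteq \UU$. This direction does not use the hypothesis that $\Lambda$ is derived discrete.

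For the ``if'' direction, suppose $(\UU,\VV)$ is bounded and $\bS \UU \subseteq \UU$. The key input is the description of bounded $t$-structures on $\Db(\mod \Lambda)$ given in \cite{BroomheadPauksztelloPloog13}: when $\Lambda$ is derived discrete (as classified in \cite{Vossieck01, BobinskiGrzegorzGeiss04}), every bounded $t$-structure on $\Db(\mod \Lambda)$ is finitely generated, i.e.\ the aisle $\UU$ is the smallest preaisle containing some partial silting object $E \in \Db \mod \Lambda$. Granting this, the hypotheses of Proposition \ref{proposition:FinitelyGeneratedAisle} are satisfied, and that proposition immediately yields a triangulated equivalence $\Db \HH(\UU) \to \Db \mod \Lambda$.

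The main (and essentially only) obstacle is justifying the step ``bounded implies finitely generated'' in the derived discrete setting; but this has already been isolated as the content of \cite{BroomheadPauksztelloPloog13} as discussed in the introduction to \S\ref{section:DerivedDiscrete}, so the proof reduces to citing that result together with Proposition \ref{proposition:FinitelyGeneratedAisle}.
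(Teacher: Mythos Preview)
Your proposal is correct and matches the paper's own proof essentially verbatim: the paper also notes that finite global dimension gives a Serre functor, cites \cite{BroomheadPauksztelloPloog13} for the fact that all $t$-structures on $\Db(\mod \Lambda)$ are finitely generated in the derived discrete case, and then invokes Proposition \ref{proposition:FinitelyGeneratedAisle}. The only cosmetic difference is that the paper does not separate out the ``only if'' direction, since Proposition \ref{proposition:FinitelyGeneratedAisle} already contains both implications.
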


\begin{proof}
Since $\Lambda$ has finite global dimension, we know that $\Db (\mod \Lambda)$ has a Serre functor.  It is shown in \cite{BroomheadPauksztelloPloog13} that all $t$-structures on $\Db(\mod \Lambda)$ are finitely generated; Proposition \ref{proposition:FinitelyGeneratedAisle} then yields the required result.
\end{proof}

\section{Projective objects in the heart}
Let $\AA$ be an abelian hereditary category with Serre duality, and let $(\UU, \VV)$ be a bounded $t$-structure on $\Db \AA$.  In general, a $\UU$-projective object does not lie in the heart $\HH(\UU)$.  The aim of this section is proving Proposition \ref{proposition:SimpleTop}, where it is shown, under the additional condition $\UU \subseteq \bS \UU$, that $\UU$-projective objects are projective objects in the heart $\HH(\UU)$ and that any indecomposable $\UU$-projective object has a simple top.  These simple tops will play a further role in \S\ref{section:Reduction} where we will consider their perpendicular subcategories.

We start by recalling some definitions.  Let $\BB$ be any abelian category and let $f: A \to C$ and $g: B \to C$ be any two morphisms in $\BB$.  We write $f \sim_C g$ if there are morphisms $h_1: A \to B$ and $h_2: B \to A$ such that $f = g \circ h_1$ and $g = f \circ h_2$.  This defines an equivalence relation on morphisms ending in $C$.  A morphism $f: A \to C$ is called \emph{right minimal} if, for every $h \in \End A$, $f = f \circ h$ implies that $h$ is an automorphism.

The following proposition is a straightforward adaptation of \cite[Proposition I.2.1]{ARS}.

\begin{proposition}\label{proposition:Minimal}
Let $\BB$ be an abelian category and let $f: A \to C$ be any morphism.  If $\dim_k \End A < \infty$, then here is a right minimal morphism $g: B \to C$ such that $f \sim_C g$.
\end{proposition}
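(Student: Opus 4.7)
My plan is to adapt the classical argument behind \cite[Proposition I.2.1]{ARS} to an abstract abelian category. First, I consider the collection of all pairs $(B,g)$ where $B$ is a direct summand of $A$, $g := f|_B$, and $f \sim_C g$; the trivial pair $(A,f)$ shows the collection is nonempty. Each inclusion of a direct summand gives an embedding of $k$-algebras $\End B \hookrightarrow \End A$ (extending by zero on a complement), so $\dim_k \End B$ is bounded above by $\dim_k \End A$, and I may select a pair with this dimension minimal. The claim is that the selected $g : B \to C$ is then right minimal.

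To verify right-minimality, take $h \in \End B$ with $g \circ h = g$; I must show $h$ is an isomorphism. The key ingredient is a Fitting-type decomposition $B = \ker h^m \oplus \im h^m$, obtained as follows: the commutative subalgebra $k[h] \subseteq \End B$ is finite-dimensional, so $h$ has a minimal polynomial which factors as $x^m r(x)$ with $r(0) \neq 0$; B\'ezout in $k[x]$ produces polynomials $a,b$ with $a(x) x^m + b(x) r(x) = 1$, and the elements $e_1 := a(h) h^m$ and $e_2 := b(h) r(h)$ in $\End B$ are orthogonal idempotents summing to $1$. Since idempotents split in the abelian category $\BB$, this yields the desired decomposition with $\im e_1 = \im h^m$, $\im e_2 = \ker h^m$, and $h^m$ restricting to an isomorphism on $\im h^m$. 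This Fitting-style step is the main technical obstacle, since $B$ has no a priori finite-length hypothesis in $\BB$; everything depends on translating the finite-dimensionality of $\End B$ into a concrete splitting of $B$ inside the abelian category.

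The rest is routine. Write $K := \ker h^m$, $B' := \im h^m$, with the obvious inclusions $i,j$ and projections $p,q$. The hypothesis $g \circ h = g$ iterates to $g \circ h^m = g$ and hence $g \circ j = g \circ h^m \circ j = 0$, so $g = g \circ (ip + jq) = g' \circ p$ with $g' := g \circ i : B' \to C$, giving $f \sim_C g'$. If $h$ were not an isomorphism, then $K$ would have to be nonzero (otherwise $h^m$ is invertible on $B$, and then so is $h$, since $h^{-1}$ can be expressed as $h^{m-1} a(h)$). In that case $B'$ is a proper direct summand of $B$, and $\dim_k \End B' < \dim_k \End B$ because $\End B$ contains $\End B' \oplus \End K$ as a $k$-subspace with $\dim_k \End K \geq 1$. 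This contradicts the minimal choice of $B$, so $h$ is an isomorphism and $g$ is right minimal.
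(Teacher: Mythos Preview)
Your argument is correct. The paper takes a somewhat different and shorter route: it minimises $\dim_k \End B$ over \emph{all} morphisms $g:B\to C$ with $f\sim_C g$ (not just restrictions of $f$ to direct summands of $A$), and for $h\in\End B$ with $gh=g$ it works directly with the image factorisation $B\twoheadrightarrow\im h\hookrightarrow B$. One checks that $g|_{\im h}\sim_C g$, and that $\varphi\mapsto i\circ\varphi\circ\pi$ defines a $k$-linear injection $\End(\im h)\hookrightarrow\End B$; minimality then forces this to be a bijection, so some $\varphi$ satisfies $i\varphi\pi=1_B$, whence $i$ and $\pi$ (and hence $h=i\pi$) are isomorphisms. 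No Fitting decomposition or polynomial manipulation is needed. Your approach, by contrast, stays within the class of direct summands of $A$ and uses the finite-dimensionality of $k[h]$ to split $B$ explicitly as $\ker h^m\oplus\im h^m$; this is a more hands-on argument, and it has the minor advantage that the minimal $B$ you produce is visibly a summand of the original $A$, but at the cost of the extra algebraic step of extracting idempotents from the minimal polynomial.
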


\begin{proof}
Let $g: B \to C$ be a morphism such that $f \sim_C g$ and assume that $g$ has been chosen so that $\dim_k \End B < \infty$ is minimal with this property.  This can be done since $\dim_k \End A < \infty$.  We will show that $g: B \to C$ is right minimal.

For any $h \in \End B$ such that $g = g \circ h$, we get a commutative diagram:
\newdir{ (}{{}*!/-5pt/@^{(}}
$$\xymatrix{
B \ar@{->>}[r]^-{\pi} \ar[d]^{g} & \im h \ar@{{ (}->}[r]^-{i} \ar[d]^{g|_{\im h}} & B \ar[d]^g \\
C \ar@{=}[r] & C \ar@{=}[r]  & C
}$$
This gives a linear transformation $\End (\im h) \to \End B: \varphi \mapsto i \circ \varphi \circ \pi.$  Since $i$ and $\pi$ are a monomorphism and an epimorphism, respectively, the map $\End (\im h) \to \End B$ is an injection.  Using the minimality of $\dim_k \End B$, we find that $\End (\im h) \to \End B$ is an isomorphism, and thus there is a $\varphi \in \End(\im h)$ such that $i \circ \varphi \circ \pi = 1_B$.  This shows that $\pi$ and $i$ are isomorphisms and hence so is $i \circ \pi = h$.  We conclude that $g: B \to C$ is right minimal.
\end{proof}

We will use the following characterization of right minimal morphisms.

\begin{proposition}\label{proposition:EquivalentMinimal}
Let $\BB$ be an abelian category.  The following are equivalent for a morphism $f: A \to C$ where $\dim_k \End A < \infty$:
\begin{enumerate}
\item the morphism $f: A \to C$ is right minimal, and
\item for any nonzero direct summand $A'$ of $A$, the restriction $f|_{A'}: A' \to C$ of $f$ to $A'$ is nonzero.
\end{enumerate}
\end{proposition}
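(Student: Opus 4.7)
The plan is to prove each implication separately, with the harder direction requiring a Fitting-style decomposition of the offending endomorphism.

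For (1) $\Rightarrow$ (2), I would argue the contrapositive directly. If $A = A' \oplus A''$ with $A' \neq 0$ and $f|_{A'} = 0$, then the idempotent $e \in \End A$ that projects onto $A''$ along $A'$ satisfies $f \circ e = f$ (since $f$ already vanishes on $A'$), yet $e$ is not an automorphism because $A' \neq 0$; hence $f$ is not right minimal. This direction is the essentially trivial one.

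The interesting direction is (2) $\Rightarrow$ (1), which I would also prove by contrapositive. Suppose $f$ is not right minimal, so there is some $h \in \End A$ with $f \circ h = f$ but $h$ not invertible. Since $\dim_k \End A < \infty$, the element $h$ satisfies a minimal polynomial over $k$, which factors as $p(x) = x^n q(x)$ with $q(0) \neq 0$; the case $n = 0$ would give $q(h) = 0$ with $q(0) \neq 0$, producing an inverse for $h$ in $k[h]$, contrary to our assumption, so $n \geq 1$. By Bezout, choose $r, s \in k[x]$ with $x^n r(x) + q(x) s(x) = 1$, and set $e_0 = q(h) s(h)$, $e_1 = h^n r(h)$. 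A direct calculation using $p(h) = 0$ shows that $e_0, e_1 \in \End A$ are orthogonal idempotents summing to the identity. Since idempotents split in any abelian category, this induces a direct sum decomposition $A = A_0 \oplus A_1$, preserved by $h$ because $h$ commutes with $e_0, e_1$. On $A_0$, we have $h^n e_0 = p(h) s(h) = 0$, so $h^n|_{A_0} = 0$; on $A_1$, we have $q(h) e_1 = p(h) r(h) = 0$, and since $q(0) \neq 0$, this relation together with commutativity inside $k[h]$ produces an explicit two-sided inverse for $h|_{A_1}$, so $h|_{A_1}$ is an automorphism. Since $h$ itself is not an automorphism of $A$, we must have $A_0 \neq 0$. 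Iterating $f \circ h = f$ gives $f \circ h^n = f$, and restricting to $A_0$ yields $f|_{A_0} = f|_{A_0} \circ h^n|_{A_0} = 0$, exhibiting a nonzero direct summand on which $f$ vanishes and contradicting (2).

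The main subtlety will be ensuring that the Fitting-type idempotents $e_0, e_1$ genuinely produce a direct sum decomposition of $A$ inside the abelian category $\BB$, rather than only a decomposition in some additive completion. This is handled by the standard fact that idempotents split in any abelian category, so no Krull--Schmidt or Karoubian assumption on $\BB$ is actually needed beyond abelianness. The remaining steps, verifying nilpotency on $A_0$ and invertibility on $A_1$, are routine polynomial manipulations inside the finite-dimensional subalgebra $k[h] \subseteq \End A$.
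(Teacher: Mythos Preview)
Your argument is correct, and the Fitting-decomposition approach you take is genuinely different from the paper's route. The paper does not give a self-contained proof here: it simply invokes \cite[Corollary I.2.3]{ARS}, with the one modification that the existence of a right minimal representative in the $\sim_C$-class is supplied by the paper's own Proposition \ref{proposition:Minimal} rather than by \cite[Proposition I.2.1]{ARS}. The underlying ARS argument for (2) $\Rightarrow$ (1) runs as follows: pick a right minimal $g: B \to C$ with $f \sim_C g$; the relations $f = g h_1$ and $g = f h_2$ force $h_1 h_2 \in \Aut B$, so $h_2$ is a split monomorphism and $A \cong B \oplus A'$; one then checks that $f$ vanishes on the complement $A'$, so if (2) holds then $A' = 0$, whence $f$ and $g$ differ by an isomorphism and $f$ is right minimal.

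By contrast, you bypass Proposition \ref{proposition:Minimal} entirely and work directly with the offending endomorphism $h$: the finite-dimensionality of $\End A$ gives a minimal polynomial, and the Bezout idempotents produce the Fitting splitting $A = A_0 \oplus A_1$ inside $\BB$. This is more constructive and self-contained; the paper's approach is more modular, reusing the ``minimal representative'' machinery that is already set up for other purposes. Your remark that idempotent splitting in an abelian category is all that is needed (no Krull--Schmidt hypothesis) is exactly right and is the only point where one might worry about the categorical generality.
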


\begin{proof}
This is exactly \cite[Corollary I.2.3]{ARS} where \cite[Proposition I.2.1]{ARS} is replaced by Proposition \ref{proposition:Minimal}.
\end{proof}

\begin{remark}
Proposition \ref{proposition:EquivalentMinimal} holds under the weaker assumption that $\dim_k \End_{\BB / C}(f) < \infty$ instead of $\dim_k \End_\BB A < \infty$; the proof is identical.  Here, $\BB / C$ is the slice category of $\BB$ over $C$.
\end{remark}

A morphism $f: A \to C$ is called \emph{right almost split} if $f$ is not a split epimorphism and any morphism $B \to C$ either factors through $f: A \to C$ or is a split epimorphism.  A morphism which is both right minimal and right almost split is called \emph{minimal right almost split}.

We now come to the main result of this section.

\begin{proposition}\label{proposition:SimpleTop}
Let $\AA$ be an abelian category with Serre duality, and let $(\UU, \VV)$ be a bounded $t$-structure in $\Db \AA$.  Assume furthermore that $\bS \UU \subseteq \UU$.  Let $E \in \UU$ be an indecomposable $\UU$-projective, and let $\tau E \to M \to E \to \bS E$ be the Auslander-Reiten triangle built on $E$.  We have the following:
\begin{enumerate}
\item\label{Number1} $E, \bS E \in \HH(\UU)$,
\item\label{Number2} for all $n \not= 0$, we have $\Ext^n_{\Db \AA} (E, \HH(\UU)) = 0$ and $\Ext^n_{\HH(\UU)} (E, \HH(\UU)) = 0$,
\item\label{Number3} for all $n \not= 0$, we have $\Ext^n_{\Db \AA} (\HH(\UU), \bS E) = 0$ and $\Ext^n_{\HH(\UU)} (\HH(\UU), \bS E) = 0$,
\item\label{Number4} the composition $M_\UU \to M \to E$ is minimal right almost split in $\HH(\UU)$,
\item\label{Number5} the object $S_E \cong E / M_\UU$ is simple in $\HH(\UU)$,
\item\label{Number6} for any $X \in \HH(\UU)$, we have that $\Hom(E,X) = 0$ implies $\Hom(X,S_E) = 0$,
\item\label{Number7} for any $X \in \HH(\UU)$, we have that $\Hom(X,\bS E) = 0$ implies $\Hom(S_E,X) = 0$,
\item\label{Number8} if $\AA$ is hereditary, then $\Hom_{\Db \AA}(S_E,S_E[n]) = 0$ for all $n \not= 0$.
\end{enumerate}
\end{proposition}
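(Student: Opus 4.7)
The plan proceeds through the eight items in order, with (\ref{Number8}) being the main obstacle. For (\ref{Number1})--(\ref{Number3}), the strategy is to translate each Hom-vanishing into the $\UU$-projectivity of $E$ via Serre duality. For $U\in\UU$ and $i\geq 1$, Serre duality gives $\Hom(U,\bS E[-i])\cong\Hom(E,U[i])^{*}=0$; specialising to $i=1$ places $\bS E\in\VV$, and combined with $\bS\UU\subseteq\UU$ this yields $\bS E\in\HH(\UU)$. The same reasoning, applied to $\bS U\in\UU$ in place of $U$, shows $E\in\VV$. The negative-$n$ parts of (\ref{Number2}) and (\ref{Number3}) follow from $\Hom(\UU,\VV[-1])=0$, and the positive-$n$ parts from $\UU$-projectivity of $E$ together with its Serre dual. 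Remark \ref{remark:ExtInHeart} then transports these vanishings to the heart, making $E$ projective and $\bS E$ injective in $\HH(\UU)$.

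For (\ref{Number4}), I would apply the truncation $(-)_{\UU}$ to the AR triangle $\tau E\to M\to E\to\bS E$. Since $\tau E=\bS E[-1]\in\VV[-1]$ by (\ref{Number1}), a short computation with the truncation triangle (using $\Hom(\UU,\VV[-1])=0$) gives $M_{\UU}\in\HH(\UU)$, so the composite $M_{\UU}\to M\to E$ is a morphism in the heart. Right almost splitness: any non-split-epi $X\to E$ with $X\in\HH(\UU)$ factors through $M$ by the AR property, and then through $M_{\UU}$ by the adjunction for the coreflective $\UU\hookrightarrow\Db\AA$. For right minimality, a nonzero summand $A\subseteq M_{\UU}$ with $A\to E=0$ in the heart would make $A\to M$ factor through $\tau E\in\VV[-1]$, but $\Hom(\UU,\VV[-1])=0$ forces $A\to M=0$; the injectivity of $\Hom(A,M_{\UU})\to\Hom(A,M)$ (via the same truncation triangle and Hom-vanishing against $\VV[-2]$) then yields $A\to M_{\UU}=0$, so $A=0$. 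Item (\ref{Number5}) is standard from (\ref{Number4}): a proper subobject $X\subsetneq S_{E}$ pulls back along $E\twoheadrightarrow S_{E}$ to a non-split mono $P\hookrightarrow E$ that must factor through $M_{\UU}$, forcing $X=0$; and $S_{E}\neq 0$ because otherwise $M_{\UU}\to E$ would be split epi, splitting the AR triangle.

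Items (\ref{Number6}) and (\ref{Number7}) are short consequences of $S_{E}$ being simple. For (\ref{Number6}), any nonzero $X\to S_{E}$ is epi, so projectivity of $E$ (from (\ref{Number2})) produces a lift of $E\twoheadrightarrow S_{E}$ through $X$, giving $\Hom(E,X)\neq 0$. For (\ref{Number7}), a nonzero $S_{E}\to X$ precomposed with the epi $E\twoheadrightarrow S_{E}$ is nonzero, and Serre duality $\Hom(X,\bS E)\cong\Hom(E,X)^{*}$ then gives $\Hom(X,\bS E)\neq 0$.

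The hard step is (\ref{Number8}). Since $S_{E}$ is simple in $\HH(\UU)$, Schur's lemma makes $\End_{\HH(\UU)}S_{E}$ a skew field, and by Remark \ref{remark:ExtInHeart} this agrees with $\End_{\Db\AA}S_{E}$, so $S_{E}$ is indecomposable in $\Db\AA$. Hereditariness then forces $S_{E}\in\AA[-n_{0}]$ for a unique $n_{0}$, so $\Hom_{\Db\AA}(S_{E},S_{E}[n])\cong\Ext^{n}_{\AA}(H^{n_{0}}S_{E},H^{n_{0}}S_{E})$ vanishes automatically for $n<0$ or $n\geq 2$. The crux is $n=1$. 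Applying $\Hom_{\HH(\UU)}(-,S_{E})$ to $0\to K\to E\to S_{E}\to 0$ (with $K=\im(M_{\UU}\to E)$) and using projectivity of $E$ from (\ref{Number2}), the connecting map $\Hom(K,S_{E})\to\Ext^{1}_{\HH(\UU)}(S_{E},S_{E})$ is surjective, so it suffices to show $\Hom_{\HH(\UU)}(K,S_{E})=0$, which by (\ref{Number6}) reduces to $\Hom_{\HH(\UU)}(E,K)=0$. For a hypothetical nonzero $h\colon E\to K$, the composite with the inclusion $\iota\colon K\hookrightarrow E$ sits in $\End E$, which by Proposition \ref{proposition:HappelRingel} is a skew field (this is where hereditariness is essential). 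Since $\iota$ is mono, $\iota h\neq 0$, hence $\iota h$ is invertible, making $h$ a split mono whose retraction has kernel $\ker\iota=0$; therefore $h$ is an isomorphism, $K\cong E$, and $E\to S_{E}=0$, contradicting $S_{E}\neq 0$.
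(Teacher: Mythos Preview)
Your proof is correct and follows essentially the same route as the paper's. The Serre-duality manipulations for (\ref{Number1})--(\ref{Number3}), the AR-triangle analysis for (\ref{Number4})--(\ref{Number5}), the projectivity/injectivity arguments for (\ref{Number6})--(\ref{Number7}), and the skew-field argument via Proposition~\ref{proposition:HappelRingel} for (\ref{Number8}) all match the paper's strategy; your use of $K=\im(M_\UU\to E)$ rather than $M_\UU$ itself in (\ref{Number8}) is a harmless precaution (in fact $M_\UU\to E$ is already a monomorphism, as one sees from the long exact $H^\bullet_\UU$-sequence of the triangle $M\to E\to\bS E$), and your direct argument that $\Hom(E,K)=0$ is logically equivalent to the paper's proof by contradiction.
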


\begin{proof}
\begin{enumerate}
  \item We first show that $E \in \HH(\UU)$.  We know that $E \in \UU$ so that we only need to show that $E \in \UU^{\perp_{-1}}$.  We have
$$\Hom(\UU,E[-1]) \cong \Hom(E, \bS \UU[1])^* = 0,$$
where the first isomorphism is by Serre duality, and the last equality follows from $\bS \UU \subseteq \UU$ and $\Hom(E,\UU[1]) = 0$.

  Next, we will show that $\bS E \in \HH(\UU)$.  Since $\bS \UU \subseteq \UU$, we know that $\bS E \in \UU$.  To show that $\bS E \in \HH(\UU)$, we need to show that $\Hom(X[1],\bS E) = 0$, for all $X \in \UU$.  This follows from Serre duality and $\Hom(E, X[1]) = 0$.

  \item Since $E$ is $\UU$-projective and $\HH(\UU) \subset \UU$, we have $\Ext^n_{\Db \AA} (E, \HH(\UU)) = 0$, for all $n > 0$.  It follows from the definition of $\HH(\UU)$ that $\Ext^n_{\Db \AA} (E, \HH(\UU)) = 0$ for $n < 0$.
  
  Furthermore, by Remark \ref{remark:ExtInHeart} we have $\Ext^1_{\Db \AA}(E,\HH(\UU)) \cong \Ext^1_{\HH(\UU)}(E,\HH(\UU))$.  Since the left-hand side is zero, and we may conclude that $E \in \HH(\UU)$ is projective in $\HH(\UU)$.  Hence $\Ext^n_{\HH(\UU)}(E,\HH(\UU)) = 0$ for all $n \not= 0$.

\item Using Serre duality, we find
\begin{eqnarray*}
\Ext^n_{\Db \AA} (\HH(\UU), \bS E) &=& \Hom_{\Db \AA} (\HH(\UU), \bS E[n]) \\
&\cong& \Hom_{\Db \AA} (E, \HH(\UU)[-n])^* \\
&\cong& \Ext^{-n}_{\Db \AA} (E, \HH(\UU))^* \\
\end{eqnarray*}
so that $\Ext^n_{\Db \AA} (\HH(\UU), \bS E) = 0$ for $n \not= 0$ by (\ref{Number2}).  Following Remark \ref{remark:ExtInHeart}, we have $\Ext^1_{\Db \AA}(\HH(\UU), \bS E) \cong \Ext^1_{\HH(\UU)}(\HH(\UU), \bS E)$, so that $\bS E \in \HH(\UU)$ is injective in $\HH(\UU)$ and consequently $\Ext^n_{\HH(\UU)} (\HH(\UU), \bS E) = 0$ for all $n \not= 0$.

\item First, we show that $M_\UU \in \HH(\UU)$.  We only need to check that $\Hom(X[1],M_\UU) = 0$, for all $X \in \UU$.  Since $X[1] \in \UU$, we find that $\Hom(X[1],M_\UU) \cong \Hom(X[1],M)$. Applying $\Hom(X[1],-)$ to the triangle defining $M$ shows it suffices to prove that $\Hom(X[1],E) = 0 = \Hom(X[1], \tau E)$.  The first equality follows from $E \in \HH(\UU)$, and the second equality follows from $\Hom(X[1], \tau E) \cong \Hom(X[2], \bS E) = 0$ together with $\bS E \in \HH(\UU)$.

We will use Proposition \ref{proposition:EquivalentMinimal} to show that $M_\UU \to E$ is right minimal.  Let $X \in \HH(\UU)$ be a direct summand of $M_\UU$ such that the composition $X \to M_\UU \to E$ is zero.  Using the triangle defining $M$, we find that the composition $X \to M_\UU \to M$ factors as $X \to \tau E \to M$.  It now follows from $\Hom(X, \tau E) \cong \Hom(X[1], \bS E) = 0$ (recall that $\bS E \in \HH(\UU)$) that $X \to M_\UU \to M$ is zero, and hence the embedding $X \to M_\UU$ is zero.  We conclude that $X \cong 0$, which shows that $M_\UU \to E$ is indeed right minimal.

To show that $M_\UU \to E$ is right almost split, we first show it is nonsplit.  If the map $M_\UU \to E$ were a split epimorphism, then the map $M \to E$ would be split as well, contradicting that $\tau E \to M \to E \to \bS E$ is an Auslander-Reiten triangle.  Next, let $X \to E$ be any nonsplit morphism in $\HH(\UU)$.  Using the properties of Auslander-Reiten triangles, we find that this morphism factors as
$$X \to M_\UU \to M \to E,$$
and hence $M_\UU \to E$ is right almost split.

\item The proof that $S_E$ is simple is standard.  We give the proof for the convenience of the reader.  Let $T$ be a quotient object of $S_E$.  We find a commutative diagram
$$\xymatrix{0 \ar[r] & M_\UU \ar@{-->}[d]\ar[r] & E \ar[r] \ar@{=}[d] & S_E \ar[r] \ar[d] & 0  \\
0 \ar[r] & K \ar[r] & E \ar[r] & T \ar[r] & 0}$$
It was shown in (\ref{Number4}) that the map $M_\UU \to E$ is right almost split, thus either $K \to E$ is a split epimorphism or $K \to E$ factors as $K \to M_\UU \to E$.  In the former case we find that $K \to E$ is an isomorphism (and hence $T \cong 0$), and in the latter case we can use that $M_\UU \to E$ is right minimal to find that the composition $M_\UU \to K \to M_\UU$ is an automorphism of $M_\UU$, implying that $T \cong S_E$.  This shows that $S_E$ is simple.

\item This property follows from the lifting property for projectives.
\item This property follows from the lifting property for injectives.
\item Assume now that $\AA$ is hereditary.  Since $S_E$ is simple in $\HH(\UU)$ we know that it is indecomposable, and since $\AA$ is hereditary there is an $i \in \bZ$ such that $S_E \in \AA[i]$.  Again, using that $\AA$ is hereditary, we know that $\Hom_{\Db \AA}(S_E,S_E[n]) = 0$, for all $n \not\in \{0,1\}$.  We thus only need to show that $\Hom_{\Db \AA}(S_E,S_E[1]) = 0$.  By Remark \ref{remark:ExtInHeart}, this is equivalent to showing that $\Ext^1_{\HH(\UU)}(S_E,S_E) = 0$.

Consider the short exact sequence $0 \to M_\UU \to E \to S_E \to 0$ in $\HH(\UU)$.  Applying the functor $\Hom_{\HH(\UU)}(-,S_E)$ and using (\ref{Number2}) to see that $\Ext^1_{\HH(\UU)}(E,S_E) = 0$, we find the exact sequence
$$0 \to \Hom(S_E,S_E) \to \Hom(E,S_E) \to \Hom(M_\UU, S_E) \to \Ext^1(S_E,S_E) \to 0.$$
Seeking a contradiction, assume that $\Ext^1_{\HH(\UU)}(S_E, S_E) \not= 0$.  This implies that $\Hom(M_\UU, S_E) \not= 0$ and by (\ref{Number6}) above thus also that $\Hom(E, M_\UU) \not= 0$.

However, recall that $E$ is $\UU$-projective and thus $\Ext^1(E,E) = 0$, so that Proposition \ref{proposition:HappelRingel} shows that every nonzero element in $\Hom(E,E)$ is invertible.  If the composition $E \to M_\UU \to E$ were nonzero, then it would be invertible.  In particular, the map $M_\UU \to E$ would be a split epimorphism, contradicting (\ref{Number4}) above.  We conclude that the composition $E \to M_\UU \to E$ is zero.  However, this contradicts that $M_\UU \to E$ is a monomorphism together with $\Hom(E, M_\UU) \not= 0$.
\end{enumerate}
\end{proof}

\section{A criterion for derived equivalence}\label{section:Criterion}

In this section, let $\Lambda$ be a finite-dimensional algebra of finite global dimension, and write $\AA$ for the category $\mod \Lambda$ of finite-dimensional right $\Lambda$-modules.  We will consider an aisle $\UU \subseteq \Db \AA$ and ``approximate'' it by a finitely generated aisle $\YY$ (this is made precise in Proposition \ref{proposition:Criterion1} below).  The positive integer $i$ measures how far $\YY$ lies from $\UU$.

Following Proposition \ref{proposition:Weight}, the aisle $\YY$ fits into a weight structure $(\XX, \YY)$ on $\Db \AA$, and we can use this weight structure to factor morphisms of the form $A \to B[n]$ where $A, B \in \HH(\UU)$.

When $\AA$ is hereditary, we can use the description of $t$-structures from \cite{StanleyvanRoomalen12} (see \S\ref{subsection:Classification}) to find such a finitely generated aisle $\YY$ (this will be done in Proposition \ref{proposition:Criterion} below).  This leads to Lemma \ref{lemma:OnlyExt2} where we show that whether $\UU \subseteq \Db \AA$ induces a derived equivalence is only ``controlled'' by morphisms of the form $A \to B[2]$ where $A, B \in \HH(\UU)$.

\begin{proposition}\label{proposition:Criterion1}
Let $\Lambda$ be a finite-dimensional algebra of finite global dimension, and write $\AA$ for $\mod \Lambda$.  Let $(\UU, \VV)$ be a bounded $t$-structure on $\Db \AA$.  If there is a finitely generated and bounded aisle $\YY \subseteq \Db \AA$ and an $i \geq 0$ such that 
\begin{enumerate}
\item $\bS \YY[i] \subseteq \UU \subseteq \YY$, and
\item $\YY[1] \subseteq \UU$,
\end{enumerate}
then every morphism in $\Hom_{\Db \AA}(A,B[n])$ (with $A,B \in \HH(\UU)$ and $n \geq i+2$) factors through an object $X \in \UU[n-i-1] \cap \VV[n-1]$.
\end{proposition}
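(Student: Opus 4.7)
The plan is to combine the weight structure coming from the finitely generated aisle $\YY$ with the $t$-structure $(\UU,\VV)$, using successive truncations to produce the factorization. Since $\YY$ is finitely generated and bounded, Proposition \ref{proposition:Weight} yields a bounded weight structure $(\XX,\YY)$ on $\Db \AA$ with $\XX=\bS^{-1}\YY^{\perp_{-1}}[1]$; Serre duality then identifies $\XX[-1]={}^{\perp_0}\YY$. Given $f\colon A\to B[n]$, I would first apply the shifted weight truncation $(\XX[n-1],\YY[n-1])$ to $B[n]$, obtaining a triangle
\[
W^{-}\to B[n]\to W^{+}\to W^{-}[1]
\]
with $W^{-}\in\XX[n-2]$ and $W^{+}\in\YY[n-1]$.

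The first technical step is to show that the composition $A\to B[n]\to W^{+}$ vanishes, so that $f$ factors as $A\to W^{-}\to B[n]$. By Serre duality $\Hom(A,W^{+})\cong\Hom(W^{+},\bS A)^{*}$, and one intends to derive the vanishing of the right-hand side from the fact that $\bS A\in\bS\UU\subseteq\UU[-i]$ (using $\UU\subseteq\YY$ and $\bS\YY[i]\subseteq\UU$) together with the constraint that $W^{+}\in\YY[n-1]\subseteq\UU[n-2]$, which comes from $\YY[1]\subseteq\UU$. The hypothesis $n\geq i+2$ is exactly what gives room to play the two shift conditions against each other.

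Having factored $f$ as $A\to W^{-}\to B[n]$, I would then apply the shifted $t$-truncation $(\UU[n-i-1],\VV[n-i-2])$ to $W^{-}$, producing
\[
\tau_{\UU[n-i-1]}W^{-}\to W^{-}\to\tau^{\VV[n-i-2]}W^{-}\to\tau_{\UU[n-i-1]}W^{-}[1],
\]
and show that the composition $A\to W^{-}\to\tau^{\VV[n-i-2]}W^{-}$ vanishes; the extra information one uses here is the special shape $W^{-}\in\XX[n-2]={}^{\perp_0}\YY[n-1]$, together with $A\in\VV$ and the Serre-shift bound on $\bS A$. Setting $X:=\tau_{\UU[n-i-1]}W^{-}$, this yields the desired factoring $f\colon A\to X\to W^{-}\to B[n]$ with $X\in\UU[n-i-1]$. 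It remains to verify $X\in\VV[n-1]$, which I would reduce to the aisle inclusion $\XX[n-2]\subseteq\VV[n-1]$; this last containment should follow by Serre duality from the identity $\XX[-1]={}^{\perp_0}\YY$ combined with $\bS\YY[i]\subseteq\UU$.

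The hard part will be the two Hom-vanishings (for $W^{+}$ and for the composition with $\tau^{\VV[n-i-2]}W^{-}$) and the aisle inclusion $\XX[n-2]\subseteq\VV[n-1]$. Each of these requires carefully tracking how the Serre functor interacts with both aisles $\UU$ and $\YY$; the margin $n\geq i+2$ is precisely what ensures that the bounds $\bS\YY[i]\subseteq\UU$ and $\YY[1]\subseteq\UU$ combine to force the required vanishings.
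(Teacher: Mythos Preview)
Your first vanishing step does not go through with the weight structure $(\XX,\YY)$ you have chosen. You want $\Hom(A,W^{+})=0$ for $W^{+}\in\YY[n-1]$, and you argue via Serre duality that it suffices to have $\Hom(W^{+},\bS A)=0$; you then record $W^{+}\in\UU[n-2]$ and $\bS A\in\UU[-i]$. But both of these are containments in shifts of the \emph{aisle} $\UU$, and two objects lying in (shifts of) an aisle need not be Hom-orthogonal in either direction. Concretely, take $i=0$, $\YY=\UU$, $n=2$: then $W^{+}\in\UU[1]$ and $\Hom(A,\UU[1])$ is typically nonzero for $A\in\HH(\UU)$ (it computes $\Ext^{1}$ in the heart). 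So the factorization $A\to W^{-}\to B[n]$ cannot be obtained this way.

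The paper fixes this by using the \emph{Serre-shifted} weight structure: one truncates $B[n]$ with respect to $(\bS\XX[n],\bS\YY[n])$, obtaining a triangle $X\to B[n]\to Y\to X[1]$ with $Y\in\bS\YY[n]$ and $X\in\bS\XX[n-1]$. The point is that $\Hom(A,\bS\UU[1])\cong\Hom(\UU[1],A)^{*}=0$ because $A\in\VV$, and then $\YY[1]\subseteq\UU$ gives $\bS\YY[n]\subseteq\bS\UU[1]$ for $n\geq 2$, so $\Hom(A,Y)=0$ and $f$ factors through $X$. With this choice, your proposed second $t$-truncation is unnecessary: from the triangle one reads off directly that $X\in\UU[n-i-1]$ (using $Y[-1]\in\bS\YY[n-1]\subseteq\UU[n-i-1]$ and $B[n]\in\UU[n]$), and $X\in\VV[n-1]$ follows from $\Hom(X,\bS\YY[n])=0$ together with $\UU\subseteq\YY$ and Serre duality. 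In short, the missing idea is to apply $\bS$ to the weight structure before truncating; once you do this, the argument becomes a single weight decomposition rather than a two-step truncation, and all the vanishings are immediate.
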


\begin{proof}
Let $A,B \in \HH(\UU)$ and assume that $\Hom_{\Db \AA} (A,B[n]) \not= 0$ for some $n \geq i+2$.  Since $\YY$ is finitely generated, so is $\bS \YY[n]$.  By Proposition \ref{proposition:Weight}, there is a bounded weight structure $(\bS \XX[n], \bS \YY[n])$ on $\Db \AA$, and thus there is a triangle
$$X \to B[n] \to Y \to X[1]$$
where $B[n] \to Y$ is a left $\bS \YY[n]$-approximation (here, $X \in \bS \XX[n-1]$).  Using this triangle, we will show that $X \in \bS \XX[n-1]$ is the object in the statement of the lemma.

We start by showing that the morphism $A \to B[n]$ factors through $X$.  Since $A \in \HH(\UU)$, we know that $\Hom(A,\bS \UU [1]) \cong \Hom(\UU[1],A)^* = 0$.  Moreover, we have assumed that $\YY[1] \subseteq \UU$ and $n \geq 2$, so that $\Hom(A, \bS \YY [n]) = 0$.  Since $Y \in \bS \YY[n]$, this shows that the composition $A \to B[n] \to Y$ is zero and thus, by using the triangle above, the map $A \to B[n]$ factors as $A \to X \to B[n]$.

Next, we need to show that $X \in \UU[n-i-1] \cap \VV[n-1]$.  Note that
\begin{align*}
Y[-1] &\in \bS \YY[n-1] \subseteq \UU[n-i-1], \\
B[n] &\in \UU[n] \subseteq \UU[n-i-1].
\end{align*}
This implies that $X \in \UU[n-i-1]$.  Since $\Hom(\bS \XX[n-1], \bS \YY[n]) = 0$ and $X \in \bS \XX[n-1]$, we know that $\Hom(X,\bS \YY[n]) = 0$ and thus also $\Hom(\YY[n],X) = 0$.  Since $\UU[n] \subseteq \YY[n]$, this implies that $\Hom(\UU[n],X) = 0$ and we conclude that $X \in \VV[n-1]$.
\end{proof}

\begin{remark}\label{remark:YYbounded}
The aisle $\YY$ from Proposition \ref{proposition:Criterion1} is bounded since $\UU \subseteq \YY$ and $\YY[1] \subseteq \UU$.
\end{remark}

\begin{remark}\label{remark:YYclosed}
If one can choose $\YY$ as in Proposition \ref{proposition:Criterion1} such that $\bS \YY \subseteq \UU \subseteq \YY$ (thus $i=0$), then $\bS \UU \subseteq \UU$ and $\bS \YY \subseteq \YY$.
\end{remark}

\begin{corollary}\label{corollary:Criterion}
If the conditions of Proposition \ref{proposition:Criterion1} are met for $i = 0$, then:
\begin{enumerate}
  \item the aisle $\UU \subseteq \Db \AA$ induces a triangle equivalence $\Db \HH(\UU) \to \Db \AA$,
  \item the aisle $\YY \subseteq \Db \AA$ induces a triangle equivalence $\Db \HH(\YY) \to \Db \AA$, and
  \item $H_\YY^0 (\UU) \subseteq \HH(\YY)$ is a tilting torsion class.
\end{enumerate}
\end{corollary}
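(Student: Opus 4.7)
The three statements build on each other: (1) uses only Proposition \ref{proposition:Criterion1}, (2) is immediate from earlier results, and (3) leverages (2) via a Serre-duality argument. For (1), my plan is to verify condition (3) of Proposition \ref{proposition:DerivedEquivalence} and then invoke Theorem \ref{theorem:BBD}. Given $f : A \to B[n]$ with $A, B \in \HH(\UU)$ and $n \geq 2$, Proposition \ref{proposition:Criterion1} with $i = 0$ factors $f$ through some object of $\UU[n-1] \cap \VV[n-1] = \HH(\UU)[n-1]$, i.e., through $A_{n-1}[n-1]$ with $A_{n-1} \in \HH(\UU)$. If $n - 1 \geq 2$, I reapply the proposition to the left factor $A \to A_{n-1}[n-1]$, and continue the recursion to obtain a zigzag
\[
A \to A_1[1] \to A_2[2] \to \cdots \to A_{n-1}[n-1] \to B[n]
\]
composing to $f$, which is condition (3) of Proposition \ref{proposition:DerivedEquivalence}. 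Boundedness of $(\UU, \VV)$ together with Theorem \ref{theorem:BBD} then yield the derived equivalence.

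For (2), the aisle $\YY$ is finitely generated by hypothesis, bounded by Remark \ref{remark:YYbounded}, and satisfies $\bS \YY \subseteq \YY$ by Remark \ref{remark:YYclosed}, so Proposition \ref{proposition:FinitelyGeneratedAisle} applies directly.

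For (3), I first identify $\TT := H^0_\YY(\UU)$ with $\UU \cap \HH(\YY)$: for $U \in \UU$ the $\YY$-truncation triangle $U_{\YY[1]} \to U \to H^0_\YY(U) \to U_{\YY[1]}[1]$ has $U_{\YY[1]} \in \YY[1] \subseteq \UU$, whence $H^0_\YY(U) \in \UU$ by extension closure, while the reverse containment is trivial. That $\TT$ is a torsion class in $\HH(\YY)$ is then a routine check: closure under quotients uses that the kernel $K$ of any epimorphism $T \twoheadrightarrow Q$ lies in $\YY$, so $K[1] \in \YY[1] \subseteq \UU$ and $Q$ becomes an extension in $\UU$ of $K[1]$ by $T$; the coreflective structure comes from the $\UU$-truncation triangle $H_\UU \to H \to H^\VV \to H_\UU[1]$ of $H \in \HH(\YY)$, where a short verification using $\YY[1] \subseteq \UU$, $\UU[1] \subseteq \YY$, and $\Hom(\UU, \VV[-1]) = 0$ places both $H_\UU$ and $H^\VV$ in $\HH(\YY)$, with torsionfree part in $\FF := \VV[-1] \cap \HH(\YY)$.

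To show $\TT$ is \emph{tilting}, I invoke the paper's remark that, in a category with enough injectives, a torsion class is tilting iff it contains every injective. By (2) and Proposition \ref{proposition:TiltingObject}, $\YY$ is generated by a tilting object $E$; Proposition \ref{proposition:FiniteGlobalDimension} and the proof of Proposition \ref{proposition:FinitelyGeneratedAisle} identify $\HH(\YY)$ with $\mod \Gamma$ for $\Gamma := \End E$ of finite global dimension, so $\HH(\YY)$ has enough injectives and its indecomposable injectives are the objects $\bS P$ with $P$ an indecomposable projective of $\HH(\YY)$. For such a $P$, the hypothesis $\bS \YY \subseteq \UU$ places $\bS P$ in $\UU$, while Serre duality rewrites $\bS P \in \YY^{\perp_{-1}}$ as $\Hom(P, \YY[1]) = 0$, which holds because $P$ is $\HH(\YY)$-projective and objects of $\YY[1]$ are built from strictly positive shifts of $\HH(\YY)$. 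Hence $\bS P \in \UU \cap \HH(\YY) = \TT$, and $\TT$ contains every indecomposable injective. I expect the main technical hurdle to be the bookkeeping in these $\HH(\YY)$-membership checks, all of which reduce to judicious use of the sandwich $\bS \YY \subseteq \UU \subseteq \YY$ together with $\YY[1] \subseteq \UU$.
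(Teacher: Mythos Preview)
Your proposal is correct and follows essentially the same route as the paper. A few minor differences worth noting: for (2), the paper applies the freshly-proved part (1) with $\YY$ in the role of both $\UU$ and $\YY$, rather than invoking Proposition~\ref{proposition:FinitelyGeneratedAisle}; and for the tilting check in (3), the paper argues more directly that the injective cogenerator $\bS E$ lies in $\UU$ (since $\bS\YY\subseteq\UU$) and hence in $H^0_\YY(\UU)$, without the Serre-duality rewriting. Your argument that $\Hom(P,\YY[1])=0$ via ``$P$ is $\HH(\YY)$-projective and $\YY[1]$ is built from positive shifts of $\HH(\YY)$'' is more roundabout than necessary --- since $P$ is a summand of the generator $E$, it is $\YY$-projective by construction, so $\Hom(P,\YY[1])=0$ is immediate --- but it works once (2) is in hand.
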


\begin{proof}
\begin{enumerate}
\item
It follows from Proposition \ref{proposition:Criterion1} that every morphism $A \to B[n]$ (with $A, B \in \HH(\UU)$, and $n \geq 2$) factors through some $Z \in \HH(\UU)[n-1]$.  The first statement then easily follows from Theorem \ref{theorem:BBD}.

\item
As in Remarks \ref{remark:YYbounded} and \ref{remark:YYclosed}, we see that $\YY$ is bounded and that $\bS \YY \subseteq \YY$ and thus the aisle $\YY \subseteq \Db \AA$ induces an equivalence $\Db \HH(\YY) \to \Db \AA$ by the first part of the proof.

\item 
We will first check that $H_\YY^0 (\UU) \subseteq \HH(\YY)$ is a torsion class (see also \cite[Lemma 3.5]{KingQiu15}).  Since $\HH(\YY) \cong \mod \Gamma$ for a finite-dimensional algebra $\Gamma$ (by Proposition \ref{proposition:FinitelyGeneratedAisle}), we know that $\HH(\YY)$ is noetherian, and hence it suffices to show that $H_\YY^0 (\UU)$ is closed under extensions and quotient objects.  Since $\UU$ is closed under extensions, so is $H_\YY^0(\UU)$ (see Remark \ref{remark:ExtInHeart}).  Thus, let $Z \in H_\YY^0(\UU)$ and consider the short exact sequence
$$0 \to K \to Z \to Q \to 0$$
in $\HH(\YY)$.  Since $K \in \HH(\YY) \subset \YY$, we know that $K[1] \in \YY[1] \subseteq \UU$.  We conclude that $Q[0] \in \UU$ and thus $Q \in H_\YY^0(\UU)$.  This shows that $H_\YY^0 (\UU) \subseteq \HH(\YY)$ is closed under quotient objects.  This establishes that $H_\YY^0 (\UU) \subseteq \HH(\YY)$ is a torsion class.

We will now show that $H_\YY^0 (\UU) \subseteq \HH(\YY)$ is a tilting torsion class.  Recall that there is a finite-dimensional algebra $\Gamma$ such that $\HH(\YY) \cong \mod \Gamma$.  Therefore, $\HH(\YY)$ has a projective generator $E$ so that $\bS E$ is then an injective cogenerator for $\HH(\YY)$.  Since $\bS \YY \subseteq \UU$, we know that $\bS E \in \UU$.  We conclude that $\bS E \in H_\YY^0 (\UU)$.
\end{enumerate}
\end{proof}

\begin{remark}
If the aisle $\UU \subseteq \Db \AA$ is bounded and finitely generated, then Corollary \ref{corollary:Criterion} implies that $\UU \subseteq \Db \AA$ induces a derived equivalence.  In this case, one can choose $\YY = \UU$.  This provides another proof of Proposition \ref{proposition:FinitelyGeneratedAisle}.
\end{remark}

The following example shows that, even when $\Lambda$ is hereditary, one cannot expect to find an aisle $\YY \in \Db \mod \Lambda$ as in Corollary \ref{corollary:Criterion}.

\begin{example}\label{example:NoYY}
We will assume that the reader is familiar with the representation theory of tame hereditary algebras.  Let $\Lambda$ be the path algebra $\bC Q$ where $Q$ is the quiver
$$\xymatrix@R=5pt{& b \ar[rd] \\
a\ar[ru] \ar[rr] && c}$$
The algebra $\Lambda$ is of tame representation type.  We will use the classification from \cite{StanleyvanRoomalen12} (see \S\ref{subsection:Classification}) to describe an aisle $\UU \subseteq \Db \mod \bC Q$.

Let $S_i$ be the simple corresponding to the vertex $i$, where $i$ is $a,b$, or $c$, and let $P_i$ be the projective cover of $S_i$.  Note that $\t^2 S_b \cong S_b$.

We will give an aisle $\UU$ in $\Db \mod \bC Q$ by given the associated refined $t$-sequence (see \S\ref{subsection:Classification}).  We will also write $\NN(n)$ for $H^{-n} (\UU)$.

We start by considering the following wide subcategories of $\mod \bC Q:$
$$\WW(n) = \begin{cases} 0 &\mbox{for $n < 0$} \\ 
  \wide(P_a \oplus P_c) & \mbox{for $n=0$,} \\
 \mod \bC Q & \mbox{for $n>0$.} \end{cases}$$

Note that $\WW(0)$ is equivalent to $\rep_\bC (\xymatrix{\cdot \ar@<-2pt>[r] \ar@<2pt>[r] & \cdot)}$.

For each $n \in \bZ$, we choose a tilting torsion class $t_\WW(n)$ in $\WW(n) \cap {}^\perp \WW(n-1)$.  Note that $\WW(n) \cap {}^\perp \WW(n-1)$ is nonzero only for $n=0$ and $n=1$.

The torsion classes in $\WW(0) \cap {}^\perp \WW(-1) = \WW(0)$ are given in \cite[Theorem 2]{AssemKerner96}.  Let $t_\WW(0)$ be the tilting torsion class given by all preinjectives in $\WW(0) \cap {}^\perp \WW(-1)$.

Since $\WW(1) \cap {}^\perp \WW(0) = \wide (\tau S_b) \cong \mod \bC$, there is only one tilting torsion class: we choose $t_\WW(1) = \WW(1) \cap {}^\perp \WW(0)$.

Proposition \ref{proposition:FinitelyGenerated} yields that $\tau S_b$ is $\NN(1)$-projective, and thus $S_b \not\in \NN(1)$.  Indeed, $\Ext^1(\t S_b, S_b) \cong \Hom(S_b, S_b) \not= 0$.

Assume now the existence of an aisle $\YY$ as in Corollary \ref{corollary:Criterion}.  In particular, $\UU \subseteq \YY$ and thus $\NN(0) \subseteq H^0(\YY)$.  There are two possibilities for $\wide(H^0(\YY))$.

The first possibility is $\wide(H^0(\YY)) = \mod \bC Q$.  In this case, it follows from \cite[Theorem 2]{AssemKerner96} that the finitely generated tilting torsion theories containing $\NN(0)$ must contain $S_b$ or $\tau S_b$.  However, since $\YY[1] \subseteq \UU$ and $\bS \YY \subseteq \UU$, this shows that $S_b \in \NN(1)$.  Contradiction.

The second possibility is $\wide(H^0(\YY)) = \WW(0)$.  Here, we can use \cite[Theorem 2]{AssemKerner96} to see that a finitely generated torsion class containing all preinjective objects contains all regular objects in $\WW(0)$ as well.  Hence, $H^0(\YY)$ contains the middle term $M$ of the Auslander-Reiten sequence
$$0 \to \tau S_b \to M \to S_b \to 0.$$

Using that $\YY[1] \subseteq \UU$, we see that $M \in \NN(1)$ and since $\NN(1) \subseteq \WW(1)$ is a torsion class, we have $S_b \in \NN(1)$.  Contradiction.

We conclude that there can be no finitely generated aisle $\YY$ satisfying the conditions of Corollary \ref{corollary:Criterion}.
\end{example}

\begin{proposition}\label{proposition:Criterion}
Let $\Lambda$ be a finite-dimensional hereditary algebra and let $\UU \subseteq \Db \mod \Lambda$ be a bounded aisle. If $\bS \UU \subseteq \UU$, then there is a finitely generated aisle $\YY \subseteq \Db \mod \Lambda$ satisfying the conditions from Proposition \ref{proposition:Criterion1} for $i=1$.
\end{proposition}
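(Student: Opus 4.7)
\emph{Plan.} The key observation is that for $i=1$, the first condition $\bS \YY[1] \subseteq \UU$ of Proposition \ref{proposition:Criterion1} is automatic as soon as $\YY[1] \subseteq \UU$: indeed, by the hypothesis $\bS \UU \subseteq \UU$, we have $\bS \YY[1] \subseteq \bS \UU \subseteq \UU$. So the problem reduces to constructing a bounded finitely generated aisle $\YY \subseteq \Db \mod \Lambda$ with $\UU \subseteq \YY$ and $\YY[1] \subseteq \UU$, without any further Serre condition on $\YY$. This is precisely the feature that fails in Example \ref{example:NoYY}, where the condition $\bS \YY \subseteq \UU$ was the obstruction; dropping one power of the suspension turns out to be enough.

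The construction will go through the classification of aisles by refined $t$-sequences (Theorem \ref{theorem:StanleyvanRoomalen}). Let $(\WW(-), t_\UU(-))$ be the refined $t$-sequence associated to $\UU$. I will define $\YY$ to be the aisle associated to the refined $t$-sequence $(\WW(-), t_\YY(-))$ with the \emph{same} wide subcategories $\WW(n)$ and with the \emph{maximal} choice of tilting torsion class at each level:
\[
t_\YY(n) \;=\; \WW(n)\cap {}^\perp \WW(n-1).
\]
I first need to check that this really is a refined $t$-sequence: the whole ambient abelian category $\WW(n)\cap {}^\perp\WW(n-1)$ is trivially a tilting torsion class in itself, and $\WW(-)$ is unchanged so the poset condition is inherited from $\UU$. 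Consequently the associated $\YY$ is a genuine aisle.

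Next, three properties of $\YY$ must be verified. First, $\YY$ is bounded because, by Corollary \ref{corollary:FinitelyGeneratedCriterion}, boundedness depends only on $\WW(-)$ which agrees with $\WW_\UU(-)$, and $\UU$ is bounded by hypothesis. Second, $\YY$ is finitely generated: by Remark \ref{remark:FormOfPerpendiculars}, each $\WW(n)\cap{}^\perp\WW(n-1)$ is equivalent to $\mod\Gamma_n$ for a finite-dimensional hereditary algebra $\Gamma_n$, and the regular module $\Gamma_n$ witnesses the finite generation of $t_\YY(n)$ as a torsion class, so Corollary \ref{corollary:FinitelyGeneratedCriterion} applies. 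Third, the inclusions $\UU\subseteq\YY$ and $\YY[1]\subseteq\UU$ both come directly from Proposition \ref{proposition:AislesInclusion}: the inclusion $t_\UU(n)\subseteq t_\YY(n)$ is obvious since $t_\YY(n)$ is the full wide subcategory, and the shift inclusion is automatic since $\WW_\YY=\WW_\UU$.

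The main (mild) obstacle is the bookkeeping of verifying that the maximal choice $t_\YY(n)=\WW(n)\cap{}^\perp\WW(n-1)$ really is a finitely generated \emph{tilting} torsion class, rather than just a torsion class. This reduces to knowing (via Proposition \ref{proposition:Wide}, Proposition \ref{proposition:WidePerpendicular}, and Remark \ref{remark:FormOfPerpendiculars}) that each slice $\WW(n)\cap{}^\perp\WW(n-1)$ is $\mod\Gamma_n$ for a finite-dimensional hereditary $\Gamma_n$, in which the whole module category is trivially a finitely generated tilting torsion class. Once these verifications are assembled, combining $\UU\subseteq\YY$, $\YY[1]\subseteq\UU$, and $\bS\UU\subseteq\UU$ gives both conditions of Proposition \ref{proposition:Criterion1} with $i=1$, completing the proof.
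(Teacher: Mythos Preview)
Your proposal is correct and follows essentially the same approach as the paper: both define $\YY$ via the refined $t$-sequence $(\WW(-), s_\WW(-))$ with $s_\WW(n) = \WW(n)\cap{}^\perp\WW(n-1)$ the maximal tilting torsion class, then invoke Remark~\ref{remark:FormOfPerpendiculars} and Corollary~\ref{corollary:FinitelyGeneratedCriterion} for finite generation and boundedness, and Proposition~\ref{proposition:AislesInclusion} for the inclusions $\UU\subseteq\YY$ and $\YY[1]\subseteq\UU$. Your explicit remark that $\bS\YY[1]\subseteq\UU$ follows automatically from $\YY[1]\subseteq\UU$ and $\bS\UU\subseteq\UU$ is exactly the paper's final step.
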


\begin{proof}
By the description of aisles in $\Db \mod \Lambda$ (see \S\ref{subsection:Classification}), we know that $\UU$ is given by a refined $t$-sequence $(\WW(-), t_\WW(-))$.  Let $s_\WW(n) = \WW(n) \cap {}^\perp \WW(n-1)$, thus whereas $t_\WW(n)$ is any tilting torsion class in $\WW(n) \cap {}^\perp \WW(n-1)$, we have chosen $s_\WW(n)$ to be the maximal one.  Moreover, due to Remark \ref{remark:FormOfPerpendiculars}, we know that $s_\WW(n)$ is finitely generated.

Let $\YY \subseteq \Db \mod \Lambda$ be the aisle associated to $(\WW(-), s_\WW(-))$.  We know that $\YY$ is finitely generated and bounded by Corollary \ref{corollary:FinitelyGeneratedCriterion}.

Recall from Remark \ref{remark:Construction} that $\YY$ is the smallest preaisle in $\Db \mod \Lambda$ which contains $s_\WW(n)[n]$ for all $n \in \bZ$.  In particular, $\UU \subseteq \YY$.  Furthermore, it follows from Proposition \ref{proposition:AislesInclusion} that $\YY[1] \subseteq \UU$, and since $\bS \UU \subseteq \UU$, we have $\bS \YY[1] \subseteq \UU$.
\end{proof}

Following Theorem \ref{theorem:BBD}, we can study derived equivalences of $\mod \Lambda$ by studying morphisms in $\Db \mod \Lambda$ of type $A \to B[n]$ (for $A,B \in \HH(\UU)$) for all $n \geq 2$.  The next lemma explains why we may, in our case, reduce to only considering the case $n=2$.

\begin{lemma}\label{lemma:OnlyExt2}
Let $\Lambda$ be a finite-dimensional hereditary algebra and let $\UU \subseteq \Db \mod \Lambda$ be a bounded aisle satisfying $\bS \UU \subseteq \UU$.  If every morphism $A \to B[2]$ in $\Db \mod \Lambda$ (for $A,B \in \HH(\UU)$) factors as $A \to Z[1] \to B[2]$ (for some $Z \in \HH(\UU)$), then the aisle $\UU \subseteq \Db \mod \Lambda$ induces a derived equivalence.
\end{lemma}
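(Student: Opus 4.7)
The plan is to prove, by induction on $n \geq 1$, the statement $P(n)$: for every $A,B \in \HH(\UU)$ and every morphism $f \colon A \to B[n]$ in $\Db \mod \Lambda$, there is an epimorphism $C \twoheadrightarrow A$ in $\HH(\UU)$ such that the composite $C \to A \to B[n]$ vanishes. Once $P(n)$ is established for all $n \geq 2$, condition (1) of Proposition \ref{proposition:DerivedEquivalence} is verified; since $\UU$ is bounded by assumption, Theorem \ref{theorem:BBD} then supplies the derived equivalence $\Db \HH(\UU) \cong \Db \mod \Lambda$. The base case $P(1)$ is immediate from Remark \ref{remark:ExtInHeart}: represent $f \in \Ext^1(A,B)$ by a short exact sequence $0 \to B \to C \to A \to 0$ in $\HH(\UU)$, and the resulting epimorphism $C \twoheadrightarrow A$ kills $f$. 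The base case $P(2)$ is precisely the hypothesis of the lemma, translated into the $P(2)$-form by Lemma \ref{lemma:DerivedEquivalence}.

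For the inductive step, fix $n \geq 3$, let $f \colon A \to B[n]$, and assume $P(m)$ for all $1 \leq m < n$. By Proposition \ref{proposition:Criterion} there is a finitely generated bounded aisle $\YY \subseteq \Db \mod \Lambda$ satisfying the hypotheses of Proposition \ref{proposition:Criterion1} with $i = 1$, and that proposition then factors $f = \beta \circ \alpha$ through some $X \in \UU[n-2] \cap \VV[n-1]$. The $\UU$-cohomology of $X$ is concentrated in degrees $-(n-1)$ and $-(n-2)$, so the standard truncation triangle for $(\UU,\VV)$ reads
$$A_{n-1}[n-1] \xrightarrow{\iota} X \xrightarrow{\pi} A_{n-2}[n-2] \to A_{n-1}[n],$$
with $A_{n-2} := H^{-(n-2)}_\UU(X)$ and $A_{n-1} := H^{-(n-1)}_\UU(X)$ in $\HH(\UU)$. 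Apply $P(n-2)$ to $\pi \circ \alpha \colon A \to A_{n-2}[n-2]$ to produce an epimorphism $D \twoheadrightarrow A$ with $\pi \circ \alpha|_D = 0$; then $\alpha|_D$ factors as $\iota \circ \gamma$ for some $\gamma \colon D \to A_{n-1}[n-1]$, and consequently $f|_D = (\beta \iota) \circ \gamma$. Now apply $P(n-1)$ to $\gamma$ to find an epimorphism $D' \twoheadrightarrow D$ with $\gamma|_{D'} = 0$, whence $f|_{D'} = 0$; the composite $D' \twoheadrightarrow D \twoheadrightarrow A$ is the epimorphism required by $P(n)$.

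The main technical point is the factorization furnished by Proposition \ref{proposition:Criterion1}, which rests on the finitely generated approximating aisle $\YY$ produced in Proposition \ref{proposition:Criterion} (itself built from the classification of $t$-structures in \S\ref{subsection:Classification} together with $\bS \UU \subseteq \UU$). Without access to an object $X$ whose $\UU$-cohomology is concentrated in only two adjacent degrees, there would be no natural way to split the degree-$n$ problem into the smaller degree-$(n-1)$ and degree-$(n-2)$ problems, and the induction would not close; once that two-step structure is in hand, however, the hypothesis $P(2)$ propagates cleanly through all higher $n$.
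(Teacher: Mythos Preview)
Your proof is correct. Both your argument and the paper's rely on the same crucial input (Propositions \ref{proposition:Criterion1} and \ref{proposition:Criterion}) to factor $f$ through an object $X$ whose $\UU$-cohomology is concentrated in two adjacent degrees, but the way this is exploited differs.

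You work with the epimorphism condition (Proposition \ref{proposition:DerivedEquivalence}(1)) and set up a clean strong induction: after factoring through $X$, you apply $P(n-2)$ to kill the contribution of $H^{-(n-2)}_\UU X$, then $P(n-1)$ to kill that of $H^{-(n-1)}_\UU X$. The paper instead works with the dual monomorphism condition (Proposition \ref{proposition:DerivedEquivalence}(2)) and avoids the full induction: it builds a morphism of triangles from the truncation triangle of $X$ to a triangle on the composite $(H^{1-n}_\UU X)[n-1] \to X \to B[n]$, producing a monomorphism $B \hookrightarrow M$ in $\HH(\UU)$ such that $A \to B[n] \to M[n]$ factors through a genuine degree-$2$ map $H^{2-n}_\UU X \to M[2]$; the hypothesis of the lemma then gives a further monomorphism $M \hookrightarrow C$ killing this. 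So the paper uses only the $n=2$ case at each step, whereas you invoke both $P(n-1)$ and $P(n-2)$. Your induction is arguably more transparent and mechanical; the paper's single-step construction is slightly slicker but requires the morphism-of-triangles trick. Either way the essential content is the same two-degree concentration of $X$.
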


\begin{proof}
We will show that, for all objects $A,B \in \HH(\UU)$ and all integers $n \geq 2$, there is a monomorphism $B \to C$ in $\HH(\UU)$ such that $A \to B[n] \to C[n]$ is zero.  The required property then follows from Theorem \ref{theorem:BBD}.

Seeking a contradiction, assume that we have chosen $A,B \in \HH(\UU)$ and $n \geq 2$ such that there exists no such monomorphism $B \to C$.  Furthermore, assume that (over all $A,B \in \HH(\UU)$), we have chosen $n$ to be minimal with this property.  The conditions in the statement of the lemma imply that $n \geq 3$ (see Lemma \ref{lemma:DerivedEquivalence}).

It follows from Propositions \ref{proposition:Criterion1} and \ref{proposition:Criterion} that there is an object $X \in \UU[n-2] \cap \VV[n-1]$ and a factorization $A \to X \to B[n]$.  We have a triangle
$$(H_\UU^{1-n}X)[n-1] \to X \to (H_\UU^{2-n}X)[n-2] \to (H_\UU^{1-n}X)[n].$$
The composition $(H_\UU^{1-n}X)[n-1] \to X \to B[n]$ induces a morphism of triangles:
$$\xymatrix{
(H_\UU^{1-n}X)[n-1] \ar[r] \ar@{=}[d] & X \ar[r] \ar[d] & (H_\UU^{2-n}X)[n-2] \ar[r] \ar@{-->}[d]& (H_\UU^{1-n}X)[n] \ar@{=}[d] \\
(H_\UU^{1-n}X)[n-1] \ar[r] & B[n] \ar[r] & M[n] \ar[r] & (H_\UU^{1-n}X)[n]
}$$
We note that $M \in \HH(\UU)$.  By Remark \ref{remark:ExtInHeart}, the bottom triangle gives a short exact sequence
$$0 \to B \to M \to H_\UU^{1-n}X \to 0$$
in $\HH(\UU)$.  We see that the composition $X \to B[n] \to M[n]$ factors as $X \to (H_\UU^{2-n}X)[n-2] \to M[n]$.  By the assumptions of the lemma, we know that there is a $Z \in \HH(\UU)$ such that $H_\UU^{2-n}X \to M[2]$ factors through $Z[1]$.  There is thus a short exact sequence
$$0 \to M \to C \to Z \to 0$$
in $\HH(\UU)$.  We claim that the composition $B \to M \to C$ is the required monomorphism in $\HH(\UU)$.

To verify this claim, it suffices to check that the composition $X \to B[n] \to M[n] \to C[n]$ is zero or, since $X \to B[n] \to M[n]$ factors as $X \to (H_\UU^{2-n}X)[n-2] \to M[n]$, that $(H_\UU^{2-n}X)[n-2] \to M[n] \to C[n]$ is zero.  The last statement follows easily since $(H_\UU^{2-n}X)[n-2] \to M[n]$ factors through $Z[n-1] \to M[n]$.
\end{proof}

We will use the following equivalent formulation for $\Ext_{\Db \AA}^2(A,B)$ to be the Yoneda composition of two short exact sequences in $\HH(\UU)$.

\begin{lemma}\label{lemma:Ext2Splittings}
Let $\AA$ be an abelian category, and let $\UU \subseteq \Db \AA$ be an aisle.  For all $A,B \in \HH(\UU)$ and all morphisms $f:A \to B[2]$ in $\Db \AA$, the following are equivalent:
\begin{enumerate}
\item there is a $Z \in \HH(\UU)$ such that $f$ factors as $A \to Z[1] \to B[2]$,
\item there is an epimorphism $C \to A$ in $\HH(\UU)$ such that the composition $C \to A \to B[2]$ is zero, and
\item there is an monomorphism $B \to D$ in $\HH(\UU)$ such that the composition $C \to B[2] \to D[2]$ is zero.
\end{enumerate}
\end{lemma}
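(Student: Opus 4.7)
The plan is to observe that Lemma \ref{lemma:Ext2Splittings} is essentially just Lemma \ref{lemma:DerivedEquivalence} specialized to the case $n = 2$, so no new argument is needed; the work has already been done in the proof of Lemma \ref{lemma:DerivedEquivalence}.

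More precisely, in Lemma \ref{lemma:DerivedEquivalence} two auxiliary factorizations appear: one of the form $A = A_0 \to A_1[1] \to B[n]$ (equivalent to the existence of an epimorphism $C \to A$ in $\HH(\UU)$ killing the composite $C \to A \to B[n]$), and one of the form $A = A_0 \to A_{n-1}[n-1] \to B[n]$ (equivalent to the existence of a monomorphism $B \to D$ in $\HH(\UU)$ killing the composite $A \to B[n] \to D[n]$). When $n = 2$ these two factorizations coincide, since $A_1[1] = A_{n-1}[n-1]$, and both reduce to the statement that $f$ factors as $A \to Z[1] \to B[2]$ with $Z \in \HH(\UU)$. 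This factorization is exactly condition~(1) of Lemma \ref{lemma:Ext2Splittings}.

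Applying the first equivalence from Lemma \ref{lemma:DerivedEquivalence} at $n = 2$ therefore gives (1) $\Leftrightarrow$ (2), and applying the second equivalence at $n = 2$ gives (1) $\Leftrightarrow$ (3). Since there is no genuinely new content, there is no main obstacle: the nontrivial step — passing between the existence of an appropriate epi or mono in $\HH(\UU)$ and the existence of an intermediate factorization through a shifted object of the heart — is already carried out in the proof of Lemma \ref{lemma:DerivedEquivalence} via building the triangle on the morphism $A \to A_1[1]$ (respectively on $A_{n-1}[n-1] \to B[n]$) and invoking Remark \ref{remark:ExtInHeart} to identify the resulting triangle with a short exact sequence in $\HH(\UU)$.
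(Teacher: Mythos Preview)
Your proposal is correct and matches the paper's own proof exactly: the paper simply writes ``Directly from Lemma \ref{lemma:DerivedEquivalence}.'' Your additional explanation of why the two factorizations in Lemma \ref{lemma:DerivedEquivalence} coincide when $n=2$ is a helpful elaboration of precisely this one-line argument.
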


\begin{proof}
Directly from Lemma \ref{lemma:DerivedEquivalence}.
\end{proof}
\section{Aisles with no nonzero Ext-projectives}\label{section:NoExtProjectives}

Let $\Lambda$ be a finite-dimensional hereditary algebra and let $\AA$ be the category $\mod \Lambda$ of finite-dimensional right $\Lambda$-modules.  In this section, we will use the results of \S\ref{section:Criterion} to show that our main theorem holds under the additional assumption that the aisle $\UU \subseteq \Db \AA$ has no nonzero $\UU$-projectives.

We will use the description of the aisles in $\Db \AA$, given in \S\ref{subsection:Classification}.  Thus let $\UU \subseteq \Db \AA$ be an aisle with no nonzero $\UU$-projectives, and let $(\WW(-),t_\WW(-))$ be the associated refined $t$-sequence.  We will write $\NN(n)$ for $H^{-n}(\UU)$.  Recall that $\WW(n)$ is $\wide \NN(n)$ and that $t_\WW(n) = \NN(n) \cap {}^\perp \WW(n-1)$.

Since we assume that there are no nonzero $\UU$-projective objects, we know from Proposition \ref{proposition:FinitelyGenerated} that $t_\WW(n)$ has no nonzero $t_\WW(n)$-projective objects, for all $n \in \bZ$.  However, $\NN(n)$ may have nonzero $\NN(n)$-projective objects.  The following lemma describes the possible $\NN(n)$-projective objects.

\begin{lemma}\label{lemma:WhatAreProjectives}
If $t_\WW(n)$ has no nonzero $t_\WW(n)$-projective objects, then every $\NN(n)$-projective object lies in $\WW(n-1)$ and is $\WW(n-1)$-projective.
\end{lemma}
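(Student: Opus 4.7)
The ``$\WW(n-1)$-projective'' half of the conclusion will be immediate from the inclusion $\WW(n-1) \subseteq \NN(n)$ (\cite[Corollary 4.4]{StanleyvanRoomalen12}): any object that is $\Ext^1$-orthogonal to $\NN(n)$ is in particular $\Ext^1$-orthogonal to $\WW(n-1)$. So the real task reduces to showing $E \in \WW(n-1)$ for every $\NN(n)$-projective $E$. The plan is to start from the triangle $A \to E \to B \to A[1]$ supplied by Proposition \ref{proposition:RecoverFromSequence}, with $A \in t_\WW(n)$ and $B \in \Db \WW(n-1)$. Since $\WW(n-1)$ is wide in the hereditary category $\AA$, it is itself hereditary, so $B \cong K[1] \oplus Q$ for some $K, Q \in \WW(n-1)$, and taking cohomology of the triangle produces a four-term exact sequence $0 \to K \to A \to E \to Q \to 0$ in $\AA$.

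The first main step will be to show $K = 0$. Applying the cohomological functor $\Hom(-, W)$ for $W \in \WW(n-1)$ to the triangle, and using $A \in {}^\perp \WW(n-1)$ together with the heredity of $\AA$ to kill every $\Hom(A, W[i])$, the long exact sequence collapses to the isomorphism
$$\Ext^1(E, W) \cong \Hom(B, W[1]) \cong \Hom(K, W) \oplus \Ext^1(Q, W).$$
Because $E$ is $\NN(n)$-projective and $W \in \WW(n-1) \subseteq \NN(n)$, the left-hand side vanishes, forcing $\Hom(K, W) = 0$ for all $W \in \WW(n-1)$. Specializing to $W = K$ gives $K = 0$, so the four-term sequence becomes a genuine short exact sequence $0 \to A \to E \to Q \to 0$ with $A \in t_\WW(n)$ and $Q \in \WW(n-1)$.

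The remainder is quick: applying $\Hom(-, X)$ for $X \in t_\WW(n)$ to this short exact sequence and using heredity to kill $\Ext^2(Q, X)$, the relevant piece of the long exact sequence is $\Ext^1(E, X) \to \Ext^1(A, X) \to 0$; the source vanishes because $X \in \NN(n)$ and $E$ is $\NN(n)$-projective, so $\Ext^1(A, X) = 0$ for every $X \in t_\WW(n)$. Hence $A$ is itself $t_\WW(n)$-projective, and the standing hypothesis on $t_\WW(n)$ forces $A = 0$, giving $E = Q \in \WW(n-1)$. The main obstacle is the $K = 0$ step: it requires combining the splitting $B \cong K[1] \oplus Q$ available from heredity of $\WW(n-1)$, the full orthogonality $A \in {}^\perp \WW(n-1)$ (not merely $\Hom(A, \WW(n-1)) = 0$), and the $\NN(n)$-projectivity of $E$ all at once; once $K = 0$ is extracted, the promotion of the triangle to an honest short exact sequence makes the remaining Ext-vanishing argument routine.
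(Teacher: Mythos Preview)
Your proof is correct and follows essentially the same route as the paper's: both start from the triangle $A[0]\to E[0]\to B\to A[1]$ of Proposition~\ref{proposition:RecoverFromSequence}, argue that $B$ is concentrated in degree~$0$ so that the triangle becomes a short exact sequence $0\to A\to E\to Q\to 0$, and then use heredity and $\NN(n)$-projectivity of $E$ to force $A$ to be $t_\WW(n)$-projective, hence zero. The only cosmetic difference is that the paper shows $\Hom(B,B'[i])=0$ for all $B'\in\WW(n-1)$ and $i\neq 0$ in one sweep (obtaining both $H^{-1}B=0$ and the $\WW(n-1)$-projectivity of $H^0 B$ simultaneously), whereas you first split $B\cong K[1]\oplus Q$ and kill $K$ separately, deferring the projectivity to the trivial observation $\WW(n-1)\subseteq\NN(n)$.
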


\begin{proof}
Let $E$ be an $\NN(n)$-projective object.  It follows from Proposition \ref{proposition:RecoverFromSequence} that there is a triangle
$$A[0] \to E[0] \to B \to A[1]$$
where $B \cong E^{\Db \WW(n-1)}$ and $A \in t_\WW(n)$.

We claim that $B$ is concentrated in degree zero and that $H^0 B$ is $\WW(n-1)$-projective.  To verify this, we will show that $\Hom(B,B'[i])=0$ for all $B' \in \WW(n-1)$ and all $i \not= 0$.  Thus, let $B' \in \WW(n-1)$ and apply the functor $\Hom_{\Db \AA}(-,B'[i])$ to the above triangle to obtain the exact sequence
$$\Hom(A[1], B'[i]) \to \Hom(B, B'[i]) \to \Hom(E[0], B'[i]).$$
If $i < 0$, we see that $\Hom(B, B'[i]) = 0$.  For $i \geq 1$, we see that $\Hom(E[0],B'[i]) = 0$ (since $E$ is $\NN(n)$-projective and $B' \in \WW(n-1) \subseteq \NN(n)$) and that $\Hom(A[1],B'[i]) = 0$ (since $A \in t_\WW(n) \subseteq {}^{\perp}\WW(n-1)$).  Again, we may conclude that $\Hom(B,B'[i]) = 0$.  This implies that $B$ is concentrated in degree zero and that $H^0 B$ is $\WW(n-1)$-projective.  In particular, the triangle corresponds to a short exact sequence
$$0 \to A \to E \to H^0 B \to 0$$
in $\AA$.

Consider an object $A' \in t_\WW(n) \subseteq \NN(n)$.  Since $\Ext^1_\AA(-,A')$ is right exact (because $\AA$ is hereditary) and $\Ext^1(E,A') = 0$, we find that $\Ext^1(A,A') = 0$.  This shows that $A$ is a $t_\WW(n)$-projective, and hence we find that $A$ is zero.

We conclude that $E \cong H^0 B$, and we already established that $H^0 B$ is $\WW(n-1)$-projective.
\end{proof}

\begin{lemma}\label{lemma:ClosedUnderTau}
Let $\UU \subseteq \Db \AA$ be an aisle such that $\bS \UU \subseteq \UU$.  Let $(\WW(-),t_\WW(-))$ be the associated refined $t$-sequence.  If $t_\WW(n)$ has no nonzero $t_\WW(n)$-projective objects, then for any nonprojective indecomposable $B \in \WW(n-1)$ we have $\t B \in \NN(n)$.
\end{lemma}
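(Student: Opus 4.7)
The plan is to prove $\t B \in \NN(n)$ in two stages: first establish the weaker statement $\t B \in \NN(n+1)$, then bootstrap to $\NN(n)$ using the hypothesis on $t_\WW(n)$.

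For the first stage, recall from \cite[Corollary 4.4]{StanleyvanRoomalen12} that $\WW(n-1) \subseteq \NN(n)$, so $B[n] \in \UU$. Since $\bS \UU \subseteq \UU$, we get $\bS B[n] \in \UU$. Because $B$ is nonprojective indecomposable in $\AA$, we have $\bS B \cong (\t B)[1]$ in $\Db \AA$, yielding $(\t B)[n+1] \in \UU$, i.e., $\t B \in \NN(n+1)$.

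For the second stage, I would consider the Auslander-Reiten triangle $\t B \to M \to B \to (\t B)[1]$ in $\Db \AA$ (where $M$ is the middle term of the AR sequence in $\AA$), and shift it by $n$. Since $B[n], (\t B)[n+1] \in \UU$, applying the coreflection $(-)_\UU$ (which is triangulated) to this shifted AR triangle and comparing with the identity natural transformation, one obtains an isomorphism of the canonical cofibers $(M[n])^{\VV[-1]} \cong (\t B[n])^{\VV[-1]}$. Thus $\t B \in \NN(n)$ if and only if $M \in \NN(n)$, reducing the claim to a statement about the AR middle term $M$. To show $M \in \NN(n)$, I would invoke Proposition \ref{proposition:RecoverFromSequence} and construct a triangle $A \to M \to M' \to A[1]$ with $A \in t_\WW(n)$ and $M' \in \Db \WW(n-1)$; the existence of the analogous triangle at level $n+1$ (for $M$, and also for $\t B$) is guaranteed by the first stage combined with closure of $\NN(n+1)$ under extensions.

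The main obstacle is extracting the refinement from level $n+1$ down to level $n$; this is where the hypothesis on $t_\WW(n)$ is essential. The strategy I would pursue is as follows: assume for contradiction that $\t B \notin \NN(n)$, so that $S := (\t B[n])^{\VV[-1]}[1]$ is a nonzero object in $\HH(\UU)$. Using Serre duality (to rewrite morphisms into $\VV[-1]$ as morphisms out of $\bS$-shifted objects) together with the AR-theoretic properties of $B$ and $\t B$, one produces from $S$ a nonzero object which, in the refined $t$-sequence picture, must be $\NN(n)$-projective but not lie in $\WW(n-1)$, or else produces a nonzero $t_\WW(n)$-projective directly via the decomposition of $\t B$ through $\WW(n-1) \subseteq \WW(n)$. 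Either outcome contradicts Lemma \ref{lemma:WhatAreProjectives} combined with the hypothesis that $t_\WW(n)$ has no nonzero Ext-projectives. Carrying out this last step cleanly, by tracking the $(\WW(-), t_\WW(-))$-position of each summand appearing in the AR triangle, is the technically delicate part of the argument.
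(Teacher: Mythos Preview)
Your first stage is correct and gives $\tau B \in \NN(n+1)$, but this is strictly weaker than what the paper establishes at the corresponding point, namely $\tau B \in \WW(n)$ (recall $\NN(n) \subseteq \WW(n) \subseteq \NN(n+1)$). The paper obtains $\tau B \in \WW(n)$ by a different device: take a minimal injective resolution $0 \to B \to I \to J \to 0$ in $\WW(n-1)$; since $\NN(n-1)$ is a tilting torsion class in $\WW(n-1)$, the injectives $I,J$ lie in $\NN(n-1)$; one checks they have no $\AA$-projective summands, so applying $\tau$ gives a short exact sequence with $\tau I, \tau J \in \NN(n)$ (using $\bS\UU\subseteq\UU$), whence $\tau B \in \WW(n)$. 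Knowing $\tau B$ lives in $\WW(n)$, not merely $\NN(n+1)$, is what makes the rest of the argument possible.

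Your second stage contains a genuine error: the coreflection $(-)_\UU$ is \emph{not} a triangulated functor, so you cannot deduce the isomorphism $(M[n])^{\VV[-1]} \cong (\tau B[n])^{\VV[-1]}$ in the way you describe. In fact this isomorphism need not hold: the long exact sequence in $H^*_\UU$ applied to the shifted AR triangle yields
\[
H^0_\UU(B[n]) \to H^1_\UU(\tau B[n]) \to H^1_\UU(M[n]) \to 0,
\]
and there is no reason the first map (induced by the AR morphism $B[n]\to \tau B[n+1]$) should vanish. So the claimed equivalence $\tau B\in\NN(n)\Leftrightarrow M\in\NN(n)$ is only an implication in the trivial direction, and even granting it you offer no argument for $M\in\NN(n)$. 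The contradiction sketch in your final paragraph is too vague to repair these gaps.

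The paper's actual bootstrap from $\WW(n)$ to $\NN(n)$ works entirely inside the abelian category $\WW(n)$: take the torsion decomposition $0\to T\to \tau B\to F\to 0$ with $T\in\NN(n)$ and $F$ torsionfree; apply the inverse AR translate $\tau_n^-$ of $\WW(n)$ (noting $\tau_n B\cong\tau B$) to get an epimorphism $B\to\tau_n^- F$; by the Auslander--Reiten formula $\tau_n^- F$ is $\NN(n)$-projective, so Lemma~\ref{lemma:WhatAreProjectives} forces it to be $\WW(n-1)$-projective; but $B$ is indecomposable nonprojective in $\WW(n-1)$, so $\Hom(B,\tau_n^- F)=0$, giving $\tau_n^- F=0$ and hence $F=0$. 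The key ideas your sketch is missing are the passage to $\WW(n)$, the use of the internal translate $\tau_n$, and the torsion decomposition there.
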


\begin{proof}
We first establish that $\t B \in \WW(n)$.  Let $0 \to B \to I \to J \to 0$ be a minimal injective resolution of $B$ in $\WW(n-1)$.  Recall from \S\ref{subsection:Classification} that $\NN(n-1)$ is a tilting torsion class in $\WW(n-1)$ so that we know that $I,J \in \NN(n-1)$.

Next, we claim that $I, J$ have no nonzero direct summands which are $\AA$-projective.  Since the injective resolution is minimal, we know that $J$ has no $\WW(n-1)$-projective direct summands, and thus in particular, $J$ has no $\AA$-projective direct summands.  Since $\WW(n-1)$ is hereditary and $B$ is an indecomposable nonprojective object, it follows from the minimality of the injective resolution know that $I$ does not contain (nonzero) projective direct summands in $\WW(n-1)$ and hence $I$ has no nonzero projective direct summands in $\AA$.

We may now apply $\t$ to obtain a short exact sequence $0 \to \t B \to \t I \to \t J \to 0$ in $\AA$.  It follows from $\bS \UU \subseteq \UU$ that $\tau \NN(n-1) \subseteq \NN(n)$, so that $\t I, \t J \in \NN(n) \subseteq \AA$ and thus $\t B \in \WW(n)$.

Using that $\NN(n)$ is a tilting torsion class in $\WW(n)$, we obtain the short exact sequence
$$0 \to T \to \t B \to F \to 0$$
where $T \in \NN(n)$ is torsion and $F \cong \t B / T$ is torsionfree.  Since $\NN(n)$ is tilting in $\WW(n)$, we know that all $\WW(n)$-injective objects lie in $\NN(n)$ and hence $F$ does not contain (nonzero) injective direct summands in $\WW(n)$.  We want to show that $\t B \in \NN(n)$ by showing that $F = 0$.

We know from Proposition \ref{proposition:Wide} that $\WW(n) \cong \mod \Gamma$, for a finite-dimensional hereditary algebra $\Gamma$, and hence $\WW(n)$ has an Auslander-Reiten translation $\t_n$.  Note that $B,\t B \in \WW(n)$ so that $\t_n B \cong \t B$ and thus $\t_n^- \t B \cong B$.

Since $\t_n^-$ is right exact and $\t_n^- \t B \cong B$, there is an epimorphism $B \to \t_n^- F$.  It follows from $B \in \WW(n-1) \subseteq \NN(n)$ (this last inclusion was shown in \cite[Corollary 4.4]{StanleyvanRoomalen12}) that $B$ is torsion and we infer that $\t^- F$ is torsion.  Using that $F$ has no injective direct summands in $\WW(n)$ and hence $\tau_n \tau_n^- F \cong F$, we see that $\t_n^- F$ is zero if and only if $F$ is zero.  Thus to show that $\t B \in \NN(n)$, it suffices to show that $\t_n^- F$ is zero.

It follows from the Auslander-Reiten formula that $\Ext_{\WW(n)}^1(\t_n^- F, N) = 0$ for all $N \in \NN(n)$, hence $\t^- F$ is $\NN(n)$-projective.  Lemma \ref{lemma:WhatAreProjectives} implies that $\t_n^- F$ is a projective object in $\WW(n-1)$.  Since $B$ has no $\WW(n-1)$-projective direct summands, we can use that $\WW(n-1)$ is hereditary to infer that $\Hom(B, \t_n^- F) = 0$.  Since $\t_n^- F$ is a quotient object of $B$, we have established that $\t^- F$ is zero.  This finishes the proof.
\end{proof}

\begin{proposition}\label{proposition:NoProjectivesInAisle}
Let $\UU \subseteq \Db \AA$ be a bounded aisle without nonzero $\UU$-projective objects and such that $\bS \UU \subseteq \UU$.  There is a finitely generated aisle $\YY \subseteq \Db \AA$ satisfying the conditions in Proposition \ref{proposition:Criterion1} for $i=0$.
\end{proposition}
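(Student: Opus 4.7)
The plan is to take $\YY$ to be the same aisle used in the proof of Proposition \ref{proposition:Criterion}: the one associated to the refined $t$-sequence $(\WW(-),s_{\WW}(-))$ with $s_{\WW}(n):=\WW(n)\cap{}^{\perp}\WW(n-1)$, the maximal tilting torsion class in its ambient wide subcategory. By Remark \ref{remark:FormOfPerpendiculars} each $s_{\WW}(n)$ is finitely generated, so Corollary \ref{corollary:FinitelyGeneratedCriterion} gives that $\YY$ is finitely generated and bounded. Since $\YY$ and $\UU$ share the wide subcategories $\WW(-)$ and $t_{\WW}(n)\subseteq s_{\WW}(n)$ for every $n$, Proposition \ref{proposition:AislesInclusion} immediately yields both $\UU\subseteq\YY$ and $\YY[1]\subseteq\UU$.

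The substantive work is to upgrade the Serre condition from $\bS\YY[1]\subseteq\UU$ (which is verified in Proposition \ref{proposition:Criterion}) to the stronger $\bS\YY\subseteq\UU$. As $\bS$ is triangulated and $\UU$ is a preaisle, it suffices to check $\bS(Y[n])\in\UU$ on generators, i.e., for every indecomposable $Y\in s_{\WW}(n)$. If $Y$ is not $\AA$-projective, then $\bS Y=(\tau Y)[1]$, and Lemma \ref{lemma:ClosedUnderTau}, with its index shifted from $n-1$ to $n$, gives $\tau Y\in\NN(n+1)$; hence $\bS(Y[n])=(\tau Y)[n+1]\in\UU$. The obstacle, where the no-projectives hypothesis on $\UU$ must be used, is the case of an $\AA$-projective indecomposable summand $P$ of some generator; in this situation $\bS P$ is concentrated in degree zero, and one would need $\bS P\in\NN(n)$, which is not immediate from $\bS\UU\subseteq\UU$.

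To eliminate this case I will prove, as an auxiliary lemma, that under the hypotheses of the proposition the subcategory $\WW(n)\cap{}^{\perp}\WW(n-1)$ contains no nonzero $\AA$-projective object. The strategy is to force any such $P$ into $t_{\WW}(n)$: since $\NN(n)$ is a tilting torsion class in the hereditary category $\WW(n)$, one embeds $P$ in some $T\in\NN(n)$, and then uses the hereditary structure together with the Serre duality of the subcategory $\WW(n)\cap{}^{\perp}\WW(n-1)\cong\mod\Gamma$ from Remark \ref{remark:FormOfPerpendiculars} to argue that this embedding splits, which exhibits $P$ as a direct summand of $T\in\NN(n)$ and thus, because $\NN(n)=H^{-n}(\UU)$ is closed under direct summands, puts $P$ into $\NN(n)\cap{}^{\perp}\WW(n-1)=t_{\WW}(n)$. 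Once $P\in t_{\WW}(n)$, the $\AA$-projectivity of $P$ makes it a $t_{\WW}(n)$-projective, so Proposition \ref{proposition:FinitelyGenerated} together with the hypothesis that $\UU$ has no nonzero Ext-projectives forces $P=0$. The hardest step of this program is the splitting of the embedding $P\hookrightarrow T$, which will rest on the Auslander-Reiten formula in the hereditary category combined with the control over $\tau$ of objects of $\WW(n)$ already supplied by Lemma \ref{lemma:ClosedUnderTau}. With the lemma in hand every indecomposable summand of a generator of $\YY$ is non-$\AA$-projective, the Serre condition reduces to the non-projective case treated above, and all three conditions of Proposition \ref{proposition:Criterion1} with $i=0$ are satisfied.
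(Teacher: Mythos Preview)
Your proposed auxiliary lemma --- that under the hypotheses of the proposition the subcategory $\WW(n)\cap{}^{\perp}\WW(n-1)$ contains no nonzero $\AA$-projective --- is false, and this is a genuine gap in the argument rather than a presentational issue. Take $\Lambda$ to be the Kronecker algebra, set $\WW(m)=0$ for $m<0$ and $\WW(m)=\AA$ for $m\geq 0$, and let $t_\WW(0)$ be the tilting torsion class of preinjective modules. This $t_\WW(0)$ has no Ext-projectives, so the aisle $\UU$ satisfies all the hypotheses. But $s_\WW(0)=\AA$ contains both indecomposable $\AA$-projectives $P_1,P_2$. Your splitting argument cannot succeed here: embedding $P_1$ into a preinjective $T$ never splits, since preprojectives and preinjectives share no direct summands. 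The underlying issue is that projectivity of $P$ gives $\Ext^1(P,-)=0$, whereas splitting a monomorphism $P\hookrightarrow T$ needs $\Ext^1(T/P,P)=0$; Serre duality in $\mod\Gamma$ does not bridge this. There is also a secondary gap: your appeal to Lemma~\ref{lemma:ClosedUnderTau} for non-$\AA$-projective $Y\in s_\WW(n)$ requires $Y$ to be non-$\WW(n)$-projective, which is strictly stronger in general.

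The paper's proof avoids both problems by choosing a \emph{smaller} aisle: it replaces $s_\WW(n)$ by $m_\WW(n)$, the subcategory of $\WW(n)\cap{}^\perp\WW(n-1)$ consisting of objects with no $(\WW(n)\cap{}^\perp\WW(n-1))$-projective direct summand. Since $t_\WW(n)$ has no Ext-projectives, one has $t_\WW(n)\subseteq m_\WW(n)$, so $m_\WW(n)$ is still a tilting torsion class and the associated $\YY$ still satisfies $\UU\subseteq\YY$ and $\YY[1]\subseteq\UU$. Crucially, any $\AA$-projective in $\WW(n)\cap{}^\perp\WW(n-1)$ is automatically projective there, so $m_\WW(n)$ contains no $\AA$-projectives and $\tau$ is defined on all its objects; moreover every indecomposable of $m_\WW(n)$ is non-$\WW(n)$-projective, so Lemma~\ref{lemma:ClosedUnderTau} applies directly to give $\tau m_\WW(n-1)\subseteq\NN(n)$, which is exactly $\bS\YY\subseteq\UU$. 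The moral is that rather than proving that the troublesome projectives are absent from the maximal choice $s_\WW(n)$ (they are not), one should simply discard them.
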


\begin{proof}
Let $(\WW(-), t_\WW(-))$ be the refined $t$-sequence associated with $\UU$.  We let $m_{\WW}(n)$ be the full subcategory of $\WW(n) \cap {}^\perp \WW(n-1)$ given by all objects without nonzero $(\WW(n) \cap {}^\perp \WW(n-1))$-projective direct summands, for all $n \in \bZ$.  Note that $m_{\WW}(n)$ is closed under quotient objects in $\WW(n) \cap {}^\perp \WW(n-1)$, and that $m_{\WW}(n)$ is closed under extensions since $\WW(n) \cap {}^\perp \WW(n-1)$ is hereditary (see Remark \ref{remark:FormOfPerpendiculars}).  Hence, $m_{\WW}(n)$ is a torsion class in $\WW(n) \cap {}^\perp \WW(n-1)$.  Since $t_\WW(n)$ does not contain any $\WW(n)$-projective objects, we know that $t_{\WW}(n) \subseteq m_{\WW}(n)$, and since $t_{\WW}(n)$ is tilting in $\WW(n) \cap {}^\perp \WW(n-1)$, we know that $m_{\WW}(n)$ is a tilting torsion class in $\WW(n) \cap {}^\perp \WW(n-1)$.

We will define $\YY \subseteq \Db \AA$ to be the aisle associated to the refined $t$-sequence $(\WW(-), m_\WW(-))$.  We want to use Corollary \ref{corollary:FinitelyGeneratedCriterion} to show that $\YY$ is indeed finitely generated and bounded.  Since $\UU$ is bounded, Corollary \ref{corollary:FinitelyGeneratedCriterion} implies that $\WW(n) = 0$ for $n \ll 0$ and that $\WW(n) = \AA$ for $n \gg 0$.  To see that $m_\WW(n)$ is finitely generated in $\WW(n) \cap {}^\perp \WW(n-1)$, recall from Remark \ref{remark:FormOfPerpendiculars} that $\WW(n) \cap {}^\perp \WW(n-1) \cong \mod \Gamma_n$, for a finite-dimensional hereditary algebra $\Gamma_n$; let $\tau_n$ be the Auslander-Reiten translate in $\WW(n) \cap {}^\perp \WW(n-1) \cong \mod \Gamma_n$.  In $\WW(n) \cap {}^\perp \WW(n-1)$, there are no indecomposable projective-injective objects.  Indeed, such an object would need to be contained in $t_\WW(n)$ and would hence be $t_\WW(n)$-projective.  The subcategory $m_\WW(n)$ of $\WW(n) \cap {}^\perp \WW(n-1)$ is generated by $\tau^-_n \Gamma_n$.  We can now apply Corollary \ref{corollary:FinitelyGeneratedCriterion} to see that $\YY$ is indeed finitely generated and bounded.

The inclusions $\YY[1] \subseteq \UU \subseteq \YY$ follows directly from Proposition \ref{proposition:AislesInclusion}.  We need to show that $\bS \YY \subseteq \UU$.  For this, we first to note that $m_\WW(n)$ does not contain nonzero $\AA$-projective objects.  Indeed, such nonzero $\AA$-projective objects would the be projective objects in $\WW(n) \cap {}^\perp \WW(n-1)$, and thus not contained in $m_\WW(n)$.  Hence, $\tau$ is defined on every object in $m_\WW(n)$.

It is straightforward to see that $\bS \YY \subseteq \UU$ if and only if $\tau m_\WW(n-1) \subseteq \NN(n)$, for all $n \in \bZ$.  We can conclude the proof by invoking Lemma \ref{lemma:ClosedUnderTau}.
\end{proof}

\begin{corollary}\label{corollary:NoProjectivesInAisle}
Let $\Lambda$ be a finite-dimensional hereditary algebra and let $(\UU, \VV)$ be a bounded $t$-structure on $\Db \mod \Lambda$ such that $\bS \UU \subseteq \UU$.  If $\UU$ has no nonzero $\UU$-projectives, then $(\UU, \VV)$ induces a triangle equivalence $\Db \HH(\UU)\stackrel{\sim}{\rightarrow}\Db \mod \Lambda$.
\end{corollary}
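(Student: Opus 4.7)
The plan is to deduce this corollary almost immediately by chaining together the two main results of the section with the criterion established in Section~\ref{section:Criterion}. Specifically, the hypotheses of Corollary \ref{corollary:NoProjectivesInAisle} are precisely those of Proposition \ref{proposition:NoProjectivesInAisle}: we have a bounded aisle $\UU \subseteq \Db \mod \Lambda$ with $\bS \UU \subseteq \UU$ and no nonzero $\UU$-projectives. So the first step is to invoke Proposition \ref{proposition:NoProjectivesInAisle} to produce a finitely generated (and necessarily bounded) aisle $\YY \subseteq \Db \mod \Lambda$ satisfying the conditions of Proposition \ref{proposition:Criterion1} with $i = 0$, namely $\bS \YY \subseteq \UU \subseteq \YY$ and $\YY[1] \subseteq \UU$.

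Once such a $\YY$ is in hand, the second step is simply to apply Corollary \ref{corollary:Criterion}, which directly asserts that whenever the conditions of Proposition \ref{proposition:Criterion1} are met for $i=0$, the aisle $\UU \subseteq \Db \mod \Lambda$ induces a triangulated equivalence $\Db \HH(\UU) \xrightarrow{\sim} \Db \mod \Lambda$. Since $\AA = \mod \Lambda$ is hereditary, abelian, and of finite global dimension, all the hypotheses of both results are satisfied in our setting.

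There is essentially no hard step left at this stage; all of the substantive work (the construction of the aisle $\YY$ using the refined $t$-sequence $(\WW(-), m_\WW(-))$, the verification that $\tau m_\WW(n-1) \subseteq \NN(n)$ via Lemma \ref{lemma:ClosedUnderTau}, and the factorization of $\Ext^{\geq 2}$-classes in Lemma \ref{lemma:OnlyExt2}) has already been carried out in Propositions \ref{proposition:Criterion1}, \ref{proposition:NoProjectivesInAisle} and Corollary \ref{corollary:Criterion}. Thus the only thing the proof needs to do is to point out that the hypotheses of Proposition \ref{proposition:NoProjectivesInAisle} match those of the corollary, and then quote Corollary \ref{corollary:Criterion}. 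The main obstacle, if any, is purely bookkeeping: making sure that the aisle $\YY$ furnished by Proposition \ref{proposition:NoProjectivesInAisle} really does satisfy \emph{all} three conditions in Proposition \ref{proposition:Criterion1} (finitely generated, $\bS \YY \subseteq \UU \subseteq \YY$, and $\YY[1] \subseteq \UU$), which is explicit in the statement of Proposition \ref{proposition:NoProjectivesInAisle}. The proof can therefore be written in two sentences.
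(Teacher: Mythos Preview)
Your proposal is correct and matches the paper's proof exactly: the paper simply writes ``This follows from Corollary \ref{corollary:Criterion} and Proposition \ref{proposition:NoProjectivesInAisle}.'' All of the bookkeeping you flag is indeed already handled in those earlier results.
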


\begin{proof}
This follows from Corollary \ref{corollary:Criterion} and Proposition \ref{proposition:NoProjectivesInAisle}.
\end{proof}
\section{Reduction by a simple top}\label{section:Reduction}

Let $\AA$ be a hereditary category with Serre duality.  Let $\UU \subseteq \Db \AA$ be a bounded aisle, satisfying $\bS \UU \subseteq \UU$, and let $E \in \UU$ be an indecomposable $\UU$-projective.  It has been shown in Proposition \ref{proposition:SimpleTop} that $E$ is a projective object in the heart $\HH(\UU)$ and that $E$ has a simple top $S = S_E \in \HH(\UU)$.  By Schur's Lemma, $\End S$ is a skew field and the embedding $\thick S \to \Db \AA$ has a left and a right adjoint as described in \S\ref{subsection:Perpendicular}.

Recall from \S\ref{subsection:Perpendicular} that the embedding ${}^\perp S \to \Db \AA$ admits a left and a right adjoint, so that Proposition \ref{proposition:KellerSerre} yields that ${}^\perp S$ has a Serre functor $\bS' \cong T^*_S \circ \bS$.  To reduce notation, we write $\UU'$ for $\UU \cap {}^\perp S$.

The main result of this section is Proposition \ref{proposition:Reduction} below, where we reduce the problem of whether an aisle $\UU \subseteq \Db \AA$ induces a derived equivalence, to the (smaller and supposedly easier case) case of whether the corresponding aisle $\UU' \subseteq {}^\perp S$ induces a derived equivalence.

Our first step will be checking whether $\UU'$ is an aisle in ${}^\perp S$ which is closed under the Serre functor $\bS'$ of ${}^\perp S$.  This will be done in Proposition \ref{proposition:UUaisle}.

\begin{lemma}\label{lemma:UUaisle}
If $X \in \UU$, then $T^*_S(X) \in \UU'$.
\end{lemma}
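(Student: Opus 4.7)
The plan is to show $T^*_S(X) \in \UU$, since $T^*_S(X) \in {}^\perp S$ holds by construction. The tool is the defining triangle
$$T^*_S(X) \to X \to C \to T^*_S(X)[1], \qquad C := \RHom(X, S)^* \stackrel{L}{\otimes}_A S,$$
analyzed via the cohomological functors $H^n_\UU$. Since $S$ is simple in $\HH(\UU)$, $A = \End S$ is a skew field by Schur's lemma; therefore $C \cong \bigoplus_i V_i^* \otimes_A S[i]$ where $V_i := \Hom_{\Db \AA}(X, S[i])$, and each $V_i^* \otimes_A S$ is a finite direct sum of copies of $S$.

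For $X \in \UU$ and $i \leq -1$, we have $S[i] \in \VV[-1]$, so $V_i = \Hom(\UU, \VV[-1]) = 0$; hence $C \cong \bigoplus_{i \geq 0} S^{n_i}[i]$ with $n_i := \dim_A V_i$. Each $S[i]$ for $i \geq 0$ lies in $\UU$, so $C \in \UU$. Moreover, since $S \in \HH(\UU)$, $H^n_\UU(S[i]) = H^{n+i}_\UU(S)$ vanishes whenever $n + i \neq 0$, giving $H^n_\UU(C) = 0$ for $n \geq 1$ and $H^0_\UU(C) = S^{n_0}$.

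Combining $H^n_\UU(X) = 0$ for $n \geq 1$ with the above, the long exact sequence in $H^*_\UU$ applied to the defining triangle produces $H^n_\UU(T^*_S X) = 0$ for all $n \geq 2$, and the exact sequence
$$H^0_\UU(X) \to S^{n_0} \to H^1_\UU(T^*_S X) \to 0$$
for $n = 1$. Thus it remains to prove that the induced map $H^0_\UU(X) \to S^{n_0}$ is surjective in $\HH(\UU)$, since this forces $H^1_\UU(T^*_S X) = 0$ and hence $T^*_S(X) \in \UU$.

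Under the identification $V_0 = \Hom_{\Db \AA}(X, S) \cong \Hom_{\HH(\UU)}(H^0_\UU(X), S)$ (coming from the adjunction $(-)^\VV \dashv \text{incl}$, since $X \in \UU$ and $S \in \VV$), and using Proposition~\ref{proposition:SimpleTop}(\ref{Number8}) to eliminate the contributions of the $i \geq 1$ summands of $C$ in $\Hom(C, S)$, the induced map $\eta : H^0_\UU(X) \to V_0^* \otimes_A S$ is the universal map for morphisms to $S$: for every $f \in V_0$, one has $f = \phi \circ \eta$ where $\phi \in \Hom(V_0^* \otimes_A S, S) \cong V_0$ is the projection corresponding to $f$ under the double-dual identification. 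If the cokernel of $\eta$ were nonzero, then by semisimplicity of $V_0^* \otimes_A S$ one would obtain a nonzero $\phi$ with $\phi \circ \eta = 0$; its corresponding $f$ would then vanish, forcing $\phi = 0$, a contradiction. The main obstacle I anticipate is precisely this universal-property step, which requires careful tracking of the adjunction for $\thick S$ and of the compatibility between $X$ and $H^0_\UU(X)$ under the identification of $V_0$.
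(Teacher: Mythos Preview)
Your proof is correct and follows essentially the same route as the paper: both use the defining triangle for $T^*_S(X)$, compute $H^n_\UU$ of $C = \RHom(X,S)^* \stackrel{L}{\otimes} S$, and reduce to showing that the map $H^0_\UU(X) \to H^0_\UU(C) \cong V_0^* \otimes_A S$ is an epimorphism in $\HH(\UU)$. The paper dispatches this last step in one clause (``$S$ is simple in $\HH(\UU)$, so the map is an epimorphism''), while you unpack it via the universal property of the coevaluation; your argument is exactly what justifies the paper's terse assertion. One minor remark: your appeal to Proposition~\ref{proposition:SimpleTop}(\ref{Number8}) to kill the $i \geq 1$ contributions in $\Hom(C,S)$ is not really needed, since for $i \geq 1$ one has $S[i] \in \UU[1]$ and $S \in \VV$, so $\Hom(S[i],S) = 0$ already from the $t$-structure axioms.
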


\begin{proof}
We will show that $T^*_S(X) \in \UU$ by showing that $H_\UU^i(T_S^*(X)) = 0$ for all $i > 0$.

Consider the triangle
$$T^*_S(X) \to X \to \RHom(X,S)^* \stackrel{L}{\otimes}_{\End S} S \to T^*_S(X)[1]$$
and recall that $\RHom(X,S)^* \stackrel{L}{\otimes} S \cong \oplus_i \Hom(X,S[i])^* \otimes S[i]$.  Since $X \in \UU$ and $S \in \HH(\UU)$, we have $\Hom(X,S[i]) = 0$ for $i < 0$.  In particular, $H_{\UU}^i(\RHom(X,S)^* \stackrel{L}{\otimes} S) = 0$ for $i > 0$.

The long exact sequence obtained from the above triangle by applying $H_\UU$, shows that $H_\UU^i(T^*_S(X)) = 0$ for all $i > 1$.  To show that $T^*_S(X) \in \UU$, we need to show that $H_\UU^{1}(T^*_S(X)) = 0$.  This follows from the exactness of
$$H_\UU^0 (X) \to H_{\UU}^0 (\RHom(X,S)^* \stackrel{L}{\otimes} S) \to H_\UU^1(T^*_S(X)) \to H_\UU^1 (X),$$
together with $H_\UU^1 (X) = 0$ (since $X \in \UU$) and that $S$ is simple in $\HH(\UU)$ (so that the map $H_\UU^0 (X) \to H_{\UU}^0 (\RHom(X,S)^* \otimes S)$ is an epimorphism).
\end{proof}

\begin{proposition}\label{proposition:UUaisle}
With notation as above, $\UU'$ is an aisle in ${}^\perp S$ and $\bS' \UU' \subseteq \UU'$.
\end{proposition}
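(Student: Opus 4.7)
My plan is to check the preaisle conditions and the coreflection property separately, and then handle the Serre-closure statement at the end. The preaisle part is immediate: ${}^\perp S$ is thick in $\Db \AA$ (in particular closed under extensions and under $[1]$), and $\UU$ is closed under extensions and under $[1]$ by virtue of being an aisle, so the intersection $\UU' = \UU \cap {}^\perp S$ inherits both properties inside ${}^\perp S$. Similarly, $\UU'$ is replete as the intersection of two replete subcategories.

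To produce the right adjoint to the inclusion $\UU' \hookrightarrow {}^\perp S$, I would transport the $(\UU, \VV)$-truncation through the right adjoint $T^*_S$ of the inclusion $\iota : {}^\perp S \hookrightarrow \Db \AA$. Given $X \in {}^\perp S$, apply the triangulated functor $T^*_S$ to the truncation triangle $X_\UU \to X \to X^\VV \to X_\UU[1]$ in $\Db \AA$. Since $\iota$ is fully faithful with right adjoint $T^*_S$, the unit $X \to T^*_S(\iota X)$ is an isomorphism, so the resulting triangle in ${}^\perp S$ has the form
$$T^*_S(X_\UU) \to X \to T^*_S(X^\VV) \to T^*_S(X_\UU)[1].$$
Lemma \ref{lemma:UUaisle} puts $T^*_S(X_\UU)$ in $\UU'$. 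For the orthogonality, the $(\iota, T^*_S)$-adjunction together with the fact that $T^*_S$ commutes with $[-1]$ gives, for any $U \in \UU' \subseteq \UU$,
$$\Hom_{{}^\perp S}(U, T^*_S(X^\VV)[-1]) \cong \Hom_{\Db \AA}(U, X^\VV[-1]) = 0,$$
where the vanishing is the $t$-structure axiom for $(\UU, \VV)$. Thus $T^*_S(X^\VV)[-1]$ lies in $\UU'^{\perp_0}$ inside ${}^\perp S$, and the displayed triangle realizes $\UU'$ as an aisle in ${}^\perp S$.

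For the Serre closure: by Proposition \ref{proposition:KellerSerre} applied to $\iota$ (which has both a left and a right adjoint), the Serre functor on ${}^\perp S$ is $\bS' \cong T^*_S \circ \bS \circ \iota$. For any $U \in \UU'$ we have $\bS U \in \bS \UU \subseteq \UU$ by the standing hypothesis on $\UU$, and then Lemma \ref{lemma:UUaisle} once again yields $\bS' U = T^*_S(\bS U) \in \UU'$. The only mildly delicate point in the whole argument is keeping straight which Hom-category the perpendicular condition is computed in; once the $(\iota, T^*_S)$-adjunction is invoked, everything reduces to Lemma \ref{lemma:UUaisle} and to the hypothesis $\bS \UU \subseteq \UU$, so no new obstacle arises.
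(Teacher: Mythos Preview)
Your argument is essentially the paper's: exhibit $T^*_S\circ(-)_\UU$ as right adjoint to $\UU'\hookrightarrow{}^\perp S$ by transporting the $(\UU,\VV)$-truncation triangle through $T^*_S$, and deduce the Serre closure from $\bS'\cong T^*_S\circ\bS$ together with Lemma \ref{lemma:UUaisle}.

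One small shift slip: in the truncation triangle $X_\UU\to X\to C\to X_\UU[1]$ the cone $C$ already lies in $\VV[-1]=\UU^{\perp_0}$, so what you need (and what the adjunction gives directly) is $\Hom_{{}^\perp S}(U,T^*_S(C))\cong\Hom_{\Db\AA}(U,C)=0$, placing $T^*_S(C)$ itself in $\UU'^{\perp_0}$; the extra $[-1]$ in your display is spurious and would only yield $T^*_S(C)\in\VV'$ rather than the required $\VV'[-1]$. With that correction the proof is complete and matches the paper's.
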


\begin{proof}
Note that $\UU'$ is indeed a preaisle in ${}^\perp S$.  It follows from Lemma \ref{lemma:UUaisle} that the functor $T^*_S \circ (-)_\UU$ takes images in $\UU'$.  It is straightforward to check that $T^*_S \circ (-)_\UU$ is right adjoint to the embedding $\UU' \to {}^\perp S$, and hence $\UU'$ is an aisle in ${}^\perp S$.

It follows from $\bS' \cong T^*_S \circ \bS$ and Lemma \ref{lemma:UUaisle}, together with $\bS \UU \subseteq \UU$, that $\bS' \UU' \subseteq \UU'$.
\end{proof}

We will write $\VV' = (\UU')^{\perp_{-1}}$ for the coaisle associated to the aisle $\UU'$ in ${}^\perp S$.  Note that $\VV'$ is not $\VV \cap {}^\perp S$.  The following lemma gives us control over the difference between $\VV'$ and $\VV \cap {}^\perp S$.

\begin{lemma}\label{lemma:WhenInVV}
For any $X \in {}^\perp S$, we have that $X \in \VV'$ if and only if $(X[-1])_\UU \in \thick(S)$.
\end{lemma}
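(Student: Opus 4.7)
The plan is to reformulate the condition $X \in \VV'$ in terms of Hom spaces, then use the right adjoint $T^*_S$ of the inclusion ${}^\perp S \hookrightarrow \Db\AA$ to detect membership in $\thick S$.

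First I would unpack the definition: $X \in \VV' = (\UU')^{\perp_{-1}}$ means $\Hom(U', X[-1]) = 0$ for every $U' \in \UU' = \UU \cap {}^\perp S$. Writing $Z := (X[-1])_\UU \in \UU$, I want to replace $X[-1]$ by $Z$ in this condition. Applying $\Hom(U', -)$ to the truncation triangle $Z \to X[-1] \to (X[-1])^\VV \to Z[1]$ and using that $(X[-1])^\VV \in \VV[-1]$ together with $\VV[-2] \subseteq \VV[-1]$ and $\Hom(\UU, \VV[-1]) = 0$, I get $\Hom(U', Z) \cong \Hom(U', X[-1])$ for all $U' \in \UU$. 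Hence $X \in \VV'$ if and only if $\Hom(U', Z) = 0$ for all $U' \in \UU'$.

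For the $(\Leftarrow)$ direction, if $Z \in \thick S$, then since $\UU' \subseteq {}^\perp S$ and ${}^\perp S$ is a triangulated subcategory closed under retracts, we have $\Hom({}^\perp S, \thick S) = 0$, so in particular $\Hom(U', Z) = 0$ for all $U' \in \UU'$.

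For the $(\Rightarrow)$ direction, which is the main step, I would consider $T^*_S(Z)$. By Lemma \ref{lemma:UUaisle}, since $Z \in \UU$, we have $T^*_S(Z) \in \UU \cap {}^\perp S = \UU'$. Since $T^*_S$ is right adjoint to the inclusion ${}^\perp S \hookrightarrow \Db\AA$ (see \S\ref{subsection:Perpendicular}), the adjunction gives
$$\Hom(T^*_S(Z), T^*_S(Z)) \cong \Hom(T^*_S(Z), Z).$$
Taking $U' = T^*_S(Z) \in \UU'$ in the assumption yields $\Hom(T^*_S(Z), Z) = 0$, so $T^*_S(Z) = 0$. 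By Remark \ref{remark:KernelOfTwist}, this forces $Z \in \thick S$, as required. The only subtle point to verify is the compatibility of the $\Hom$ computation with the triangulated structure; everything else is a clean application of the adjunction together with Lemma \ref{lemma:UUaisle}.
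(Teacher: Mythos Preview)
Your proof is correct and follows essentially the same route as the paper's. The paper's version is slightly more compressed: for ($\Leftarrow$) it invokes the adjunction $\Hom(Y,X[-1])\cong\Hom(Y,(X[-1])_\UU)$ directly rather than via the truncation triangle, and for ($\Rightarrow$) it uses the identification $(-)_{\UU'}\cong T^*_S\circ(-)_\UU$ from Proposition~\ref{proposition:UUaisle} to conclude immediately that $T^*_S(Z)=(X[-1])_{\UU'}=0$, whereas you reach the same vanishing by testing against $U'=T^*_S(Z)$ and using the adjunction $\Hom(T^*_S(Z),T^*_S(Z))\cong\Hom(T^*_S(Z),Z)$ --- the two are the same idea unpacked differently.
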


\begin{proof}
First, assume that $(X[-1])_\UU \in \thick(S)$.  Let $Y \in \UU' \subseteq \UU$.  We know that
$$\Hom(Y,X[-1]) \cong \Hom(Y,(X[-1])_\UU) = 0,$$
which shows that $X \in \VV'$.

For the other direction, assume that $X \in \VV'$, or equivalently that $(X[-1])_{\UU'} = 0$.  By Proposition \ref{proposition:UUaisle}, we know that $(-)_{\UU'} \cong T^*_S \circ (-)_\UU$, so that $(X[-1])_{\UU}$ lies in the kernel of $T_S^*$.  This shows that $(X[-1])_\UU \in \thick S$ (see Remark \ref{remark:KernelOfTwist}).
\end{proof}

Since $\VV'$ is not a subcategory of $\VV$, we cannot expect $\HH(\UU')$ to be a subcategory of $\HH(\UU)$.  The following lemma relates the hearts $\HH(\UU)$ and $\HH(\UU')$.

\begin{lemma}\label{lemma:RelatesHearts}\begin{enumerate}
\item Let $A' \in \HH(\UU')$.  There is a triangle
$$\bigoplus_{i \geq 1} \Hom(S[i],A') \otimes S[i] \to A' \to H^0_\UU(A') \to \bigoplus_{i \geq 2} \Hom(S[i-1],A') \otimes S[i].$$
\item Let $A \in \HH(\UU)$.  We have $T^*_S(A) \in \HH(\UU')$.
\end{enumerate}
\end{lemma}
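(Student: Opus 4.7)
Part (2) is immediate. Lemma \ref{lemma:UUaisle} already provides $T^*_S(A) \in \UU'$, so only $T^*_S(A) \in \VV' = (\UU')^{\perp_{-1}}$ needs to be checked. For $U \in \UU' \subseteq {}^\perp S$, the adjunction between the inclusion ${}^\perp S \hookrightarrow \Db \AA$ and its right adjoint $T^*_S$ gives $\Hom(U, T^*_S(A)[-1]) \cong \Hom(U, A[-1])$, and this vanishes since $U \in \UU$ and $A \in \HH(\UU) \subseteq \VV = \UU^{\perp_{-1}}$.

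For part (1), after reindexing one sees $\bigoplus_{i \geq 2} \Hom(S[i-1], A') \otimes S[i] \cong X[1]$ with $X := \bigoplus_{i \geq 1} \Hom(S[i], A') \otimes S[i]$, so the asserted triangle takes the form $X \to A' \to H^0_\UU(A') \to X[1]$. Since $A' \in \UU$, the left adjoint $(-)^\VV$ of the inclusion $\VV \hookrightarrow \Db \AA$ furnishes a canonical triangle $K \to A' \to (A')^\VV \to K[1]$ in which $(A')^\VV = H^0_\UU(A')$ and $K \in \UU[1]$. The whole of part (1) then reduces to identifying $K \cong X$.

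The strategy is two-fold. First I would show $K \in \thick S$: shifting the triangle by $[-1]$ exhibits $K[-1]$ as the $\UU$-part of $A'[-1]$ for the torsion pair $(\UU, \VV[-1])$, because the third term $H^0_\UU(A')[-1]$ lies in $\VV[-1]$. So $(A'[-1])_\UU = K[-1]$; combining this with Lemma \ref{lemma:WhenInVV} and $A' \in \VV'$ yields $K[-1] \in \thick S$, whence $K \in \thick S$. Second, $\End S$ is a skew field (Schur) and $\Hom(S, S[n]) = 0$ for $n \neq 0$ by Proposition \ref{proposition:SimpleTop}(\ref{Number8}); as recalled in \S\ref{subsection:Perpendicular}, this makes $\thick S$ equivalent to $\Db \mod \End S$, so every object of $\thick S$ decomposes as $\bigoplus_n V_n \otimes_{\End S} S[n]$. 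The constraint $K \in \UU[1]$ forces $V_n = 0$ for $n \leq 0$, and applying $\Hom(S[i], -)$ to the defining triangle for $i \geq 1$ — using that $\Hom(S[i], H^0_\UU(A')) = 0$ (since $H^0_\UU(A')[-i] \in \VV[-1]$ and $S \in \UU$) — identifies $V_i \cong \Hom(S[i], A')$ via the long exact sequence. I expect the main technical moment to be the passage through Lemma \ref{lemma:WhenInVV}, which is what converts the abstract condition $A' \in \VV'$ into the concrete structural fact that $K$ lies in $\thick S$ and hence admits the required explicit decomposition.
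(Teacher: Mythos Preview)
Your proposal is correct and follows essentially the same route as the paper: both arguments identify the first term of the truncation triangle $A'_{\UU[1]} \to A' \to H^0_\UU(A')$ with an object of $\thick S$ via Lemma \ref{lemma:WhenInVV}, then decompose it using $\thick S \simeq \Db \mod \End S$ and read off the coefficients. The only cosmetic difference is that the paper computes the multiplicities $V_n$ directly from the adjunction $\Hom(S[n],(A'[-1])_\UU) \cong \Hom(S[n],A'[-1])$ (valid since $S[n]\in\UU$), whereas you go through the long exact sequence of the truncation triangle; for that route you should also note $\Hom(S[i],H^0_\UU(A')[-1])=0$, but this holds for the same reason you already gave with $i$ replaced by $i+1$.
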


\begin{proof} \begin{enumerate}
\item First note that $\UU' \subseteq \UU$, so that $A' \in \UU$ and there is a triangle
$$A'_{\UU[1]} \to A' \to H^0_\UU(A') \to A'_{\UU[1]}[1].$$
We will proof the result by writing down a more explicit form of $A'_{\UU[1]}$.  Note that $A'_{\UU[1]} \cong (A'[-1])_{\UU}[1]$.  Since $A' \in \VV'$, Lemma \ref{lemma:WhenInVV} implies that $(A'[-1])_\UU \in \thick S$, thus $(A'[-1])_\UU$ has the following form:
$$(A'[-1])_\UU \cong \bigoplus_{i \in \bZ} V_i \otimes_{\End S} S[i],$$
where $V_i \in \mod (\End S)$.  Moreover, since $S \in \HH(\UU)$, we know that $S[i] \in \UU$ if and only if $i \geq 0$.  We may thus conclude that $V_i = 0$ when $i < 0$.  We determine $V_n$ (for $n \geq 0$) by considering
\begin{align*}
\Hom(S[n], A'[-1]) & \cong \Hom(S[n], (A'[-1])_\UU) \\
& \cong \Hom(S[n], \bigoplus_{i \in \bZ} V_i \otimes S[i]) \\
& \cong \bigoplus_{i \in \bZ} V_i \otimes \Hom(S[n], S[i]) \\
& \cong V_n,
\end{align*}
where we have used that $\Hom(S,S[i]) = 0$, for all $i \not= 0$.  We thus find
\begin{align*}
A'_{\UU[1]} & \cong (A'[-1])_{\UU}[1] \\
&\cong \bigoplus_{i \geq 0} \Hom(S[i], A'[-1]) \otimes S[i+1] \\
&\cong \bigoplus_{i \geq 0} \Hom(S[i+1], A') \otimes S[i+1] \\
&\cong \bigoplus_{i \geq 1} \Hom(S[i], A') \otimes S[i],
\end{align*}
which shows the required property.
\item By Lemma \ref{lemma:UUaisle}, we know that $T^*_S(A) \in \UU'$.  To show that $T^*_S(A) \in \HH(\UU')$, consider an object $X' \in \UU' \subseteq \UU$.  The functor $T^*_S: \Db \AA \to {}^\perp S$ is right adjoint to the embedding ${}^\perp S \to \Db \AA$, and thus $\Hom(X'[1], T^*_S(A)) \cong \Hom(X'[1], A)$.  We know that the latter is zero, and hence also the former.  This shows that $T^*_S(A) \in \HH(\UU')$.
\end{enumerate}
\end{proof}

\begin{lemma}\label{lemma:FurtherReduction}
Let $\AA$ be a hereditary category with Serre duality, and let $\UU \subseteq \Db \AA$ be a bounded aisle satisfying $\bS \UU \subseteq \UU$.  The following are equivalent:
\begin{enumerate}
  \item for all $A,B \in \HH(\UU)$ and all morphisms $f:A \to B[2]$ there is a $Z \in \HH(\UU)$ such that $f$ factors as $A \to Z[1] \to B[2]$,
  \item for all $A,B \in \HH(\UU)$ (satisfying additionally that $\Hom(A,S) = 0 = \Hom(S,B)$) and all morphisms $f:A \to B[2]$ there is a $Z \in \HH(\UU)$ such that $f$ factors as $A \to Z[1] \to B[2]$,
\end{enumerate}
\end{lemma}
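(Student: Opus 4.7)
The direction (1)$\Rightarrow$(2) is immediate, so I focus on (2)$\Rightarrow$(1). Setting $D = \End_{\HH(\UU)}(S)$, which is a skew field by Schur's lemma applied to the simple object $S$ from Proposition~\ref{proposition:SimpleTop}(\ref{Number5}), my plan is to induct on $d := \dim_D \Hom_{\HH(\UU)}(A,S) + \dim_D \Hom_{\HH(\UU)}(S,B)$. The base case $d = 0$ is exactly hypothesis (2). For the inductive step I would, by symmetry, assume $\Hom(A,S) \ne 0$; picking an epimorphism $A \twoheadrightarrow S$ with kernel $A_1$ gives a short exact sequence $0 \to A_1 \to A \to S \to 0$, and $\Ext^1_{\HH(\UU)}(S,S) = 0$ from Proposition~\ref{proposition:SimpleTop}(\ref{Number8}) yields $\dim_D \Hom(A_1,S) = \dim_D \Hom(A,S) - 1$, so the inductive hypothesis applies to $(A_1,B)$.

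The key auxiliary input I need is the natural isomorphism
\[
\Hom_{\Db \AA}(M_\UU, B[i]) \xrightarrow{\sim} \Hom_{\Db \AA}(S, B[i+1]), \qquad \tilde\alpha \mapsto \tilde\alpha[1] \circ \mu_0,
\]
valid for every $B \in \HH(\UU)$ and every $i \geq 1$, where $\mu_0 : S \to M_\UU[1]$ is the connecting morphism associated to the short exact sequence $0 \to M_\UU \to E \to S \to 0$ of Proposition~\ref{proposition:SimpleTop}(\ref{Number4},\ref{Number5}). This follows from the long exact sequence for $\Hom(-, B[i])$ together with $\Hom_{\Db \AA}(E, B[j]) = 0$ for $j \ne 0$ from Proposition~\ref{proposition:SimpleTop}(\ref{Number2}). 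Specialising to $i=1$ gives in particular that every morphism $\alpha : S \to B[2]$ factors through $M_\UU[1]$, with $M_\UU \in \HH(\UU)$.

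Given $f : A \to B[2]$, the inductive hypothesis supplies a factorisation $f|_{A_1} = h_1 \circ g_1$ with $g_1 : A_1 \to Z_1[1]$, $h_1 : Z_1[1] \to B[2]$, and $Z_1 \in \HH(\UU)$. Writing $\delta : S \to A_1[1]$ for the connecting morphism of the triangle $A_1 \to A \to S \to A_1[1]$, set $\eta := g_1[1] \circ \delta \in \Hom(S, Z_1[2])$. The auxiliary isomorphism (with $B$ replaced by $Z_1$) produces a unique $\tilde\eta \in \Hom(M_\UU, Z_1[1]) = \Ext^1_{\HH(\UU)}(M_\UU, Z_1)$ with $\eta = \tilde\eta[1] \circ \mu_0$, and my key construction is to let $Z \in \HH(\UU)$ be the extension of $M_\UU$ by $Z_1$ classified by $\tilde\eta$, yielding a triangle $Z_1 \to Z \to M_\UU \xrightarrow{\tilde\eta} Z_1[1]$. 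The obstruction to extending $g' := (Z_1[1] \to Z[1]) \circ g_1$ from $A_1$ to $A$ equals $(Z_1[2] \to Z[2]) \circ \eta$, which vanishes because two consecutive arrows of the shifted triangle $Z_1[1] \to Z[1] \to M_\UU[1] \xrightarrow{\tilde\eta[1]} Z_1[2]$ compose to zero; this produces $g : A \to Z[1]$. The obstruction to lifting $h_1$ through $Z_1[1] \to Z[1]$ is $h_1 \circ \tilde\eta \in \Hom(M_\UU, B[2])$; applying the auxiliary isomorphism one computes $(h_1 \circ \tilde\eta)[1] \circ \mu_0 = h_1[1] \circ \eta = \partial(h_1 \circ g_1) = \partial(f|_{A_1}) = 0$, where $\partial$ is the connecting homomorphism in the long exact sequence for $\Hom(-, B[2])$ and the final equality uses that $f|_{A_1}$ lifts to $A$; injectivity of the isomorphism then forces $h_1 \circ \tilde\eta = 0$, so $h_1$ lifts to $h : Z[1] \to B[2]$.

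By construction $h \circ g$ agrees with $f$ on $A_1$, so $h \circ g - f$ factors through $A \twoheadrightarrow S$ as $\alpha \circ \pi$ for some $\alpha : S \to B[2]$; the auxiliary statement at the end of the second paragraph lets me factor $\alpha$ through $M_\UU[1]$, and altogether $f$ factors through $(Z \oplus M_\UU)[1] \in \HH(\UU)[1]$, completing the induction. The dual case $\Hom(S,B) \ne 0$ is entirely parallel, using the cokernel $N := \bS E / S \in \HH(\UU)$ of the inclusion $S \hookrightarrow \bS E$ (which exists because $\Hom(S, \bS E) \ne 0$ by applying Proposition~\ref{proposition:SimpleTop}(\ref{Number7}) to $X = S$) together with the dual isomorphism $\Hom(A, N[i]) \cong \Hom(A, S[i+1])$ obtained from the $\HH(\UU)$-injectivity of $\bS E$ (Proposition~\ref{proposition:SimpleTop}(\ref{Number3})). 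The main obstacle I anticipate is arranging the construction of $Z$ so that the two obstructions, for extending $g_1$ and for lifting $h_1$, vanish \emph{simultaneously}; this is precisely what the auxiliary isomorphism accomplishes, by converting the triangulated class $\eta$ into a genuine abelian extension class $\tilde\eta \in \Ext^1_{\HH(\UU)}(M_\UU, Z_1)$ that can be realised as an object of the heart.
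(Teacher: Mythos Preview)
Your argument is correct, and it takes a genuinely different route from the paper's proof. The paper does not induct at all: it performs a single global reduction in two steps. First it replaces $B$ by the kernel $K_B$ of the co-evaluation map $B \to \Hom(B,I_S)^* \otimes I_S$ (with $I_S = \bS E$), which kills all of $\Hom(S,B)$ at once; then it replaces the resulting domain $M$ by the cokernel $Q_M$ of the evaluation map $\Hom(P_S,M)\otimes P_S \to M$ (with $P_S = E$), which kills all of $\Hom(-,S)$ at once. Each step uses the equivalences of Lemma~\ref{lemma:Ext2Splittings} and the $\HH(\UU)$-projectivity/injectivity of $E$ and $\bS E$ from Proposition~\ref{proposition:SimpleTop}, but the triangle manipulations with these (co)evaluation cones are somewhat intricate.

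Your approach instead peels off one copy of $S$ at a time via the induction on $\dim_D\Hom(A,S)+\dim_D\Hom(S,B)$, and the engine is the isomorphism $\Hom(M_\UU,-[i])\cong\Hom(S,-[i+1])$ coming from the short exact sequence $0\to M_\UU\to E\to S\to 0$ together with Proposition~\ref{proposition:SimpleTop}(\ref{Number2}). This isomorphism is exactly what lets you build the extension $Z$ of $M_\UU$ by $Z_1$ so that the two obstructions (to extending $g_1$ and to lifting $h_1$) vanish simultaneously; the paper never isolates this fact. What your approach buys is a more elementary and self-contained argument that avoids the evaluation/co-evaluation machinery; what the paper's approach buys is brevity (no induction) and a cleaner passage to the ``fully reduced'' objects $Q_M$ and $K_B$, which is convenient when one immediately passes to the perpendicular category ${}^\perp S$ as in Proposition~\ref{proposition:Reduction}.
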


\begin{proof}
We only need to show that the last statement implies the first.  Thus let $A,B \in \HH(\UU)$ and let $f:A \to B[2]$ be a morphism.  We will proceed in two steps.  In the first step, we will ``enlarge'' $A$ and reduce to where $\Hom(S,B) = 0$; in the second step, we will ``enlarge'' $B$ and reduce to where $\Hom(A,S) = 0$.

To ease notation, we will write $P_S$ and $I_S$ for the projective cover and injective envelope of $S$ in $\HH(\UU)$, respectively (see Proposition \ref{proposition:SimpleTop}, thus $P_S \cong E$ and $I_S \cong \bS E$).

For the first step, we follow Lemma \ref{lemma:Ext2Splittings} which states that the required factoring of $f$ would follow from the existence of an epimorphism $C \to A$ such that $C \to A \to B[2]$ is zero.  Consider the triangle
$$B \to \Hom(B,I_S)^* \otimes I_S \to C_B \to B[1]$$
in $\Db \AA$ built on the co-evaluation morphism $B \to \Hom(B,I_S)^* \otimes I_S$.  Applying the cohomological functor $H^0_\UU(-)$ gives the following exact sequence in $\HH(\UU)$:
$$0 \to K_B \to B \to \Hom(B,I_S)^* \otimes I_S \to Q_B \to 0.$$
We will also consider the triangle
$$K_B[1] \to C_B \to Q_B \to K_B[2]$$
in $\Db \AA$.  We will show that $\Hom(K_B, I_S) = 0$.  Using the lifting property of the injective $I_S$, any morphism $K_B \to I_S$ factors through the embedding $K_B \to B$.  Since the map $B \to \Hom(B,I_S)^* \otimes I_S$ is universal, the morphism $B \to I_S$ factors through $B \to \Hom(B,I_S)^* \otimes I_S$.  Thus the map $K_B \to I_S$ factors as
$$K_B \to B \to \Hom(B,I_S)^* \otimes I_S \to I_S,$$
which is zero.  By Proposition \ref{proposition:SimpleTop}(\ref{Number7}), we also know that $\Hom(S,K_B) = 0$.

Since $\Hom_{\Db \AA}(A,I_S[2]) = 0$ (by Proposition \ref{proposition:SimpleTop}(\ref{Number3})), we know that $f$ factors as $A \to C_B[1] \to B[2]$.  Consider the following morphism of triangles:
$$\xymatrix{
Q_B \ar[r] \ar@{=}[d] & M \ar[r] \ar[d] & A \ar[r] \ar[d] & Q_B[1] \ar@{=}[d] \\
Q_B \ar[r] & K_B[2] \ar[r] & C_B[1] \ar[r] & Q_B[1]
}$$
The topmost triangle corresponds to the short exact sequence $0 \to Q_B \to M \to A \to 0$ in $\HH(\UU)$ (see Remark \ref{remark:ExtInHeart}), so that the map $M \to A$ is an epimorphism in $\HH(\UU)$.

To find of an epimorphism $C \to A$ in $\HH(\UU)$ such that the composition $C \to A \to B[2]$ is zero, it thus suffices to find an epimorphism $C \to M$ in $\HH(\UU)$ such that $C \to M \to K_B[2]$ is zero.  The situation is now similar to the original setting (where the map $A \to B[2]$ has been replaced by a map $M \to K_B[2]$).  Recall, however, that $\Hom(S, K_B) = 0$.

Following Lemma \ref{lemma:Ext2Splittings}, finding an epimorphism $C \to M$ in $\HH(\UU)$ such that $C \to M \to K_B[2]$ is zero, is equivalent to finding a monomorphism $K_B \to L$ in $\HH(\UU)$ such that the composition $M \to K_B[2] \to L[2]$ is zero.

The second step of the proof is similar to the first step.  Consider the triangle
$$\Hom(P_S,M) \otimes P_S \to M \to C_M \to \Hom(P_S,M) \otimes P_S[1],$$
built on the evaluation morphism $\Hom(P_S,M) \otimes P_S \to M$, and the triangle
$$K_M[1] \to C_M \to Q_M \to K_M[2]$$
where $K_M \cong H^{-1}_\UU(C_M)$ and $Q_M \cong H^0_\UU(C_M)$.  Applying $H^0_{\UU}(-)$ to the first triangle yields the exact sequence
$$0 \to K_M \to \Hom(P_S,M) \otimes P_S \to M \to Q_M \to 0$$
in $\HH(\UU)$.  As before, we have $\Hom(P_S, Q_M)=0$ so that Proposition \ref{proposition:SimpleTop}(\ref{Number6}) implies that $\Hom(Q_M, S) = 0$.

Using that $\Hom_{\Db \AA}(P_S, K_B[2]) = 0$ (see Proposition \ref{proposition:SimpleTop}(\ref{Number2})), we see that the morphism $M \to K_B[2]$ factors through a morphism $C_M \to K_B[2]$.  Consider the following morphism of triangles:
$$\xymatrix{
K_M[1] \ar[r] \ar@{=}[d] & C_M \ar[r] \ar[d] & Q_M \ar[r] \ar[d] & K_M[2] \ar@{=}[d] \\
K_M[1] \ar[r] & K_B[2] \ar[r] & N[2] \ar[r] & K_M[2]
}$$
Here, the lower triangle corresponds to the short exact sequence $0 \to K_B \to N \to K_M \to 0$ in $\HH(\UU)$ (see Remark \ref{remark:ExtInHeart}), so that the map $K_B \to N$ is a monomorphism in $\HH(\UU)$.  We are looking for a monomorphism $K_B \to L$ in $\HH(\UU)$ such that the composition $M \to K_B[2] \to L[2]$ is zero, and for this it suffices that we find a monomorphism $N \to L$ such that $Q_M \to N[2] \to L[2]$ is zero.  Using Lemma \ref{lemma:Ext2Splittings} again, this is equivalent to showing that there is an object $X \in \HH(\UU)$ such that the map $Q_M \to N[2]$ factors as $Q_M \to X[1] \to N[2]$.

Recall that $\Hom(Q_M,S) = 0$.  We claim that $\Hom(S,N) = 0$.  In this case, the required factorization is then given by the assumptions in the statement of the lemma.

Applying $\Hom(S,-)$ to the short exact sequence $0 \to K_B \to N \to K_M \to 0$ in $\HH(\UU)$, shows that it is sufficient to prove that $\Hom(S,K_B) = 0 = \Hom(S,K_M)$.  We have already established that $\Hom(S,K_B) = 0$.  For the other equality, recall that there is a monomorphism $K_M \to \Hom(P_S,M) \otimes P_S$.  Seeking a contradiction, assume that $\Hom(S,K_M) \not= 0$, implying that $\Hom(S,P_S) \not= 0$.  There is then a nonzero composition $P_S \to S \to P_S$ which is invertible by Proposition \ref{proposition:HappelRingel}, yielding that $S \cong P_S$.  This implies that the composition $P_S \to K_M \to \Hom(P_S,M) \otimes P_S$ is a split monomorphism such that the composition $P_S \to \Hom(P_S,M) \otimes P_S \to M$ is zero, contradicting the universal property of the evaluation map (see Remark \ref{remark:Evaluation}).  This shows that $\Hom(S,K_M) = 0$ and finishes the proof.
\end{proof}

\begin{proposition}\label{proposition:Reduction}
Let $\Lambda$ be a finite-dimensional hereditary algebra, and let $(\UU, \VV)$ be a bounded $t$-structure in $\Db \mod \Lambda$.  Assume furthermore that $\bS\UU \subseteq \UU$.  Let $E \in \UU$ be an indecomposable $\UU$-projective object and let $S \in \HH(\UU)$ be the corresponding simple top (see Proposition \ref{proposition:SimpleTop}).  We write $\UU'$ for $\UU \cap {}^\perp S$.

If the aisle $\UU' \subseteq {}^\perp S$ induces a triangle equivalence $\Db \HH(\UU') \stackrel{\sim}{\rightarrow} {}^\perp S$, then the aisle $\UU \subseteq \Db \AA$ induces a triangle equivalence $\Db \HH(\UU) \stackrel{\sim}{\rightarrow} \Db \AA$.
\end{proposition}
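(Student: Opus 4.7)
The plan is to verify the criterion of Lemma~\ref{lemma:OnlyExt2}, which reduces matters to showing that every morphism $f : A \to B[2]$ in $\Db \mod \Lambda$ with $A, B \in \HH(\UU)$ factors as $A \to Z[1] \to B[2]$ for some $Z \in \HH(\UU)$. An application of Lemma~\ref{lemma:FurtherReduction} then allows me to restrict to the case $\Hom(A, S) = \Hom(S, B) = 0$; this assumption is tailored to make the simple top $S$ ``invisible'' from the point of view of $f$, and will be essential for the lifting step.

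Next, I would transport $f$ into ${}^\perp S$ via the right adjoint $T_S^* : \Db \mod \Lambda \to {}^\perp S$. By Lemma~\ref{lemma:RelatesHearts}(2), the objects $A' := T_S^*(A)$ and $B' := T_S^*(B)$ both lie in $\HH(\UU')$, and one obtains a morphism $f' := T_S^*(f) : A' \to B'[2]$ in ${}^\perp S$. Proposition~\ref{proposition:UUaisle} ensures that $(\UU', \VV')$ is a $t$-structure on ${}^\perp S$ closed under the Serre functor $\bS'$, so the hypothesis that $\UU'$ induces a derived equivalence can be combined with Theorem~\ref{theorem:BBD} and Lemma~\ref{lemma:DerivedEquivalence} inside ${}^\perp S$; this yields an object $Z' \in \HH(\UU')$ and a factorization $A' \to Z'[1] \to B'[2]$ of $f'$.

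The final task is to lift this factorization up to $\Db \mod \Lambda$. The plan is to set $Z := H^0_\UU(Z') \in \HH(\UU)$ and use the canonical morphism $Z' \to Z$ coming from the triangle of Lemma~\ref{lemma:RelatesHearts}(1), whose third term lies in $\thick(S)$. Combined with the twist triangles $T_S^*(A) \to A \to R_A \to T_S^*(A)[1]$ and $T_S^*(B) \to B \to R_B \to T_S^*(B)[1]$, the hypotheses $\Hom(A, S) = \Hom(S, B) = 0$ pin down the precise degrees in which $R_A$ and $R_B$ are concentrated (both vanish in degree zero), which, together with $\Hom_{\Db \AA}(S, S[n]) = 0$ for $n \neq 0$ from Proposition~\ref{proposition:SimpleTop}(\ref{Number8}), should force all the $\Hom$-obstructions to extending $A' \to Z[1]$ along $T_S^*(A) \to A$ to vanish, and ensure that the resulting composite $A \to Z[1] \to B[2]$ recovers $f$.

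The main obstacle will be this last diagram chase. An alternative that I would also try, and which may well be cleaner, is to work via the epimorphism formulation of Lemma~\ref{lemma:Ext2Splittings}: translate the factorization of $f'$ into an epimorphism $C' \to A'$ in $\HH(\UU')$ killing $f'$, then construct $C \in \HH(\UU)$ as a suitable extension of $C'$ by finitely many copies of $S$ (the extension data being read off from the triangle of Lemma~\ref{lemma:RelatesHearts}(1) applied to $A'$), and verify directly that $C \to A$ is an epimorphism in $\HH(\UU)$ with vanishing composition to $B[2]$, invoking Proposition~\ref{proposition:SimpleTop}(\ref{Number6}) to promote the vanishing on $C'$ to a vanishing on $C$.
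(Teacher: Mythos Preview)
Your overall strategy matches the paper exactly: reduce via Lemma~\ref{lemma:OnlyExt2}, then Lemma~\ref{lemma:FurtherReduction}, transport to ${}^\perp S$ by $T_S^*$, invoke the hypothesis there, and lift back.  The divergence is only in the final lifting.

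Your \emph{alternative} plan is precisely what the paper does, and it is the one that works cleanly.  The paper's concrete choice is $C = H^0_\UU(C')$; the triangle of Lemma~\ref{lemma:RelatesHearts}(1) for $C'$ shows this differs from $C'$ by copies of $S$ in positive degrees, which is the ``extension by copies of $S$'' you allude to.  With this $C$, verifying that $C \to A \to B[2]$ is zero is a short diagram chase: one uses that the composite $C' \to T_S^*(B)[2]$ is zero, that $\Hom(S[i],B[2]) = 0$ for $i \geq 3$ (heart condition) and $\Hom(S[2],B[2]) \cong \Hom(S,B) = 0$ (hypothesis).  The substantial step, which you underestimate, is showing that $C \to A$ is an \emph{epimorphism} in $\HH(\UU)$.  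Proposition~\ref{proposition:SimpleTop}(\ref{Number6}) is not the right tool here; rather, the paper tests against an arbitrary nonzero $h:A \to H$ in $\HH(\UU)$ and argues that the composite $T_S^*(A) \to A \xrightarrow{h} H$ is already nonzero (using that $\RHom(A,S)^* \stackrel{L}{\otimes} S$ is concentrated in strictly positive degrees, by $\Hom(A,S)=0$, and so cannot map nontrivially to $H\in\HH(\UU)$), whence $C' \to T_S^*(A) \to T_S^*(H)$ is nonzero since $C' \to T_S^*(A)$ is epi in $\HH(\UU')$, and finally $C \to H$ is nonzero.

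Your \emph{primary} plan (lift the factorization $A' \to Z'[1] \to B'[2]$ directly) runs into a genuine obstruction: to extend $A' \to Z[1]$ along $T_S^*(A) \to A$ you need the connecting map $R_A[-1] \to A' \to Z[1]$ to vanish, and $R_A[-1]$ has summands $S[0]$ and $S[1]$ (coming from $\Hom(A,S[1])$ and $\Hom(A,S[2])$) for which there is no a~priori reason the composite into $Z[1]$ dies.  One can eventually salvage this by absorbing the discrepancy into extra copies of $S$ in $Z$, but at that point the epimorphism route is both shorter and more transparent.
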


\begin{proof}
By Lemma \ref{lemma:OnlyExt2}, we only need to check that every morphism $A \to B[2]$ (for $A,B \in \HH(\UU)$) factors as $A \to Z[1] \to B[2]$ (for some $Z \in \HH(\UU)$), and by Lemma \ref{lemma:Ext2Splittings}, this is equivalent to finding an epimorphism $C \to A$ in $\HH(\UU)$ such that $C \to A \to B[2]$ is zero.  By Lemma \ref{lemma:FurtherReduction}, we may furthermore assume that $\Hom(A,S) = 0 = \Hom(S,B)$. 

By Lemma \ref{lemma:RelatesHearts}, we know that $T^*_S(A), T_S^*(B) \in \HH(\UU')$, and thus we may assume (using Theorem \ref{theorem:BBD}) that there is an epimorphism $C' \to T^*_S(A)$ in $\HH(\UU')$ such that the composition $C' \to T^*_S(A) \to T^*_S(B)[2]$ is zero.  We get the following commutative diagram where the rows are triangles:
$$\xymatrix{C' \ar[r] \ar[d] & H^0_{\UU} (C') \ar[r] \ar[d] & \bigoplus_{i \geq 2} \Hom(S[i-1],C') \otimes S[i] \ar[r] \ar[d] & C'[1] \ar[d] \\
T_S^*(A) \ar[r] \ar[d]_{T_S^* f} & A \ar[r] \ar[d]_{f} & \RHom(A,S)^* \stackrel{L}{\otimes} S \ar[r] \ar[d]&  T_S^*(A)[1] \ar[d] \\
T_S^*(B)[2] \ar[r] & B[2] \ar[r] & \RHom(B[2],S)^* \stackrel{L}{\otimes} S \ar[r] &  T_S^*(B)[3]
}$$
where we have used Lemma \ref{lemma:RelatesHearts} to determine the topmost triangle.  We claim that $H^0_\UU (C') \to A$ is the required epimorphism $C \to A$ mentioned in the beginning of the proof.  We will write $C$ for $H^0_\UU (C')$.

First, we will show that $C \to A \to B[2]$ is zero.  The commutative diagram given above, yields the diagram
$$\xymatrix{C' \ar[r] \ar[d]_{0} & C \ar[r] \ar[d] & \bigoplus_{i \geq 2} \Hom(S[i-1],C') \otimes S[i] \ar[r] \ar[d] & C'[1] \ar[d] \\
T_S^*(B)[2] \ar[r] & B[2] \ar[r] & \RHom(B[2],S)^* \stackrel{L}{\otimes} S \ar[r] &  T_S^*(B)[3]
}$$
This implies that $C' \to C \to B[2]$ is zero, and hence the morphism $C \to B[2]$ factors as
$$C \to \bigoplus_{i \geq 2} \Hom(S[i-1],C') \otimes S[i] \stackrel{g}{\rightarrow} B[2].$$
Since $B \in \HH(\UU)$ and $S \in \UU$, we know that $\Hom(S[i], B[2]) = 0$ for $i \geq 3$.  Since we have assumed that $\Hom(S,B) = 0$, we know that $g$ is zero and thus so is the morphism $C \to B[2]$.

Next, we show that $C \to A$ is an epimorphism in $\HH(\UU)$.  For this, let $h:A \to H$ be a nonzero map in $\HH(\UU)$.  There is the following morphism of triangles:
$$\xymatrix{C' \ar[r] \ar[d] & C \ar[r] \ar[d] & \bigoplus_{i \geq 2} \Hom(S[i-1],C') \otimes S[i] \ar[r] \ar[d] & C'[1] \ar[d] \\
T_S^*(A) \ar[r] \ar[d]_{T_S^* h} & A \ar[r] \ar[d]_{h} & \RHom(A,S)^* \stackrel{L}{\otimes} S \ar[r] \ar[d]&  T_S^*(A)[1] \ar[d] \\
T_S^*(H) \ar[r] & H \ar[r] & \RHom(H,S)^* \stackrel{L}{\otimes} S \ar[r] &  T_S^*(H)[1]
}$$

Seeking a contradiction, we will assume that the composition $T_S^*(A) \to A \to H$ is zero.  In this case, $h:A \to H$ would factor as
$$A \to \RHom(A,S)^* \stackrel{L}{\otimes} S \to H.$$
Recall that $\RHom(A,S)^* \stackrel{L}{\otimes} S \cong \oplus_i \Hom(A,S[i])^* \otimes S[i]$.  Since $A \in \HH(\UU) \subseteq \UU$ and $S \in \HH(\UU)$, we know that $\Hom(A,S[i]) = 0$ for $i < 0$.  Furthermore, we have assumed that $\Hom(A,S) = 0$.  Combined, this shows that
$$\RHom(A,S)^* \stackrel{L}{\otimes} S \cong \oplus_{i > 0} \Hom(A,S[i])^* \otimes S[i].$$
However, since $S \in \UU$ and $H \in \HH(\UU)$, we know that $\Hom(S[i], H) = 0$ for $i > 0$, and thus we infer that the map $h: A \to H$ is zero.  This is a contradiction, hence we may assume that the composition $T_S^*(A) \to A \to H$ is nonzero.  We may also infer that the composition $T_S^*(A) \to T_S^*(H) \to H$ is nonzero, and since $C' \to T_S^*(A)$ is an epimorphism in $\HH(\UU')$, we find that the composition $C' \to T_S^*(A) \to T_S^*(H)$ is nonzero (here we have used Lemma \ref{lemma:RelatesHearts} to see that $T_S^*(H) \in \HH(\UU')$).

Using that $C' \in {}^\perp S$, we find that the composition $C' \to T_S^*(A) \to T_S^*(H) \to H$ is nonzero.  Hence the composition $C' \to C \to A \to H$ is nonzero, and thus so is $C \to A \to H$.  We conclude that $C \to A$ is indeed an epimorphism.

This finishes the proof.
\end{proof}
\section{Proof of the main theorem}\label{section:Proof}

We are now ready to prove the main theorem (Theorem \ref{theorem:MainTheorem} below).  Let $(\UU, \VV)$ be a $t$-structure on a triangulated category $\CC$ with Serre duality.  We showed in Corollary \ref{corollary:Needed} that if $(\UU, \VV)$ induces a triangle equivalence $\Db \HH(\UU) \to \CC$, then $(\UU, \VV)$ is bounded and $\bS \UU \subseteq \UU$.  In this section, we show the converse to this statement when $\CC$ is $\Db \mod \Lambda$ for a finite-dimensional hereditary algebra $\Lambda$.

\begin{theorem}\label{theorem:MainTheorem}
Let $\Lambda$ be a finite-dimensional hereditary algebra, and let $\bS$ be the Serre functor in $\Db \mod \Lambda$.  A $t$-structure $(\UU, \VV)$ on $\Db \mod \Lambda$ induces a triangle equivalence $\Db \HH(\UU) \stackrel{\sim}{\rightarrow} \Db \mod \Lambda$ if and only if $(\UU, \VV)$ is bounded and $\bS \UU \subseteq \UU$.
\end{theorem}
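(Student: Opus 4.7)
The ``only if'' direction is already covered by Corollary \ref{corollary:Needed}. The plan for the ``if'' direction is to assume that $(\UU, \VV)$ is bounded and $\bS \UU \subseteq \UU$, and then prove that $\UU$ induces a triangulated equivalence $\Db \HH(\UU) \to \Db \mod \Lambda$ by induction on the number $n$ of isomorphism classes of simple $\Lambda$-modules. The base case $n = 0$ is vacuous, so all the genuine work lies in the inductive step.

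For the inductive step I would split into two cases according to whether $\UU$ contains a nonzero $\UU$-projective object. If $\UU$ has no nonzero $\UU$-projectives, Corollary \ref{corollary:NoProjectivesInAisle} is precisely the required statement and nothing further needs to be done. Otherwise, I would pick an indecomposable $\UU$-projective $E$ and apply Proposition \ref{proposition:SimpleTop} to obtain its simple top $S = S_E \in \HH(\UU)$. Since $S$ is simple in the abelian category $\HH(\UU)$, Schur's Lemma yields that $\End S$ is a skew field, and Proposition \ref{proposition:SimpleTop} also supplies $\Hom(S,S[i]) = 0$ for every $i \neq 0$. Because $\mod \Lambda$ is hereditary, $S$ is a shift of an exceptional $\Lambda$-module, so the Happel-Rickard-Schofield theorem recalled in \S\ref{subsection:Perpendicular} identifies ${}^\perp S$ with $\Db \mod \Lambda'$ for a finite-dimensional hereditary algebra $\Lambda'$ having exactly $n - 1$ simples.

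I would then set $\UU' = \UU \cap {}^\perp S$. Proposition \ref{proposition:UUaisle} already supplies that $\UU'$ is an aisle in ${}^\perp S$ with $\bS' \UU' \subseteq \UU'$, where $\bS'$ is the Serre functor on ${}^\perp S$. The remaining verification in this step is that the $t$-structure $(\UU', \VV')$ on ${}^\perp S$ is bounded, which reduces to a short direct check: given $X \in {}^\perp S$, boundedness of $\UU$ forces $X \in \UU[n] \cap {}^\perp S = \UU'[n]$ for large enough $n$, and a dual estimate (using $\UU' \subseteq \UU$ to pull back vanishing of $\Hom$) places $X \in \VV'[m]$ for sufficiently negative $m$. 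With boundedness in hand, the inductive hypothesis applies to $\UU' \subseteq \Db \mod \Lambda'$ and yields a triangulated equivalence $\Db \HH(\UU') \to {}^\perp S$. Proposition \ref{proposition:Reduction} then transfers this equivalence back to the desired $\Db \HH(\UU) \to \Db \mod \Lambda$, closing the induction.

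The conceptually hard steps, namely the construction of the auxiliary finitely generated aisle $\YY$ used in the no-Ext-projectives case and the delicate epimorphism-lifting argument underpinning the reduction by a simple top, have already been carried out in \S\ref{section:NoExtProjectives} and \S\ref{section:Reduction}. The main residual concern in this final section is therefore largely notational: confirming that the induction genuinely decreases $n$ (which is immediate from the HRS identification of ${}^\perp S$) and that boundedness survives the passage from $\Db \mod \Lambda$ to ${}^\perp S$ without any hidden subtlety. Neither should present a serious obstacle.
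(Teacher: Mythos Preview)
Your proposal is correct and follows essentially the same approach as the paper's proof: both invoke Corollary~\ref{corollary:Needed} for the ``only if'' direction, use Corollary~\ref{corollary:NoProjectivesInAisle} when $\UU$ has no nonzero Ext-projectives, and otherwise reduce via the simple top $S_E$ and Proposition~\ref{proposition:Reduction} to a hereditary algebra with one fewer simple. The only organizational difference is that the paper phrases the reduction as an iteration terminating once no Ext-projectives remain, whereas you package it as a formal induction on the number of simples; your explicit verification that $(\UU',\VV')$ is bounded is a detail the paper leaves implicit.
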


\begin{proof}
That the condition $\bS \UU \subseteq \UU$ is required, has been shown in Corollary \ref{corollary:Needed}.  For the other direction, let $(\UU, \VV)$ be a bounded $t$-structure on $\Db \mod \Lambda$.  If $\UU$ has no nonzero $\UU$-projective objects, then the statement follows from Corollary \ref{corollary:NoProjectivesInAisle}.

If $\UU$ does have nonzero $\UU$-projectives, then we will follow the strategy of \S\ref{section:Reduction}.  Let $P_0 \in \UU$ be an indecomposable $\UU$-projective and let $S_0 \in \HH(\UU)$ be the associated simple top.  By Proposition \ref{proposition:HappelRickardSchofield}, we know that ${}^\perp S_0$ is equivalent to $\Db \mod \Lambda'$, for some finite-dimensional hereditary algebra $\Lambda'$ with 1 fewer distinct simple module (thus $\mod \Lambda'$ has one fewer isomorphism class of simple objects than $\mod \Lambda$ has).

We can thus iterate this procedure, finding a sequence $S_0, S_1, \ldots, S_n$ of exceptional objects such that $$\UU_{n+1} = \UU \cap {}^\perp \{S_0, S_1, \ldots, S_n\}$$
has no nonzero $\UU_{n+1}$-projective objects.  Applying Proposition \ref{proposition:Reduction} $n+1$ times then yields the required result.
\end{proof}

\begin{example}
Let $\Lambda$ be a finite-dimensional hereditary algebra, and let $(\TT, \FF)$ be a torsion theory on $\mod \Lambda$.  Let $\BB$ be the tilting of $\mod \Lambda$ with respect to this torsion pair.  The natural embedding $\BB \to \Db \mod \Lambda$ induces a derived equivalence if and only if for every projective object $P \in \TT$, the corresponding injective $P \otimes_\Lambda \Lambda^*$ is also contained in $\TT$.

In particular, if $\TT$ has no nonzero projective objects (and thus the torsion theory is cotilting) or if $\TT$ has all injective objects (and thus the torsion theory is tilting), then $\BB$ is derived equivalent to $\mod \Lambda$.
\end{example}

\begin{example}
Let $\bX$ be a weighted projective line of domestic type (thus the category $\coh \bX$ is derived equivalent to $\mod \Lambda$ for a finite-dimensional hereditary algebra $\Lambda$).  Tilting with respect to any torsion pair $(\TT, \FF)$ on $\coh \bX$ induces a derived equivalence.
\end{example}

For a general (algebraic) triangulated category $\CC$, a bounded $t$-structure $(\UU, \VV)$ satisfying the condition $\bS \UU \subseteq \UU$ does not necessarily induce a triangle equivalence $\Db \HH(\UU) \to \CC$, as the following examples illustrate.

\begin{example}\label{example:NotInverse}
Let $\AA$ be the category $\grmod k[x]$ of finitely generated $\bZ$-graded $k[x]$-modules.  It is well-known that $\AA$ is hereditary and that the Serre functor on $\Db \AA$ is given by $\bS X \cong X(-1)[1]$.

For any $i \in \bZ$, We consider the full subcategories $\AA_{\leq i}$ and $\AA_{\geq i}$ of $\AA$ given by
\begin{eqnarray*}
\Ob \AA_{\leq i} &=& \{X \in \AA \mid \mbox{$X_j = 0$, for all $j > i$}\}, \\
\Ob \AA_{\geq i} &=& \{X \in \AA \mid \mbox{$X_j = 0$, for all $j < i$}\}.
\end{eqnarray*}
Note that the embedding $\AA_{\leq i} \to \AA$ has a right adjoint and that the embedding $\AA_{\geq i} \to \AA$ has a left adjoint; both adjoints are given by truncations.

We consider the following $t$-structure on $\Db \AA$:
\begin{eqnarray*}
\Ob D^{\leq 0} &=& \{X \in \Db \AA \mid H^{-i} X \in \AA_{\geq i}\}, \\
\Ob D^{\geq 0} &=& \{X \in \Db \AA \mid H^{-i} X \in \AA_{\leq i}\}.
\end{eqnarray*}
It is easily checked that $D^{\leq 0}$ is a preaisle, and it follows from \cite[Theorems 1.1 and 1.2]{StanleyvanRoomalen12} that $(D^{\leq 0}, D^{\geq 0})$ is a $t$-structure.

Furthermore, the heart $\HH = D^{\leq 0} \cap D^{\geq 0}$ can be described as follows: an object $X \in \Db \AA$ lies in $\HH$ if and only if $H^{-i}(X) \in \AA_{\geq i} \cap \AA_{\leq i}$, thus if $H^{-i} X$ is concentrated in degree $i$.  This means that the heart is the additive closure of $\bigcup_{i} \{k(i)[i]\}$ in $\Db \AA$, or thus equivalent to $\grmod k$; in particular, the heart is a semi-simple category and is thus not derived equivalent to $\AA$.
\end{example}

\begin{example}\label{example:NotInverse1}
Let $Q$ be the quiver $A_\infty$ with zig-zag orientation, thus $Q$ is the quiver:
$$\cdot \rightarrow \cdot \leftarrow \cdot \rightarrow \cdot \leftarrow \cdot \rightarrow \cdots$$
It is well-known that the category $\rep Q$ of finite-dimensional representations of $Q$ has Serre duality.  It follows from \cite{Keller05} that the orbit category $\DD \cong \Db \rep Q / (\bS [-2])$ is a triangulated category which admits a Serre functor $\bS_\DD \cong [2]$.  This orbit category $\DD$ has been discussed in \cite{HolmJorgensen12}.  Note since $\bS_\DD \cong [2]$, we have $\bS_\DD \UU \subseteq \UU$ for every aisle $\UU \subseteq \DD$.

The $t$-structures in $\DD$ have been classified in \cite[Theorem 4.1]{Ng10}, and it follows from that classification that for every bounded aisle $\UU \subseteq \DD$, the heart $\HH(\UU) \cong \mod k$, and hence $\DD \not\cong \Db \HH(\UU)$.
\end{example}

\begin{example}\label{example:NotInverse2}
Let $A$ be the dg-algebra $k[t]$ with zero differential and $\deg t = -n$ for some $n \in \bZ$.  We will write $A^e = A \otimes_k A^{op}$, thus $A^e \cong k[s,t]$ with zero differential and $\deg t = \deg s = -n$.

The dg-algebra $A$ is homologically smooth, meaning that, as an $A^e$-module, it has a finite resolution by finitely generated projective objects.  Here, such a resolution is given by:
$$\xymatrix@1{ \cdots \ar[r]& 0  \ar[r] & k[s,t](n) \ar[r]^-{s-t} & k[s,t] \ar[r] & 0 \ar[r] & \cdots}$$
where $k[s,t](n)$ is the dg-algebra $k[s,t]$ shifted by degree $n$ so that multiplication by $s-t$ is a degree zero morphism.

We will write $\Theta_A$ for a cofibrant replacement (as dg $A^e$-modules) of $\RHom_{A^e}(A,A^e)$, thus the associated complex is
$$\xymatrix@1{ \cdots \ar[r]& 0  \ar[r] & k[s,t] \ar[r]^-{s-t} & k[s,t](-n) \ar[r] & 0 \ar[r] & \cdots}$$
and we obtain the dg $A^e$-module $\Theta_A$ by taking the total complex of the associated bicomplex.  It follows from \cite[Lemma 3.4]{Keller11} that the derived category $D_\text{fd}(A)$ of finite-dimensional right dg-modules is $(n+1)$-Calabi-Yau, thus the Serre functor $\bS: D_\text{fd}(A) \to D_\text{fd}(A)$ is given by $[n+1]$.

From now on, assume that $n > 0$, thus $A$ is a nonpositively graded dg-algebra with finite-dimensional cohomologies.  Since $\bS \cong [n+1]$, every aisle in $D_\text{fd}(A)$ is closed under the Serre functor.

Let $(\UU, \VV)$ be the standard $t$-structure on $D_\text{fd}(A)$, thus $M \in \UU$ if and only if $H^{i}(M) = 0$ for all $i > 0$.  We remarked before that every aisle in $D_\text{fd}(A)$ is closed under the Serre functor, thus $\bS \UU \subseteq \UU$.  Following \cite[\S4.1]{KonigYang14}, we have $\HH(\UU) \cong \mod k$, which shows that $\Db \HH(\UU) \not\cong D_\text{fd}(A)$.
\end{example}

\begin{remark}
The category $D_\text{fd}(A)$ from Example \ref{example:NotInverse2} has been described in more detail in \cite[Section 8]{Jorgensen04}.
\end{remark}

\def\cprime{$'$}
\providecommand{\bysame}{\leavevmode\hbox to3em{\hrulefill}\thinspace}
\providecommand{\MR}{\relax\ifhmode\unskip\space\fi MR }
% \MRhref is called by the amsart/book/proc definition of \MR.
\providecommand{\MRhref}[2]{%
  \href{http://www.ams.org/mathscinet-getitem?mr=#1}{#2}
}
\providecommand{\href}[2]{#2}

\end{document}